\documentclass{amsart}
\usepackage{amsmath}
\usepackage{newtxmath}
\usepackage{tikz-cd}
\usepackage{fix-cm}
\usepackage{hyperref}

\newtheorem{thm}{Theorem}[section]
\newtheorem{defn}[thm]{Definition}
\newtheorem{prop}[thm]{Proposition}
\newtheorem{lem}[thm]{Lemma}
\newtheorem{cor}[thm]{Corollary}
\newtheorem{rmk}[thm]{Remark}

\DeclareMathOperator{\arcch}{arccosh}

\DeclareMathOperator{\interior}{int}
\DeclareMathOperator{\image}{Im}
\DeclareMathOperator{\hess}{Hess}
\DeclareMathOperator{\area}{Area}

\begin{document}

\title[The existence of inversive distance circle packing]
{The existence of inversive distance circle packing on polyhedral surface}
\author{Xiang Zhu}
\address{Department of mathematics, Shanghai University, Shanghai, China, 200444.}
\email{zhux@shu.edu.cn}

\begin{abstract}
    We prove that for any discrete curvature satisfying Gauss-Bonnet formula, there exist a unique up to scaling inversive distance circle packing in the discrete conformal equivalent class, whose polyhedral metric meets the target curvature. We prove it by constructing diffeomorphism between fiber bundles with cell decomposition based on Teichm\"uller spaces, and each discrete conformal equivalent class is a fiber passing through finite cell with respect to triangulations, which means we can do surgery on the discrete Ricci flow by edge flipping using a generalized Ptolemy equation to ensure it converge and never blow up.
\end{abstract}

\maketitle

\section{Introduction}

\subsection{Statement of results}

The \emph{Inversive distance} is the generalization to the cosine of the intersection angle of two circles. The \emph{Inversive distance circle packing} is a polyhedral surface with disjoint circles (or cones precisely) centered at vertices (or cone points). As we know that the conformal mapping on Riemannian surface keeps the infinitesimal circle, given a (geometric) triangulation on this surface, if we adjust the radii of circles while fixing all the inversive distance with respect to edges, then that is an analog or discretization of a conformal map. A \emph{discrete curvature} is the difference of angles between cone and plane.

In author's thesis \cite{zhu2019discrete}, we proved the main theorem.
\begin{thm}
    Given an inversive distance circle packing, for any target discrete curvature satisfying Gauss-Bonnet formula, there exist a unique one up to scaling which is discrete conformal to the initial one.
\end{thm}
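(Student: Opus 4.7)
My plan is to reformulate the statement as the inversion of a ``curvature map'' on a single smooth manifold that globalizes the discrete conformal class across all compatible triangulations.

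First, I would fix a triangulation $T$ of the surface and work in the classical variational framework. Parametrize inversive distance packings by the log-radii $u = (u_i)_{i \in V}$; on the open set $\Omega_T \subset \mathbb{R}^V$ where every face of $T$ has all three edge lengths satisfying the triangle inequality, the discrete Ricci energy $E_T(u)$ whose gradient is $K(u) - \bar K$ is well-defined, and a Bowers--Stephenson--Guo--Luo style Hessian computation shows that it is strictly convex modulo the scaling direction $(1,\dots,1)$. On $\Omega_T$, existence and uniqueness would then reduce to surjectivity of $\nabla E_T$ onto the admissible curvature affine space $\mathcal{K} = \{\bar K : \sum_i \bar K_i = 2\pi\chi\}$. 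The fundamental obstruction is that $\Omega_T$ is \emph{not} all of $\mathbb{R}^V / \mathbb{R}$: as $u$ approaches $\partial \Omega_T$ a triangle inequality fails and the packing becomes geometrically meaningless, so the flow $\dot u = -(K-\bar K)$ may exit $\Omega_T$ in finite time.

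Next I would implement the surgery announced in the abstract. When a triangle of $T$ degenerates, I replace $T$ by the triangulation $T'$ obtained from the flip of the offending edge, and I define the inversive distance on the new diagonal by the generalized Ptolemy equation. The critical technical step is to show that under this prescription the cells $\Omega_T$ and $\Omega_{T'}$ glue along their common flip-wall in a $C^1$ (indeed smooth) manner, so that the union $\Omega = \bigcup_T \Omega_T / \sim$ is a smooth manifold cellulated by the $\Omega_T$. The discrete conformal class of the given initial packing is then a single leaf in $\Omega$, and one checks that this leaf intersects only finitely many cells because the combinatorics of geometric triangulations compatible with a fixed conformal structure are finite. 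Packaging this over the moduli of the underlying polyhedral surface yields two smooth fiber bundles (``radii bundle'' and ``curvatures bundle'') over Teichm\"uller space, and the curvature map intertwines them.

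Finally I would prove that on each conformal leaf the curvature map is a diffeomorphism onto $\mathcal{K}$. Existence is obtained by running the discrete Ricci flow on the leaf, performing a flip whenever the trajectory hits a flip-wall; because the flow is $C^1$ across walls and $E - \langle \bar K, u\rangle$ decreases monotonically, the flow is globally defined and properness of the energy along the leaf (no degeneration to cusps, since only finitely many cells are visited) forces convergence to a critical point. Uniqueness on the leaf follows from strict convexity, extended across walls via the $C^1$ gluing. The main obstacle I foresee is precisely this last gluing step: proving that the generalized Ptolemy equation makes the Hessian of the Ricci energy extend \emph{continuously} across flip-walls, so that global strict convexity modulo scaling holds on the whole leaf rather than only inside each $\Omega_T$; without this, the ``no blow-up'' clause for the surgered Ricci flow cannot be rigorously closed.
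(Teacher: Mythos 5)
Your overall strategy---per-triangulation convexity of the Ricci energy in log-radii, surgery by edge flips governed by the generalized Ptolemy equation, $C^1$ gluing of the cells along flip-walls, finiteness of the cells met by a conformal leaf, and existence plus uniqueness via a convex potential on the leaf---is the same as the paper's (Sections \ref{sec:teich}--\ref{sec:proof}), and you correctly single out the $C^1$ gluing of the Hessian across walls as the key technical point; the paper establishes exactly this in Lemma \ref{dfdF} by showing $dF=df$ on the wall where the weighted Delaunay inequality degenerates to an equality, and then deduces that the Ricci potential is globally $C^2$ and strictly convex modulo scaling.

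The genuine gap is in your justification of properness. You assert that the energy is proper along the leaf because ``only finitely many cells are visited,'' but finiteness of the cell decomposition does not by itself prevent the curvature from remaining in the interior of the admissible set $K=\{\,\bar{\mathbf K}\in(-\infty,2\pi)^V \mid \sum_i \bar K_i=2\pi\chi(S)\,\}$ while $\mathbf{u}$ escapes to infinity; it only lets you pass, by pigeonhole, to a subsequence lying in a single cell. What is actually needed---and what the paper supplies in the proof of Theorem \ref{main_f}---is a degeneration analysis: for a sequence $\mathbf{u}^{(m)}$ leaving every compact subset of the hyperplane $U$, split $V$ into \emph{good} vertices (those with $u_i^{(m)}\to-\infty$) and \emph{bad} vertices, use $I_{ij}>1$ to show that no triangle of the limiting cell's triangulation can have exactly one good vertex opposite an edge joining two bad vertices (otherwise $l_{ij}^{(m)}+l_{ik}^{(m)}>l_{jk}^{(m)}$ fails in the limit), deduce that some bad vertex has only good neighbors, and conclude that the discrete curvature there tends to $2\pi$, i.e.\ $\kappa_d(\mathbf{u}^{(m)})\to\partial K$. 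Together with invariance of domain this gives $\image\kappa_d=K$. Without this argument (or an equivalent properness statement), the surjectivity of the curvature map onto $K$---equivalently the ``no blow-up'' clause for the surgered Ricci flow---is not closed. The rest of your outline is sound and matches the paper.
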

Here the definition of discrete conformal only allows radius changing on the weighted Delaunay triangulation. If the weighted Delaunay condition is about to break, we have a generalized Ptolemy equation for the triangulation to do surgery of edge flipping. This theorem will be claimed precisely again after notations in Theorem \ref{main_f}. This paper is part of translation of \cite{zhu2019discrete} which is not written in English.

\subsection{Related work}

Circle packing is a concept in which a pattern of circles, satisfying certain adjacency relationships such as intersection, tangency, or separation, is arranged on a surface. Different papers may have different requirements for circle packing. For example, Koebe propo along the adjacent edges of a triangular mesh in \cite{koebe1936kontaktprobleme}sed a family of circles on a sphere such that they tangent to each other.

In Thurston's lecture notes \cite{thurston1979geometry}, specifically in Chapter 13, a theorem relating circle packing to discrete curvature was presented in order to prove Andreev's theorem. The theorem states that given a triangulation on a surface, with angle values assigned to each edge, if the curvatures at the vertices satisfy certain linear inequalities, then there exists a circle packing on the surface where the centers of circle coincide with the vertices and the angles between the circles connected by edges are prescribed. It is also possible to set the angle values to zero, corresponding to the circles being tangent along each edge. Several papers \cite{chow2003combinatorial} \cite{marden1990on} and \cite{stephenson2005introduction} have subsequently provided different proofs of Thurston's circle packing theorem. Chow and Luo, inspired by the continuous case of Ricci flow, defined the discrete Ricci flow and used it to prove Thurston's theorem. Similar work has also been done in \cite{bobenko2004variational}. Due to the idea of discretize the continuous model, various models of discrete Ricci flow have emerged. Zhang summarized 18 models of discrete Ricci flow and analyzed their differential properties \cite{zhang2014unified}. Two of these models are of particular interest in this paper.

The first model is the vertex scaling model, which corresponds to the discrete Yamabe flow. In this flow, the cross-ratios of the edge lengths of triangles on both sides of each edge are fixed. Bobenko et al. proved the global rigidity using Schl"afli's formula for hyperbolic volumes in \cite{bobenko2015discrete}, while Gu, Luo, Sun, and Wu used the Ptolemy formula to prove the existence of the discrete Yamabe flow by changing the triangulation, that is, for any initial polyhedral metric with triangulation and target discrete curvatures satisfying the Gauss-Bonnet formula, the discrete Yamabe flow must converge to a target polyhedral metric with triangulation that is conformally equivalent but not necessarily isotopy to the initial mesh, achieving the desired target discrete curvatures \cite{gu2018discrete}\cite{gu2018discreteII}. It is difficult to verify whether the target curvatures meet the requirements or not, and it is also hard to deal with cases that do not meet the requirements in engineering applications, then the existence for only one restriction on target curvature is easy to use. 
Sun et al. conducted various numerical experiments to demonstrate the efficiency and robustness of the discrete conformal mapping algorithm induced by the vertex scaling model \cite{sun2015discrete}. The works of Penner \cite{penner1987the} and Rivin \cite{rivin1994euclidean} on decorated Teichm\"uller spaces and their cell decomposition played a crucial role in the proofs. 

The second model is the inversive distance circle packing, which generally requires the inversive distance to be greater than $1$ and is used to measure the relationship between two disjoint circles. In a sense, this model can be seen as a generalization of the previous model, with the vertex scaling model approaching the inversive distance model as the inversive distance tends to infinity. Initially mentioned by Bowers and Stephenson in 1997 \cite{bowers2004uniformizing}, a detailed explanation was later provided by Bowers et al. \cite{bowers2003planar}. Guo proved the local rigidity of inversive distance circle packing \cite{guo2011local}, while Luo proved the global rigidity \cite{luo2011rigidity}. Related work has also been done by Springborn et al. \cite{springborn2008variational} and Ge et al. \cite{ge2017deformation}, \cite{ge2017deformationII} and \cite{ge2019deformation}. Unlike Thurston's circle packing theorem, which provides explicit conditions on discrete curvatures, it is not easy to tell about which target curvatures allow for the existence of inversive distance circle packing. Following the ideas of Gu et al. in solving the vertex scaling model, by changing the triangulation of the triangular mesh and using a generalized Ptolemy formula to change the triangulation, we prove that all restrictions on discrete curvatures, except for the Gauss-Bonnet formula, can be removed, then the inversive distance circle packing exists.

When refining a triangular mesh to approximate a smooth surface, it is also worth studying whether discrete conformal mappings converge to their continuous case. In 1987, Rodin and Sullivan proved that tangential circle packing converge to Riemann mappings \cite{rodin1987convergence}, with the requirement that the vertices of the mesh have degree 6. Stephenson provided numerous illustrations as evidence \cite{stephenson1999approximation}. He and Schramm reproved the convergence theorem \cite{he1996convergence}, which applies to more general graphs and converges up to the first and second derivatives. Gu, Luo, and Wu proved that under certain conditions, the discrete conformal mappings obtained through vertex scaling converge to Riemann mappings \cite{gu2019convergence}. Other convergence result can be found in \cite{luo2021convergence} and \cite{chen2022bowersstephensons}. The later one is on the convergence of inversive distance circle packing.

We notice that there are some related work by Bobenko and Lutz in \cite{bobenko2023decorated} recently.

\subsection{Contribution and Organization of the paper}

To prove the main theorem, we follow the idea of Guo in \cite{guo2011local}, by using a variational principle on an energy function. The energy function is written into the form of an integration. If we integrate it by using Schl\"afli's formula, we see that the energy function is the volume of a sum of some hyperbolic polyhedra in $\mathbb{H}^3$. If we take derivate, we obtain the discrete Ricci flow. The main purpose is to ensure that this flow is convergent and never blow up.

In Section \ref{sec:teich}, we define a useful discriminant of inversive distance, and write down a generalized Ptolemy equality. The Ricci flow changes the metric on the surface, while the triangulation may break. However, triangulation is not essential, and we should focus on the metric. From this view point, we should do a surgery on triangulation when it is about to break the triangle inequality. In fact, we do it earlier, when the triangulation is about to be not Delaunay. The generalized Ptolemy equality, or essentially the transform map of Teichm\"uller space with length coordinate, shows how to do the surgery of edge flipping.

In Section \ref{sec:del}, we write down a weighted Delaunay inequality with variables of inversive distances and radii of circles. This is asked as an open problem in \cite{bobenko2004variational}, and give another decreasing energy to prove the existence of weighted Delaunay triangulation by edge flipping method in \cite{lutz2022canonical} as well. The proof by isotopy method is in \cite{zhu2019discrete}. With this inequality, we show that the triangulation will never break and the flow will never blow up if we keep the triangulation be always weighted Delaunay. Moreover, we show the relationship of weighted Delaunay between Euclidean and hyperbolic cases.

In Section \ref{sec:diffeo}, for two case of piecewise flat metric and hyperbolic metric with geodesic boundaries, we construct two total spaces with base spaces as Teichm\"uller spaces, and proved the local finite cell decomposition of them and the $C^1$ diffeomorphism between them. The cells are defined by weighted Delaunay inequalities of triangulations, on whose boundaries it is very lengthy to take derivate for radius and especially inversive distance variables, thus we prove them by showing codes.

In Section \ref{sec:proof}, we show that the discrete conformal equivalent class of inversive distance circle packing is a fiber lived on Teichm\"uller space of polyhedral metric, which passed only a finite number of cells. On using variational principle to find the critical point which meets the target discrete curvature satisfying Gauss-Bonnet formula, we have to ensure the Ricci potential is $C^2$, even through the fiber bundle of Teichm\"uller space of hyperbolic metric is trivial. That is why the $C^1$ differential structure on fiber bundle of Teichm\"uller space of polyhedral metric proved in Section \ref{sec:diffeo} is essential. Finally, we introduce the discrete Ricci flow. The advantage of our work is that when computation iterations by Newton's method, it definitely convergence. Moreover, we do not need to find when the weighted Delaunay condition break for edge switch which is annoying in practice, and just check and flip edges using generalized Ptolemy equation on every iteration step.

\section{Preliminary and notation}

\subsection{Triangulation}

Suppose $S$ is a closed surface, and given a non-empty finite subset $V \subset S$, then we call the tuple $(S,V)$ a \emph{marked surface}.

In this paper, we use $\Delta$-complex instead of simplicial complex to define a triangulation, which allows the cells gluing itself. Given a marked surface $(S,V)$, a (topological) triangulation $\mathcal{T}$ is a $\Delta$-complex decomposition up to an isotopy fixing $V$, whose set of zero-cells is $V$ \cite{hatcher2002algebraic}.

We only consider the marked surface with the negative Euler characteristic when punched, which is $\chi(S \setminus V)<0$. If not, there is no triangulation for it. Also, the marked surface should be oriented, or we would study its oriented double cover instead. 

We denote $e_{ij}$ to be the one-cell of $\mathcal{T}$, or the \emph{edge}, connecting $v_i$ and $v_j$, and $f_{ijk}$ to be the two-cell of $\mathcal{T}$, or the \emph{face}, surrounded by $e_{ij}$, $e_{ik}$ and $e_{jk}$. Let $n$ be the number of vertices $\left|V\right|$, and $g$ be the genus of $S$, then the number of edges is $\left|E\right|=6g-6+3n$, and the number of faces is $\left|F\right|=4g-4+2n$, where $E=E(\mathcal{T})$ is the set of edges and $F=F(\mathcal{T})$ is the set of faces.	We use $\mathbb{R}^A$ to represent the space of function $\{\,\mathbf{x} \colon A \to \mathbb{R} \mid \mathbf{x}(a_i)=x_i \in \mathbb{R}\,\}$ defined on a finite set domain $A=\{a_1,\dots,a_n\}$. Here $A$ might be $V$ or $E$.

We denote a \emph{hinge} $\Diamond_{ij;kl}$ to be the edge $e_{ij}$ with two faces $f_{ijk}$ and $f_{ijl}$ at both sides. A \emph{flipping} of $e_{ij}$ is to replace the hinge $\Diamond_{ij;kl}$ by $\Diamond_{kl;ij}$. There are infinitely many triangulations on a marked surface in general, however, Hatcher \cite{hatcher1991triangulations}, Mosher\cite{Mosher1988Tiling} and Penner \cite{penner1987the} proved the following theorem.
\begin{thm}
    Let $\mathcal{T}$ and $\mathcal{T}'$ be two triangulations of $(S,V)$, then there exists finite number of triangulations $\mathcal{T}=\mathcal{T}_1,\mathcal{T}_2,\dots,\mathcal{T}_k=\mathcal{T}'$ such that $\mathcal{T}_i$ can flip to $\mathcal{T}_{i+1}$ where $1\le i<k$.    
\end{thm}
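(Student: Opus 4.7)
The plan is to deduce flip-connectedness from Penner's cell decomposition of the decorated Teichm\"uller space of $(S,V)$, a strategy that fits the Teichm\"uller-theoretic spirit of the rest of the paper.

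Since $\chi(S\setminus V)<0$, I equip $S\setminus V$ with a complete finite-area hyperbolic metric having cusps at the points of $V$, and let $\widetilde{\mathcal{T}}(S,V)$ denote the decorated Teichm\"uller space obtained by additionally choosing a horocycle at each cusp. This space is homeomorphic to $\mathbb{R}^{|V|}_{>0}\times\mathcal{T}(S,V)$, and is in particular connected. Penner's convex hull construction in the Minkowski model assigns to each decorated structure a canonical ideal polytopal decomposition of $S\setminus V$. I define $C_{\mathcal{T}}\subset\widetilde{\mathcal{T}}(S,V)$ to be the locus of decorations whose canonical decomposition coincides with a given ideal triangulation $\mathcal{T}$; together with lower-dimensional faces coming from decorations whose canonical decomposition contains non-triangular cells, these form a locally finite cell complex covering $\widetilde{\mathcal{T}}(S,V)$.

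The geometric crux is the observation that a codimension-one face of this decomposition arises precisely when exactly one cell of the canonical decomposition is an ideal quadrilateral rather than two triangles; such a wall lies in the closure of exactly two top-dimensional cells $C_{\mathcal{T}_1}$ and $C_{\mathcal{T}_2}$, where $\mathcal{T}_2$ differs from $\mathcal{T}_1$ by flipping the diagonal of that quadrilateral. I then pick representatives $x\in C_{\mathcal{T}}$ and $x'\in C_{\mathcal{T}'}$ and, using path-connectedness together with local finiteness, join them by a smooth path transverse to every stratum and disjoint from all strata of codimension $\ge 2$. Such a generic path crosses only finitely many codimension-one walls, and reading off the triangulations of the open cells it visits produces the desired flip sequence $\mathcal{T}=\mathcal{T}_1,\ldots,\mathcal{T}_k=\mathcal{T}'$.

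The hard parts will be (i) establishing local finiteness of Penner's cell complex and (ii) handling the $\Delta$-complex subtlety that faces may be self-glued. Point (i) reduces to a compactness statement for Delaunay cells in the Minkowski hyperboloid picture. For (ii), one works equivariantly on the universal cover $\widetilde{S\setminus V}$, where the lifted triangulation is genuinely simplicial and Penner's construction applies verbatim, then descends the cell structure to the quotient; the flip relation remains combinatorially meaningful because a flip of an edge in $(S,V)$ lifts to a simultaneous flip of its orbit under the deck group.
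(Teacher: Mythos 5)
The paper does not prove this statement itself; it quotes it as a known theorem and cites Hatcher, Mosher, and Penner, and your proposal is essentially a sketch of Penner's proof via the convex-hull (Epstein--Penner) cell decomposition of decorated Teichm\"uller space, so you are reconstructing one of the cited arguments rather than taking a new route. The outline is sound: the identification of codimension-one walls with decompositions containing a single ideal quadrilateral, the generic-path argument, and the passage to the universal cover to handle self-glued $\Delta$-complex faces are all correct. The one substantive point you use without flagging it is that $C_{\mathcal{T}}\neq\varnothing$ for \emph{every} ideal triangulation $\mathcal{T}$ --- i.e.\ that the convex hull construction realizes every triangulation as the canonical decomposition of some decorated structure; without this, ``pick $x\in C_{\mathcal{T}}$'' fails for unrealized $\mathcal{T}$. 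This surjectivity (together with the local finiteness you do flag) is a genuine theorem of Penner's, proved via simplicial coordinates, and should be cited or established rather than assumed.
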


Given a marked surface $(S,V)$, for any point $p\in S\setminus V$ there exists a neighborhood isometric to a region in $\mathbb{E}^2$, and for any vertex $v_i \in V$ there exists a neighborhood isometric to a region in a Euclidean cone with a cone angle of $\varphi_i$, then we say that $(S,V)$ has a \emph{polyhedral metric} or a \emph{piecewise flat metric}, denoted by $d_f$, where $f$ means \emph{flat}. A marked surface with a metric $(S,V,d_f)$ is called a \emph{polyhedral surface} or a \emph{piecewise flat surface}. For any points $p,q \in S$, we use $d_f(p,q)$ to indicate the distance between $p$ and $q$. The \emph{discrete curvature} at $v_i$ is $K_i \coloneqq 2\pi-\varphi_i < 2\pi$. The Gauss-Bonnet formula is  $\sum_{i=1}^{n}K_i=2\pi \chi(S)$.

\begin{thm}\label{triangle}		
    Any piecewise flat surface $(S,V,d_f)$ has a \emph{geodesic triangulation}, that is, a $\Delta$-complex decomposition with all edges geodesic on $d_f$ \cite{mitchell1987the}.
\end{thm}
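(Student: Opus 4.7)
The plan is to build a geodesic triangulation via the Voronoi--Delaunay duality adapted to the piecewise flat setting. For each $v_i\in V$ I would define the Voronoi cell
\[ \operatorname{Vor}(v_i)=\{\,p\in S : d_f(p,v_i)\le d_f(p,v_j)\text{ for all }j\,\}. \]
Since $S\setminus V$ is locally Euclidean and $V$ is finite, in any flat chart the equidistant set of two cone points is a union of Euclidean perpendicular bisectors, hence piecewise geodesic. Their union, the cut locus $\operatorname{Cut}(V)$, is therefore a finite embedded graph on $S$ with geodesic edges, and each component of its complement contains exactly one cone point.

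Second, I would pass to the dual (Delaunay) decomposition. Each vertex $w$ of $\operatorname{Cut}(V)$ is equidistant from some cone points $v_{i_1},\dots,v_{i_m}$ with $m\ge 3$, and I connect these cyclically by the shortest geodesics passing through the flat neighborhood of $w$, producing geodesic $m$-gons that tile $S$ with all edges geodesic and all vertices in $V$. If any cell has $m>3$ sides, I subdivide it into triangles by inserting non-crossing geodesic diagonals; such diagonals exist because in the local Euclidean development based at $w$ each Delaunay cell is star-shaped from $w$ and admits a fan triangulation.

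Finally I would check that this is a $\Delta$-complex decomposition in the sense used earlier (allowing self-gluings of cells): the zero-skeleton is $V$, every edge is a geodesic arc between vertices of $V$, every 2-cell is a geodesic triangle, and all attaching maps come from isometries of flat charts, so the combinatorial gluing data is consistent with the geometry.

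The main obstacle is cone points $v_i$ with $\varphi_i\ge 2\pi$: then the exponential map at $v_i$ is not injective on large balls, so $\operatorname{Vor}(v_i)$ can fail to be simply connected and its boundary may self-intersect. The remedy is to work in the tangent cone $T_{v_i}$ and unfold locally; by compactness of $S$ and finiteness of $V$, only finitely many sheets of the exponential map near $v_i$ meet other vertices, so the equidistant structure remains piecewise geodesic with finite combinatorics, and the dual construction still yields a geodesic triangulation of $(S,V,d_f)$.
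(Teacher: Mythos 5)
The paper does not actually prove this statement; it cites Mitchell \cite{mitchell1987the} and leaves it at that, so there is no internal proof to match. Your Voronoi--Delaunay route is a legitimate and standard way to establish the result, and it is in fact the unweighted special case of the weighted Voronoi--Delaunay machinery the paper itself develops later (Theorems \ref{vor_f}--\ref{local_whole_f}). That said, as written your argument has a genuine gap at its first step.

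The gap is in the definition of the cells and of $\operatorname{Cut}(V)$. You define $\operatorname{Vor}(v_i)$ by the distance inequality alone and take $\operatorname{Cut}(V)$ to be the union of the pairwise equidistant sets $\{p: d_f(p,v_i)=d_f(p,v_j)\}$, $i\ne j$. This omits the self-cut-locus of each vertex, i.e.\ the points admitting two distinct shortest geodesics to the \emph{same} nearest vertex. Consequently the claim ``each component of the complement contains exactly one cone point'' fails: for a surface with a single marked vertex (allowed here, e.g.\ a genus-$2$ surface with $|V|=1$, and exactly the situation a $\Delta$-complex is designed for), your $\operatorname{Vor}(v_1)$ is all of $S$ and your $\operatorname{Cut}(V)$ is empty, so the complement is not a disk and the dual construction never starts. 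The same problem occurs whenever a cell is adjacent to itself across a handle, which is a topological phenomenon independent of your large-cone-angle discussion at the end, so the proposed remedy (unfolding in the tangent cone at vertices with $\varphi_i\ge 2\pi$) does not address it. The fix is exactly the one the paper builds into its Definition of the inner weighted Voronoi cell: require in addition that $p$ have a \emph{unique} shortest geodesic to its nearest vertex, so that each cell is star-shaped with respect to geodesics from $v_i$ and hence an open disk, and the $1$-skeleton is the full cut locus of $V$. Correspondingly, a Voronoi vertex $w$ should be characterized by having $m\ge 3$ distinct minimizing \emph{geodesics} to $V$ (not $m\ge 3$ distinct vertices), and the dual edges should be the chords of the developed disk $B(w,d_f(w,V))$ joining consecutive hit points on its boundary circle, rather than globally shortest geodesics between the vertices. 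With those corrections the rest of your outline (convexity of the inscribed Delaunay polygons, fan subdivision, consistency of the gluings as a $\Delta$-complex) goes through along standard lines.
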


\subsection{Teichm\"uller space}

The \emph{Teichm\"uller space of piecewise flat metric} on $(S,V)$, denoted by $Teich_f(S,V)$, is the space of all piecewise flat metric on $(S,V)$ considered up to isometry isotopic to the identity map fixing $V$. The equivalence class of a piecewise flat metric $d_f$ is denoted by $[d_f]$.

Given a triangulation $\mathcal{T}$ of $(S,V)$, the edge length of $\mathcal{T}$ should be in the convex polytope
\[
    \mathbb{R}^{E(\mathcal{T})}_\Delta \coloneqq
    \left\{\, \mathbf{x} \in \mathbb{R}^{E(\mathcal{T})} \mid
    \forall f_{ijk} \in F(\mathcal{T}),\,
    0<x(e_{ij})<x(e_{jk})+x(e_{ki}) \,\right\}
\]
Given a length function $\mathbf{x} \in \mathbb{R}^{E(\mathcal{T})}_\Delta$, replace $f_{ijk}$ by a Euclidean triangle of edge lengths $\mathbf{x}(e_{ij}),\mathbf{x}(e_{jk}),\mathbf{x}(e_{ki})$, and glue them by isometries along the corresponding edges, then we construct a piecewise flat metric $d_\mathbf{x}$ and produce an injective map
\[
    \Phi_\mathcal{T} \colon \mathbb{R}^{E(\mathcal{T})}_\Delta
    \to Teich_f(S,V) \quad \mathbf{x} \mapsto [d_\mathbf{x}].
\]
Let $P_f(\mathcal{T}) \coloneqq \Phi_\mathcal{T}(\mathbb{R}^{E(\mathcal{T})}_\Delta)$, by Theorem \ref{triangle} we know that
\[
    Teich_f(S,V)=\bigcup_\mathcal{T}P_f(\mathcal{T}), 
\]
of which the union is over all triangulations of $(S,V)$. 

After flipping a convex hinge of $\mathcal{T}$ we get $\mathcal{T}'$, then we know that the transform map $\Phi_\mathcal{T}^{-1} \circ \Phi_\mathcal{T}'$ is real analytic by the cosine law. If we flip over a five-point star, the metric does not change, which is a pentagon relation. Thus, $Teich_f(S,V)$ is a simply connected real analytic $6g-6+3n$ dimension manifold.

Given a surface $\Sigma$ with or without boundary, the \emph{Teichm\"uller space of hyperbolic metric} on $\Sigma$, denoted by $Teich(\Sigma)$, is the space of all hyperbolic metric with closed geodesic boundary on $\Sigma$ considered up to isometry isotopic to the identity map. We denote a hyperbolic metric by $d$ and its isotopy class by $[d]$.

In this paper we only consider the case that $\Sigma$ is oriented, compact with boundaries, and $\chi(\Sigma)<0$. By gluing a topological open dist $D_i$ on every boundary of $\Sigma$, and selecting any point in $D_i$ as the vertex $v_i$, we get a closed marked surface $(S,V)$. The inverse operation is also legal. We call that $(S,V)$ and $\Sigma$ are \emph{related}. A \emph{truncated triangulation} of $\Sigma$ is the intersection of $\Sigma$ and a triangulation of its related surface $(S,V)$, denoted by $\mathcal{T}=(\Gamma,E,F)$, where $\Gamma=\{\,\gamma_i=\partial D_i,\,i=1 \dots n\,\}$ and $n=\left|V\right|$. For any hyperbolic metric $d$ on $\Sigma$, there exist a unique geodesic isotopic to $e_i \in E$ and orthogonal to the geodesic boundary $\partial \Sigma$ \cite{luo2007teichmuller} \cite{dai2008variational}.

\begin{defn}\label{Oemga}	
    Given a hyperbolic surface with $(\Sigma,d)$ and a truncated triangulation $\mathcal{T}$, we define a length function    
    \[
        \mathbf{x} \colon E(\mathcal{T}) \to \mathbb{R}_{>0}
        \quad e_i \mapsto x_i
    \]
    or $\mathbf{x} \in \mathbb{R}_{>0}^{E(\mathcal{T})}$, called the \emph{length coordinate} of $d$, where $x_i$ is the length of the unique geodesic isotopic to $e_i$ and orthogonal to $\partial \Sigma$. We define the coordinate chart with respect to $\mathcal{T}$ by
    \[
        \Omega_\mathcal{T}^{-1} \colon Teich(\Sigma) \to \mathbb{R}_{>0}^{E(\mathcal{T})}
        \quad [d] \mapsto \mathbf{x}.
    \]
\end{defn}

\begin{thm}\label{teich home}		
    $\Omega_\mathcal{T}$ is a homeomorphic map \cite{dai2008variational}\cite{ushijima1999canonical}.
\end{thm}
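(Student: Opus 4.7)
The plan is to construct an explicit inverse for $\Omega_\mathcal{T}^{-1}$ by gluing right-angled hyperbolic hexagons, then to verify continuity in both directions. First I observe that, for any hyperbolic metric $d$ on $\Sigma$ with geodesic boundary, each face $f_{ijk}$ of the related triangulation of $(S,V)$, after truncation by the boundary curves $\gamma_i,\gamma_j,\gamma_k$, corresponds to a right-angled hexagon whose three \emph{long} alternating sides are the geodesic representatives of $e_{ij},e_{jk},e_{ki}$ orthogonal to $\partial\Sigma$, and whose three \emph{short} alternating sides lie on the boundary components.

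To build the inverse map $\Omega_\mathcal{T}$, I would take any $\mathbf{x}\in\mathbb{R}_{>0}^{E(\mathcal{T})}$ and, for each face $f_{ijk}$ of $\mathcal{T}$, invoke the classical fact from hyperbolic geometry that the three alternating side lengths $(x_{ij},x_{jk},x_{ki})\in\mathbb{R}_{>0}^3$ determine a unique right-angled hexagon in $\mathbb{H}^2$ up to isometry, via the hexagon cosine law. Crucially, \emph{no} inequality constraints are imposed on the three lengths, in sharp contrast to the Euclidean triangle case where one needs $\mathbf{x}\in\mathbb{R}^{E(\mathcal{T})}_\Delta$. I would then glue these hexagons by isometries along the pairs of long sides corresponding to the same edge of $\mathcal{T}$, obtaining a hyperbolic metric with closed geodesic boundary on $\Sigma$; the short sides automatically reassemble into the boundary curves $\gamma_i$. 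The right angles at the corners are precisely what is needed to guarantee smoothness of $\partial\Sigma$ where three hexagons meet, so no cone singularity arises.

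Continuity of $\Omega_\mathcal{T}$ then follows from the real-analytic dependence of a right-angled hexagon on its three alternating side lengths together with the smooth dependence of the isometric gluing. Continuity of $\Omega_\mathcal{T}^{-1}$ follows from the fact that the unique geodesic orthogonal to $\partial\Sigma$ in the homotopy class of $e_i$ varies continuously with $[d]$, a standard consequence of the structural stability of simple geodesics on hyperbolic surfaces with geodesic boundary. That the two maps are mutual inverses is immediate from the two uniqueness statements: uniqueness of the orthogonal geodesic representative on the one hand, and uniqueness of the right-angled hexagon with prescribed alternating sides on the other.

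The main technical input, and the conceptual heart of the proof, is the unconstrained existence-and-uniqueness of the right-angled hexagon with prescribed alternating side lengths; this is what lets $\mathbb{R}_{>0}^{E(\mathcal{T})}$ serve as a global chart rather than a proper open subpolytope, and it is also what makes the hyperbolic setup more convenient than the piecewise-flat setup introduced earlier, where $\mathbb{R}^{E(\mathcal{T})}_\Delta$ was only an open cell of $Teich_f(S,V)$ and several charts had to be patched together. The remaining work is bookkeeping: checking well-definedness of the gluing across edges, which reduces to matching a single length on both sides, and noting that the dimension $6g-6+3n$ matches on both sides so no degeneracy is hidden.
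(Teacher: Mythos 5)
The paper offers no proof of this theorem; it is quoted from the cited references, and your hexagon-gluing construction is exactly the standard argument found there (Ushijima; Luo--Dai), so the route is the right one. The genuinely classical inputs you use are fine: existence and uniqueness of a convex right-angled hexagon with three prescribed alternating side lengths (no constraints on the lengths), and the fact that the right angles pin down the gluing along each edge with no twist parameter, which is why $\mathbb{R}_{>0}^{E(\mathcal{T})}$ is a single global chart of dimension $6g-6+3n$.

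There is, however, one genuine gap, and it sits at the heart of the theorem. In your first paragraph you \emph{assume} that for an arbitrary hyperbolic metric $d$ with geodesic boundary, the orthogonal geodesic representatives of the edges of $\mathcal{T}$ are simple, pairwise disjoint, and cut $\Sigma$ into right-angled hexagons realizing the combinatorics of $\mathcal{T}$. This is precisely the surjectivity of $\Omega_\mathcal{T}$ (equivalently, $\Omega_\mathcal{T}\circ\Omega_\mathcal{T}^{-1}=\mathrm{id}$ on $Teich(\Sigma)$), and it does not follow from the two uniqueness statements you invoke at the end: those only give $\Omega_\mathcal{T}^{-1}\circ\Omega_\mathcal{T}=\mathrm{id}$ on $\mathbb{R}_{>0}^{E(\mathcal{T})}$, since the hexagon sides of a glued-up metric are visibly orthogonal geodesic arcs in the right isotopy classes. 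For the other composition you must show that the orthogonal representatives of topologically disjoint edge classes cannot intersect; the standard argument is that each such representative is the unique length minimizer among arcs in its class with endpoints free on $\partial\Sigma$, and that two crossing minimizers in disjoint classes admit a cut-and-paste surgery producing strictly shorter arcs in isotopic classes, a contradiction (with some care when edges are loops or faces are self-glued, which the $\Delta$-complex setting allows). Once that is supplied, your continuity argument for the inverse can also be bypassed: $Teich(\Sigma)$ is a manifold of dimension $6g-6+3n$, so a continuous bijection from $\mathbb{R}_{>0}^{E(\mathcal{T})}$ is a homeomorphism by invariance of domain.
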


\subsection{Weighted Voronoi-Delaunay duality}
In the paper \cite{zhu2019discrete}, Zhu proved that weighted Voronoi decomposition on polyhedral surface is a unique CW decomposition by revising the definition of Voronoi cell. Also, he proved that the existence and uniqueness of weighted Delaunay triangulation on a polyhedral surface by using a construction of the isotopy cover map.

Given a piecewise flat surface $(S,V,d_f)$, we define the \emph{weight function} on every vertex to be $\mathbf{w} \colon V \to \mathbb{R},v_i \mapsto w_i$, or $\mathbf{w} \in \mathbb{R}^V$. The original definition of \emph{weighted Voronoi cell} is
\[
    \left\{\,p \in S \mid d_f(p,v_i)^2-w_i \le d_f(p,v_j)^2-w_j,
    \forall j \ne i\,\right\}.
\]

However, in order to ensure that every cell with respect to a vertex exist and be open and simply connected, Zhu defined the \emph{inner weighted Voronoi cell}. In this paper for inversive distance case, the weight function is positive and with restrictions. Since the geometry meaning of the weight is the square of a radius, we use radii instead. Define the radii as $\mathbf{r} \colon V \to \mathbb{R}_{>0},\, v_i \mapsto r_i$ or $\mathbf{r} \in \mathbb{R}_{>0}^V$, the domain of the weight is defined to be
\[
    R = \{\,\mathbf{r} \in \mathbb{R}_{>0}^V \mid
    0<r_i<\mathrm{Inj}(v_i) \mbox{ and }
    \forall i \ne j,r_i+r_j<d_f(v_i,v_j) \,\}.
\]
This domain can be extended while keeping the above propositions, however, for the inversive distance case, it should be restricted like this. This domain is related to the piecewise flat metric $d_f$ on $(S,V)$, but we omit these symbols because they can be recognized from the context. See \cite{zhu2019discrete} for detail.

\begin{defn}	
    Given a piecewise flat surface $(S,V,d_f)$, and a weight function $\mathbf{r} \in R$, the \emph{inner weighted Voronoi cell} of $v_i$, denoted by $Vor_f(v_i)$, is defined to be the set of all $p\in S$ satisfying that 
    \begin{itemize}
        \item there exists a unique shortest geodesic on $S$ connecting $p$ and $v_i$, and
        \item for any $j\neq i$, $d_f(p,v_i)^2-r_i^2 < d_f(p,v_j)^2-r_j^2$.
    \end{itemize}    
\end{defn}

\begin{thm}\label{vor_f}		
    There exists a unique CW decomposition of $S$, called the \emph{weighted Voronoi decomposition}, of which the set of all the 2-cells is $\{\, Vor_f(v_i) \mid v_i \in V \,\}$.
\end{thm}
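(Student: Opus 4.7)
The plan is to verify the CW-decomposition axioms for the family $\{Vor_f(v_i) \mid v_i \in V\}$ and then argue uniqueness. First, pairwise disjointness is immediate from the strict inequalities in the definition. Openness holds because the locus of points admitting a unique shortest geodesic to $v_i$ is the complement of the cut locus from $v_i$ (which is closed in $S$), and the strict inequalities $d_f(p,v_i)^2-r_i^2<d_f(p,v_j)^2-r_j^2$ for every $j\neq i$ are open conditions. The union $\bigcup_i Vor_f(v_i)$ is dense in $S$: a generic point has both a unique nearest weighted vertex (the minimum over a finite set of continuous functions is generically attained once) and a unique shortest geodesic to that vertex.

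The main content is to show each $Vor_f(v_i)$ is an open topological 2-disk. My approach is to pull back via the exponential map at $v_i$. Let $T_{v_i}S$ denote the tangent cone of angle $\varphi_i$ at $v_i$, and consider $\exp_{v_i} \colon T_{v_i}S \to S$, which restricted to the set of vectors whose terminal geodesic is minimizing and unique is an isometric embedding onto $S$ minus the cut locus of $v_i$. I would prove that $\exp_{v_i}^{-1}(Vor_f(v_i))$ is an open, star-shaped region about the origin. The flat planar fact that the weighted bisector of $v_i$ and any lift of $v_j$ is a straight radical-axis segment, combined with the constraints $r_i<\mathrm{Inj}(v_i)$ and $r_i+r_j<d_f(v_i,v_j)$, places $v_i$ strictly on the near weighted side of every such bisector. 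A monotonicity check of $t \mapsto d_f(\gamma(t),v_i)^2-r_i^2 - d_f(\gamma(t),v_j)^2+r_j^2$ along any radial geodesic $\gamma$ from $v_i$ to $p\in Vor_f(v_i)$ then shows the entire segment lies in the preimage, giving star-shapedness. Since $\exp_{v_i}$ restricted to this preimage is a homeomorphism onto $Vor_f(v_i)$, the cell is an open $2$-disk.

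Once each $Vor_f(v_i)$ is known to be a $2$-cell, its topological frontier in $S$ decomposes into arcs on which either the minimizing weighted vertex is not unique (yielding weighted bisector arcs as $1$-cells, each locally straight in the flat pieces) or the shortest geodesic to the minimizer fails to be unique (giving cut-locus arcs through cone points). Finiteness of this combinatorial structure follows from $|V|<\infty$ together with compactness of $S$, and attaching these $1$-cells together with their $0$-cell endpoints produces the desired CW structure covering $S$. Uniqueness is automatic: the open $2$-cells are prescribed by the formula, so their closures and the associated lower skeleton are determined. The main obstacle is precisely the star-shapedness step in paragraph two: in a polyhedral surface the cut locus from $v_i$ can be an intricate graph passing through cone singularities, and a priori a minimizing geodesic could leave and re-enter $Vor_f(v_i)$. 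The hypotheses defining the admissible weight domain $R$ are engineered to exclude this, and Zhu's \emph{isotopy cover map} is the topological device that packages these geometric estimates cleanly across the cut locus, upgrading the pointwise inequalities to a global simple-connectedness statement for each cell.
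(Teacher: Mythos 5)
The paper does not actually prove this theorem: it is imported from \cite{zhu2019discrete} with only the remark that it is obtained by ``revising the definition of Voronoi cell'' and via an ``isotopy cover map,'' so there is no in-paper argument to measure your proposal against. Judged on its own terms, your outline has a genuine gap at precisely the step you yourself flag as the main obstacle. Disjointness, openness, and density are fine, but the claim that each $Vor_f(v_i)$ is a $2$-cell rests on the asserted monotonicity of $t \mapsto d_f(\gamma(t),v_i)^2 - d_f(\gamma(t),v_j)^2$ along a radial geodesic $\gamma$ from $v_i$. In the Euclidean plane this quantity is \emph{affine} along straight lines (the quadratic terms cancel), which is exactly why planar Laguerre cells are convex; on a cone surface, writing $d_j(t)=d_f(\gamma(t),v_j)$, one only has that $d_j$ is $1$-Lipschitz, so $\frac{d}{dt}\bigl(t^2-d_j(t)^2\bigr)=2t-2d_j(t)d_j'(t)$ has no definite sign once $d_j(t)>t$, and the function is genuinely non-monotone in general. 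What one can hope for is convexity: locally, $t^2-d_j(t)^2$ is a supremum of affine functions obtained by developing the competing geodesics from $\gamma(t)$ to $v_j$ into the plane, and a convex function negative at both endpoints is negative throughout. But making that rigorous requires controlling how the family of competing geodesics changes as $\gamma$ crosses the cut locus of $v_j$ and cone points, and ruling out that $\gamma$ itself passes through a cone point (which would break the straight-line development). Your closing paragraph concedes exactly this and defers to Zhu's isotopy cover map --- that is, to the very reference from which the theorem is quoted --- so the proposal identifies the difficulty rather than resolving it.

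Two smaller points. First, uniqueness of the CW structure is not ``automatic'' from the prescription of the open $2$-cells alone: one can always subdivide the $1$-skeleton, so you must specify which frontier points are $0$-cells (triple points of cells, branch points of cut loci) and prove there are finitely many of them; closure-finiteness of the frontier arcs, which live on the union of weighted bisectors and cut loci and could a priori accumulate at cone singularities, needs its own argument. Second, even granting star-shapedness, you should say a word about why a star-shaped open neighborhood of the apex in a cone of angle $\varphi_i$ (possibly exceeding $2\pi$) is an open disk --- true, but worth stating since the tangent space here is not $\mathbb{E}^2$. As written, the proposal is a reasonable plan of attack whose central estimate is missing.
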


\begin{defn}\label{del_def}
    Given a piecewise flat surface $(S,V,d_f)$ with weight $\mathbf{w} \in W$, there exist a unique CW decomposition whose 1-cells are geodesics connecting vertices, called \emph{weighted Delaunay tessellation}, to be the dual graph of the weighted Voronoi decomposition.

    The \emph{weighted Delaunay triangulation} is to subdivide polygon faces of the weighted Delaunay tessellation into triangles by connecting some geodesic diagonals in any ways.
\end{defn}

Suppose the corresponding weighted Delaunay triangulation is $\mathcal{T}=(V,E,F)$, for any face $f_{ijk} \in F$, we denote the dual of $f_{ijk}$ on $S$ by $O_{ijk}$, which is a 0-cell of the weighted Voronoi decomposition. By definition, there exist a positive real number $\rho_{ijk}$ satisfied that 
\[
    \begin{aligned}
        d_f(O_{ijk},v_i)^2-\rho_{ijk}^2&=r_i^2\\
        d_f(O_{ijk},v_j)^2-\rho_{ijk}^2&=r_j^2\\
        d_f(O_{ijk},v_k)^2-\rho_{ijk}^2&=r_k^2.
    \end{aligned}
\]

There is a circle, called the \emph{orthogonal circle}, denoted by $\odot O_{ijk}$, centered at $O_{ijk}$ with radius $\rho_{ijk}$, and orthogonal to three circles centered at $v_i,v_j,v_k$ with radius $r_i$, $r_j$ and $r_i$. Note that these notations above also hold when $\mathcal{T}$ is only geodesic but not weighted Delaunay, except that $O_{ijk}$ may not exist on $S$. For this case we just immerse the local part in to $\mathbb{E}^2$ to find $O_{ijk}$.

\begin{defn}\label{loc_del}
    Given a face $f_{ijk}$, we denote the distance between $O_{ijk}$ and edge $e_{ij}$ by $h_{ij,k}$, which is positive when $O_{ijk}$ and $v_k$ are on the same side of $e_{ij}$, and negative when they are on the different sides.
    
    An edge $e_{ij}$ is called to be \emph{local weighted Delaunay} if $h_{ij,k} + h_{ij,l} \ge 0$.
\end{defn}

\begin{thm}\label{local_whole_f}
    Given a  piecewise flat surface $(S,V,d_f)$ with a weight function $\mathbf{r} \in R$, we know that:
    \begin{itemize}
        \item The weighted Delaunay triangulation exists.
        \item The triangulation is weighted Delaunay, if and only if all the edges are local weighted Delaunay \cite{glickenstein2008geometric}\cite{bobenko2007discrete}\cite{gorlina2011weighted}.
        \item The weighted Delaunay triangulation is unique up to finite diagonal switches.
    \end{itemize}
\end{thm}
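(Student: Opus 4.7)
The plan is to reduce all three bullets to the existence and uniqueness of the weighted Voronoi decomposition furnished by Theorem \ref{vor_f}, supplemented by an edge-flipping argument for the local-versus-global equivalence.

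First I would establish existence. Taking the CW-dual of the inner weighted Voronoi decomposition of Theorem \ref{vor_f} produces a CW decomposition of $S$ whose one-cells are geodesic segments between points of $V$; these segments are precisely the ones prescribed by the orthogonal-circle data $O_{ijk}$, so this dual is the weighted Delaunay tessellation of Definition \ref{del_def}. Each two-cell is a geodesic polygon, and inserting geodesic diagonals (legitimate because $\mathbf{r}\in R$ keeps each polygon inside a region isometric to a planar convex domain away from cone points) yields a weighted Delaunay triangulation.

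Next I would prove the local-global equivalence. The implication \emph{global implies local} is immediate: for a weighted Delaunay $\mathcal{T}$, the adjacent dual points $O_{ijk}$ and $O_{ijl}$ are both weighted Voronoi vertices, so the signed heights combine to give $h_{ij,k}+h_{ij,l}\ge 0$ at every edge. For the converse I would argue that, once every edge is locally Delaunay, the assignment $f_{ijk}\mapsto O_{ijk}$ glues into a CW decomposition of $S$ dual to $\{Vor_f(v_i)\mid v_i\in V\}$, after which the uniqueness clause of Theorem \ref{vor_f} identifies this dual with the weighted Voronoi decomposition, forcing $\mathcal{T}$ to be weighted Delaunay. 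The gluing is checked hinge-by-hinge: inside the developing image of $\Diamond_{ij;kl}$ into $\mathbb{E}^2$, the condition $h_{ij,k}+h_{ij,l}\ge 0$ is exactly the planar power-distance condition, so the two orthogonal circles fit together along the radical axis through $e_{ij}$. For a triangulation not already known to be locally Delaunay, I would decrease a convex hinge energy under flips of non-Delaunay edges, as in \cite{bobenko2007discrete} and \cite{glickenstein2008geometric}; combinatorial finiteness of the reachable triangulations forces termination at a locally Delaunay one, which by the above argument is globally Delaunay. Uniqueness up to diagonal switches then follows at once: the weighted Delaunay tessellation is unique because the Voronoi decomposition is, and any two triangulations of a common polygonal face of the tessellation differ by a finite sequence of diagonal flips; only finitely many faces are non-triangular, so only finitely many flips separate any two weighted Delaunay triangulations.

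The main obstacle is the gluing argument in the second bullet. On a curved polyhedral surface one cannot globally apply the standard Euclidean lift sending $v_i$ to height $|v_i|^2-r_i^2$ and read Delaunayness as convexity of a piecewise-linear envelope, because the cone points and the topology of $S$ obstruct a single such lift. I would handle this by passing to the isotopy cover of $(S,V,d_f)$ constructed in \cite{zhu2019discrete}, where each locally Delaunay hinge lifts to an honest planar hinge satisfying the classical power inequality, and then descending the globally assembled dual CW structure back to $S$; this is also the route used, in effect, by the cited references \cite{glickenstein2008geometric} and \cite{gorlina2011weighted}.
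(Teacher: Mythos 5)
The paper does not actually prove this theorem: it is stated as a preliminary, with existence and uniqueness attributed to the author's thesis \cite{zhu2019discrete} (via the isotopy cover construction) and the local--global equivalence to \cite{glickenstein2008geometric}, \cite{bobenko2007discrete} and \cite{gorlina2011weighted}, so there is no in-paper argument to compare against line by line. Your proposal reconstructs precisely the route those citations indicate --- dualize the weighted Voronoi decomposition of Theorem \ref{vor_f} to obtain the tessellation of Definition \ref{del_def}, triangulate its polygonal faces by diagonals, and use the isotopy cover for the hard direction --- and the existence and uniqueness bullets are sound as you argue them, granted Theorem \ref{vor_f} and the fact that any two triangulations of a convex polygon are connected by finitely many diagonal flips.

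The one place your sketch is thinner than it needs to be is the converse of the second bullet. Hinge-by-hinge consistency of the orthogonal circles along shared edges (the radical-axis picture) is necessary but not by itself sufficient for the assembled dual complex to be the Voronoi dual: being weighted Delaunay requires that each dual point $O_{ijk}$ satisfy $d_f(O_{ijk},v_m)^2-r_m^2\ge d_f(O_{ijk},v_i)^2-r_i^2$ for \emph{every} vertex $v_m$ of $V$, not only for the four vertices of each hinge. The standard argument propagates this inequality along a geodesic from $O_{ijk}$ to $v_m$, using the local Delaunay inequality at each crossed edge to show the power distance cannot drop below the threshold; this walking step is where the cone points and the topology of $S$ genuinely enter, and it is exactly what the isotopy cover of \cite{zhu2019discrete} is built to control. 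You do delegate the global step to that construction and to the cited references, which is acceptable since the paper does the same, but the phrase ``the gluing is checked hinge-by-hinge'' overstates what the purely local check delivers; if you intend this as a self-contained proof rather than a reduction to the references, the propagation argument must be supplied explicitly.
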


Zhu also introduced the weighted Voronoi-Delaunay duality on hyperbolic surface with geodesic boundaries in \cite{zhu2019discrete}. The weight function has no limitations, but it must be positive.

\begin{lem}\label{gbh_sinh}
    Let $r_1,r_2>0$ and $\gamma_1,\gamma_2$ be two geodesics that do not intersect in $\mathbb{H}^2$, then the set of all points $q \in \mathbb{H}^2$ satisfied
    \begin{equation}\label{vor_sinh}
        r_1 \sinh d(q,\gamma_1)=r_2 \sinh d(q,\gamma_2),
    \end{equation}
    is a geodesic, which is orthogonal to the geodesic segment connecting $\gamma_1,\gamma_2$. If the equal sign in the equation \eqref{vor_sinh} is changed to the less-than sign, this will be the half space containing $\gamma_1$ partitioned by the geodesic, and if it is changed to the greater-than sign, this will be the half space containing $\gamma_2$.
\end{lem}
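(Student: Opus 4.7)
The plan is to use Fermi coordinates $(t, s)$ along the common perpendicular $\delta$ of $\gamma_1$ and $\gamma_2$. In these coordinates $\delta = \{s=0\}$, the hyperbolic metric reads $ds^2 + \cosh^2 s \, dt^2$, and each $\gamma_i$ is the coordinate line $\{t = t_i\}$, where $t_2 - t_1$ equals the distance $D$ between the two geodesics. Because every line $\{t = \text{const}\}$ is itself a geodesic perpendicular to $\delta$, it suffices to show that the locus in the lemma has exactly this form.

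The key computation is the identity
\[
    \sinh d(q, \gamma_i) = \cosh s \cdot |\sinh(t - t_i)|, \qquad q = (t, s),
\]
which separates the distance into $t$- and $s$-parts. I would derive it by passing to the hyperboloid model $\mathbb{H}^2 \subset \mathbb{R}^{2,1}$: cut out $\gamma_i$ by a unit spacelike vector $v_i$ with $\sinh d(q, \gamma_i) = |\langle q, v_i \rangle|$ in the Minkowski product, use the parametrization $q(t, s) = (\cosh s \cosh t,\, \cosh s \sinh t,\, \sinh s)$ together with the natural choice $v_i = \pm(\sinh t_i,\, \cosh t_i,\, 0)$, and evaluate $\langle q, v_i \rangle$ directly. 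Substituting the formula into the defining equation and cancelling the positive factor $\cosh s$ reduces the equation to the one-variable condition $r_1 |\sinh(t - t_1)| = r_2 |\sinh(t - t_2)|$, so the locus is automatically a union of vertical coordinate lines $\{t = t_0\}$.

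On the strip $t_1 < t < t_2$ both sides are positive and $r_1 \sinh(t - t_1) - r_2 \sinh(t_2 - t)$ strictly increases from a negative value to a positive one, giving a unique root $t_0$ and simultaneously the half-space assertion: the region $t < t_0$ (which contains $\gamma_1$) corresponds to the strict inequality $<$, while $t > t_0$ corresponds to $>$. The one point that will need care is the behaviour outside the strip: the absolute values flip signs there, so in principle the equation $r_1 |\sinh(t - t_1)| = r_2 |\sinh(t - t_2)|$ could acquire spurious roots when $r_1/r_2$ is far from $1$. To match the lemma's claim that the locus is a single geodesic, these extra roots must be ruled out using the admissibility condition on $r_1, r_2, D$ implicit in the weighted Voronoi setup — precisely the condition that forces the $\sinh$-hypercycles at radii $r_i$ around $\gamma_i$ to be disjoint — and this bookkeeping is the main obstacle I expect in the proof.
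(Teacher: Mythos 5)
Your reduction is correct as far as it goes: the Fermi-coordinate identity $\sinh d(q,\gamma_i)=\cosh s\,\lvert\sinh(t-t_i)\rvert$ checks out (e.g.\ via $\sinh d(q,\gamma_v)=\lvert\langle q,v\rangle\rvert$ in the hyperboloid model, exactly as you propose), and it reduces the locus to the one-variable equation $r_1\lvert\sinh(t-t_1)\rvert=r_2\lvert\sinh(t-t_2)\rvert$, whose solution set is a union of geodesics orthogonal to the common perpendicular. Note that the paper does not actually prove this lemma --- it is imported from \cite{zhu2019discrete} --- and the nearest in-paper argument, the proof of Lemma \ref{diameter}, goes a different way: it places the candidate bisector on a diameter of the Poincar\'e disk and identifies it with the radical axis of the two Euclidean circles carrying $\gamma_1,\gamma_2$, whose Euclidean radii are proportional to $r_1,r_2$. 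Your route is genuinely different, and it has the virtue of making the full solution set completely explicit.

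That explicitness, however, exposes the problem you flagged, and the problem is real: the escape hatch you hope for does not exist. Normalizing $t_1=0$, $t_2=D=d(\gamma_1,\gamma_2)$, the equation $r_1\sinh t=r_2\sinh(t-D)$ on $t>D$ has a root exactly when $r_2>r_1e^{D}$, namely at $e^{2t}=(r_2e^{D}-r_1)/(r_2e^{-D}-r_1)$, and symmetrically there is a root with $t<0$ when $r_1>r_2e^{D}$. So whenever $\lvert\log(r_1/r_2)\rvert>d(\gamma_1,\gamma_2)$ the locus consists of two disjoint geodesics, and the set where $r_1\sinh d(q,\gamma_1)<r_2\sinh d(q,\gamma_2)$ is not a half-space. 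The paper explicitly imposes no restriction on the hyperbolic weights (``the weight function has no limitations, but it must be positive''), so there is no admissibility condition to invoke. The statement is salvageable in exactly two ways: (a) read $d(q,\gamma_i)$ as a signed distance, negative on the side of $\gamma_i$ away from the other geodesic, in which case the equation becomes $r_1\sinh(t-t_1)=r_2\sinh(t_2-t)$, strict monotonicity gives a unique root in $(t_1,t_2)$, and all your half-space assertions follow with no case analysis; or (b) restrict the locus to the closed slab between $\gamma_1$ and $\gamma_2$ (equivalently, to the side of each $\gamma_i$ facing the other), which is all that is used downstream since the surface $\Sigma$ lies on one side of each boundary geodesic. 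You must adopt one of these readings explicitly; as literally stated, the lemma is false and your proof cannot be completed.
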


\begin{defn}
    Given a hyperbolic surface $(\Sigma,d)$ with geodesic boundaries $\partial \Sigma=\{\gamma_i,\dots,\gamma_n\}$ and a weight function $\mathbf{r} \in \mathbb{R}^V_{>0}$, 
    the \emph{inner weighted Voronoi cell} of $v_i$ in this case, denoted by $Vor(\gamma_i)$, is defined to be the set of all $p \in S$ such that there exists a unique geodesic connecting $p$ and $\gamma_i$ whose length is the distance $d(p,\gamma_i)$, and for any $j \ne i$, the inequality
    \[
        r_i \sinh d(v_i,\gamma_i)<r_j \sinh d(v_j,\gamma_j)
    \]
    holds.
\end{defn}

\begin{thm}\label{gbh_vor}
    Let $(S,V)$ be the related surface of $\Sigma$, namely, $\Sigma = S \setminus \bigcup_{i=1}^n D_i$ where $D_i$ are disjoint open disks containing $v_i$. Given a weight function $\mathbf{r} \in \mathbb{R}^V_{>0}$, there exists a CW decomposition of $S$, with $\{\, Vor(\gamma_i) \cup D_i \mid i=1,\dots,n \,\}$ as 2-cells. This decomposition is unique restricted on $\Sigma$.
\end{thm}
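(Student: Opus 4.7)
The plan is to lift the construction to the universal cover $\widetilde\Sigma\hookrightarrow\mathbb{H}^2$, build the tessellation there by a weighted pseudo-distance, and descend it by equivariance under the deck action of $\pi_1(\Sigma)$. Let $\{\tilde\gamma_{i,\alpha}\}$ denote the lifts of the boundary geodesics $\gamma_i$ (which together form $\partial\widetilde\Sigma$), and for each lift set
\[
    \phi_{i,\alpha}(p)\coloneqq r_i\sinh d(p,\tilde\gamma_{i,\alpha}),\qquad p\in\widetilde\Sigma.
\]
I would declare the upstairs cell of $\tilde\gamma_{i,\alpha}$ to consist of those $p$ with a unique realizing geodesic to $\tilde\gamma_{i,\alpha}$ satisfying $\phi_{i,\alpha}(p)<\phi_{j,\beta}(p)$ for every other lift. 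By Lemma \ref{gbh_sinh} each pairwise equation $\phi_{i,\alpha}=\phi_{j,\beta}$ defines a geodesic of $\mathbb{H}^2$, so the cell is an intersection of open geodesic half-planes—open, geodesically convex and simply connected.

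Next I would verify local finiteness and the tiling property upstairs. Because $\sinh d(p,\tilde\gamma)$ grows exponentially with $d(p,\tilde\gamma)$ while $\pi_1(\Sigma)$ acts properly discontinuously on the set of lifts, on any compact $K\subset\widetilde\Sigma$ only finitely many $\phi_{j,\beta}$ can realize the infimum, and the infimum is attained. Consequently the closed cells cover $\widetilde\Sigma$ and each meets only finitely many neighbours along geodesic arcs. The construction is manifestly equivariant, so projecting by the covering map yields a locally finite CW decomposition of $\Sigma$ whose 2-cells are exactly $\{Vor(\gamma_i)\}$, whose 1-cells are the projected geodesic bisector arcs, and whose 0-cells are the projected points where three or more bisectors concur.

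To extend to $S$, glue each removed disk $D_i$ back along $\gamma_i$. Since $Vor(\gamma_i)$ has $\gamma_i$ as a boundary component and is topologically an annulus, the union $Vor(\gamma_i)\cup D_i$ is a topological open disk—the 2-cell of $S$ demanded by the theorem—while the 1- and 0-cells inherited from $\Sigma$ remain unchanged. Uniqueness on $\Sigma$ is then forced: the strict defining inequalities pin down the 2-cells set-theoretically, and the common boundaries between adjacent 2-cells determine the 1-skeleton.

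The main obstacle will be that $\phi_{i,\alpha}$ is not itself a metric, unlike the quadratic weighted function $d_f^2-r^2$ used in the piecewise flat case, so the cover and local finiteness properties cannot be imported from a standard metric-Voronoi argument and must be extracted directly from the growth of $\sinh$. A secondary subtlety is ruling out that two distinct lifts of the same $\gamma_i$ project to overlapping cells on $\Sigma$; this is handled by the strict inequality in the definition together with the proper discontinuity of the deck action, which together guarantee that the downstairs cell of $\gamma_i$ is well defined and coincides with the image under the covering projection of any single upstairs cell of a lift.
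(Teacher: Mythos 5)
The paper does not actually prove Theorem \ref{gbh_vor} here: it is imported from the author's thesis \cite{zhu2019discrete}, where (as the surrounding text indicates) existence and uniqueness are obtained by revising the definition of the Voronoi cell and constructing an ``isotopy cover map,'' parallel to the treatment of Theorem \ref{vor_f} in the flat case. Your universal-cover argument is therefore a genuinely different route, and it is the natural one for the hyperbolic setting: Lemma \ref{gbh_sinh} makes every pairwise bisector a geodesic, $\widetilde\Sigma$ is a convex subset of $\mathbb{H}^2$, so each upstairs cell is an intersection of open half-planes, and equivariance plus the strict inequalities descend the picture to $\Sigma$. Your closing observation that a lift cannot tie with a distinct deck-translate of itself inside its own open cell is exactly the right way to see that the projection is injective on each cell.

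Two points need to be tightened. First, ``consequently the closed cells cover $\widetilde\Sigma$'' does not follow from local finiteness alone: for multiplicatively weighted Voronoi diagrams a cell can be empty, in which case a point where that site attains the minimum need not lie in the closure of any nonempty cell. Here you must use that $\phi_{i,\alpha}$ vanishes on $\tilde\gamma_{i,\alpha}$ while every competitor is strictly positive there, so each open cell contains its own geodesic and is nonempty; combined with convexity this gives that the closure of the open cell equals the intersection of the corresponding \emph{closed} half-planes (join any boundary point to an interior point and use that a half-open segment into an open half-plane stays inside), and only then do the closed cells cover and the annulus/retraction claim for $Vor(\gamma_i)$ follow. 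Second, the cell structure on the $1$-skeleton needs a word: a bisector component downstairs may be a closed geodesic carrying no triple point, which is not a $1$-cell until you insert an auxiliary $0$-cell; this is harmless for the statement (which only prescribes the $2$-cells and asserts uniqueness restricted to $\Sigma$), but it is precisely the reason the uniqueness claim is phrased as it is, and your last paragraph should acknowledge it rather than assert that the $0$-cells are exactly the concurrence points.
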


We continue to use the previous name, which is called \emph{weighted Voronoi decomposition} of $\Sigma$ or $(S,V)$ for convenience. 

\begin{thm}\label{gbh_unique}
    There exist a unique geodesic \emph{truncated} triangulation of $(\Sigma,d)$ up to finite diagonal switches with respect to $\mathbf{r} \in \mathbb{R}^V_{>0}$, called the \emph{weighted Delaunay triangulation}, to be the dual graph of the weighted Voronoi decomposition adding some geodesic diagonals into truncated polygons other than truncated triangle.
\end{thm}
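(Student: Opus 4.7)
The plan is to obtain the weighted Delaunay triangulation as the combinatorial dual of the weighted Voronoi decomposition provided by Theorem~\ref{gbh_vor}, and then to realize it geometrically using the perpendicularity statement of Lemma~\ref{gbh_sinh}. The argument closely parallels the Euclidean case recorded in Theorem~\ref{local_whole_f}, but with the sinh-weighted distance replacing the squared Euclidean distance.

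First I would set up the dual structure. By Theorem~\ref{gbh_vor} the collection $\{\mathrm{Vor}(\gamma_i)\cup D_i\}_{i=1}^n$ is a CW decomposition of $S$ whose restriction to $\Sigma$ is unique. Each $\mathrm{Vor}(\gamma_i)$ becomes a dual ``truncated vertex'' $\gamma_i$; each Voronoi $1$-cell between $\mathrm{Vor}(\gamma_i)$ and $\mathrm{Vor}(\gamma_j)$ becomes a dual edge $e_{ij}$; and each Voronoi $0$-cell where $k\ge 3$ cells meet becomes a dual $k$-gonal face. To realize $e_{ij}$ geometrically I would invoke Lemma~\ref{gbh_sinh}: the Voronoi edge between $\mathrm{Vor}(\gamma_i)$ and $\mathrm{Vor}(\gamma_j)$ lies on the geodesic $\{q:r_i\sinh d(q,\gamma_i)=r_j\sinh d(q,\gamma_j)\}$, which is precisely the perpendicular to the unique common perpendicular segment of $\gamma_i$ and $\gamma_j$ in $\mathbb{H}^2$. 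The dual edge $e_{ij}$ is then taken to be that common perpendicular segment, truncated to the portion lying between $\gamma_i$ and $\gamma_j$. Its existence and uniqueness in the isotopy class follow from the standard fact (cited above as \cite{luo2007teichmuller}\cite{dai2008variational}) that between any two boundary components there is a unique geodesic isotopic to a given arc and orthogonal to $\partial\Sigma$.

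Next I would check that these dual edges assemble into a truncated polygonal decomposition of $\Sigma$. Two dual edges $e_{ij}$ and $e_{kl}$ can only share a Voronoi $0$-cell as an endpoint; at such a $0$-cell the circular arrangement of the Voronoi cells meeting there induces a cyclic ordering of the incident dual edges, and they bound a truncated polygon whose non-boundary sides are the chosen perpendicular geodesics. Non-crossing of these perpendiculars in the interior of $\Sigma$ follows because each one is contained in the closure of the union of the two adjacent Voronoi cells, and the Voronoi cells are openly disjoint by Theorem~\ref{gbh_vor}. The result is a truncated polygonal CW decomposition of $\Sigma$ in which every face is a truncated $k$-gon with $k\ge 3$. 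Finally, for every truncated polygon with $k>3$ one chooses a diagonal triangulation by adding geodesic arcs connecting opposite $\gamma_i$'s inside the polygon; this can be done in finitely many combinatorial ways (counted by Catalan-type numbers) and yields a geodesic truncated triangulation.

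For uniqueness up to finite diagonal switches, I would argue that any geodesic truncated triangulation satisfying the ``weighted Delaunay'' dual description must collapse to the Voronoi decomposition under the dualization above, and the latter is unique by Theorem~\ref{gbh_vor}; hence the only freedom is the combinatorial choice of diagonals inside each non-triangular truncated polygon, and since every polygon has only finitely many diagonalizations, any two weighted Delaunay triangulations differ by a finite sequence of diagonal switches. The most delicate step I anticipate is the geometric embedding of the dual $1$-skeleton: one has to verify that the common perpendicular of $\gamma_i$ and $\gamma_j$, produced abstractly by Lemma~\ref{gbh_sinh} in the universal cover, descends to a simple arc on $\Sigma$ whose interior misses all other boundary components and other dual edges. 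This ultimately reduces to the disjointness of the inner weighted Voronoi cells and the uniqueness statement of Theorem~\ref{gbh_vor}, together with a local immersion of each truncated polygon into $\mathbb{H}^2$ analogous to the Euclidean immersion used after Definition~\ref{loc_del}.
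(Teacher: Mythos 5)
The paper itself does not prove Theorem~\ref{gbh_unique}; it is quoted from the thesis \cite{zhu2019discrete}, where (as the surrounding text indicates) the existence and embeddedness of the dual triangulation are established via a construction of an isotopy cover map. Your outline is the natural dualization argument that the theorem statement itself describes, and most of it (the combinatorial dual structure, the choice of diagonals, and the uniqueness up to finitely many diagonal switches once embeddedness is granted) is fine. But the one step you yourself identify as delicate --- that the dual geodesic $1$-skeleton is embedded and its edges pairwise non-crossing --- is exactly the crux of the theorem, and the justification you give for it is incorrect.

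Specifically, you claim non-crossing follows because each dual edge ``is contained in the closure of the union of the two adjacent Voronoi cells.'' This containment is false in general, already for unweighted Euclidean Delaunay triangulations: a Delaunay edge $e_{ij}$ need not be a Gabriel edge, i.e.\ the disk with diameter $v_iv_j$ may contain another site $v_m$, in which case the midpoint of $e_{ij}$ is strictly closer to $v_m$ than to $v_i$ or $v_j$ and hence lies outside $\mathrm{Vor}(v_i)\cup\mathrm{Vor}(v_j)$. The same phenomenon occurs in the weighted hyperbolic setting (it is precisely what the sign conventions for $h_{ij,k}$ in Definition~\ref{loc_del} are designed to accommodate: local weighted Delaunay only requires $h_{ij,k}+h_{ij,l}\ge 0$, so one foot of the common perpendicular may fall outside the corresponding Voronoi $1$-cell). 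Consequently disjointness of the open Voronoi cells does not by itself prevent two dual common perpendiculars from crossing, and your argument has no replacement for this. A correct proof needs a genuinely global input --- e.g.\ the Epstein--Penner/Ushijima convex-hull construction in Minkowski space, or the isotopy covering map method used in \cite{zhu2019discrete} --- to show the dual edges assemble into an embedded CW decomposition. A secondary, more minor point: on $\Sigma$ two boundary components $\gamma_i,\gamma_j$ admit infinitely many common perpendiculars (one per isotopy class of arc), so the dual edge must be specified as the common perpendicular of the two particular lifts in the universal cover whose Voronoi cells share the given $1$-cell; your appeal to ``the unique common perpendicular'' elides this.
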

 The word \emph{truncated} means if we fill the boundaries with disks, the triangles will be normal topological ones.

\subsection{Inversive distance circle packing}

In the paper \cite{guo2011local}, Guo proved the local rigidity of inversive distance circle packing. We adopt the notations from this paper.
\begin{defn}
    Given two circles with radii $r_1,r_2>0$, and the distance between their centers is $l_{12}$, then their \emph{inversive distance} is defined by
    \begin{equation}\label{inveuc}
        I_{12} \coloneqq \frac{l_{12}^2-r_1^2-r_2^2}{2r_1 r_2}
    \end{equation}
\end{defn}
 
The definition also holds for two circles with centers located at vertices on a piecewise flat surface. If there is a geodesic connecting these two vertices without passing through other vertices, the definition remains valid. For a geodesic that connects the same vertex without passing through other vertices, if the radius of the circle at that vertex is smaller than the injective radius at that point, it can be considered as two disjoint circles located at the two ends of this geodesic. Similarly, the inversive distance can be defined as well.

When $r_1+r_2<l_{12}$, we have $I_{12}>1$, which is the main case discussed in this paper. We do not care the case of $I_{12} \le 1$, because the intersection angle should be studied instead.

Given a piecewise flat surface $(S,V,d_f)$ with a geodesic triangulation $\mathcal{T}=(V,E,F)$, suppose $\mathbf{I} \in \mathbb{R}_{>1}^{E(\mathcal{T})}$ and $\mathbf{r} \in \mathbb{R}_{>0}^V$, or
\[
    \begin{array}{c}
        \mathbf{I} \colon E(\mathcal{T}) \to (1,\infty) \quad
        e_{ij} \mapsto I_{ij} > 1\\
        \mathbf{r} \colon V \to \mathbb{R}_{>0} \quad v_i \mapsto r_i>0
    \end{array}
\]
then we call the collection of every geodesic circles with radius $r_i$ centered at vertex $v_i \in V$ on $S$ as an \emph{inversive distance circle packing} with respect to the inversive distance $\mathbf{I}$, if every couple of circles between the geodesic edge $e_{ij} \in E$ has the inversive distance $I_{ij} \in \mathbf{I}$.

We give the inversive distance circle packing another notation. Given a  marked surface $(S,V)$ with a topological triangulation $\mathcal{T}$, we define the map
\[
    L_f \colon \mathbb{R}_{>1}^{E(\mathcal{T})} \times
    \mathbb{R}_{>0}^V \to \mathbb{R}_{>0}^{E(\mathcal{T})}
    \quad (\mathbf{I},\mathbf{r}) \mapsto \mathbf{l}
\]
where
\[
    \mathbf{l} \colon E(\mathcal{T}) \to \mathbb{R}_{>0} \quad
    e_{ij} \mapsto \sqrt{r_i^2+r_j^2 + 2I_{ij} r_i r_j}.
\]

If $L_f(\mathbf{I},\mathbf{r}) \in \mathbb{R}^{E(\mathcal{T})}_\Delta$, or the image of the map satisfies every triangle inequalities on all faces of $\mathcal{T}$, we take an element of the isotopy class of $\Phi_\mathcal{T} \circ L_f(\mathbf{I},\mathbf{r})$, called the piecewise flat metric $d_f$. Then the triangulation $\mathcal{T}$ becomes a geodesic triangulation of the piecewise flat surface $(S,V,d_f)$. We say that $(\mathbf{I},\mathbf{r})$ is an inversive distance circle packing of $(S,V,d_f)$ with respect to $\mathcal{T}$.

Note that $L_f$ is related to the triangulation, however, we omit the symbol $\mathcal{T}$ on the notation on the mapping, since it appears in the notation of the domain $\mathbb{R}_{>1}^{E(\mathcal{T})} \times \mathbb{R}_{>0}^V$.

At the end of this section, we define the \emph{lifting} of $L_f$. Denote that
\[
    Q_f(\mathcal{T}) \coloneqq L_f^{-1}(\mathbb{R}^{E(\mathcal{T})}_\Delta)
    \subset \mathbb{R}_{>1}^{E(\mathcal{T})} \times \mathbb{R}_{>0}^V.
\]

Define the lifting map
\[
    \tilde{L}_f \colon Q_f(\mathcal{T}) \to 
    \mathbb{R}^{E(\mathcal{T})}_\Delta \times \mathbb{R}_{>0}^V
    \quad (\mathbf{I},\mathbf{r}) \mapsto (L_f(\mathbf{I},\mathbf{r}),\mathbf{r})
\]
and the projection map
\[
    \pi \colon \mathbb{R}_{>0}^{E(\mathcal{T})} \times \mathbb{R}_{>0}^V
    \to \mathbb{R}_{>0}^{E(\mathcal{T})} \quad (\mathbf{l},\mathbf{r}) \mapsto \mathbf{l}
\]
then we have
\[
    \pi \circ \tilde{L}_f = L_f.
\]

\begin{prop}\label{lift_inject}
    $\tilde{L}_f$ is an injective map, then a real analytic homeomorphism from its domain to its image.
\end{prop}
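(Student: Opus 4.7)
The key observation is that the $\mathbf{r}$–coordinate is preserved by $\tilde{L}_f$, so the problem reduces to recovering $\mathbf{I}$ from $(\mathbf{l},\mathbf{r})$ edge by edge. The plan is therefore to exhibit an explicit inverse and verify that both directions are real analytic.

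First I would define the candidate inverse
\[
    G \colon \pi^{-1}(\mathbb{R}^{E(\mathcal{T})}_\Delta)
    \cap \bigl(\mathbb{R}^{E(\mathcal{T})}_{>0} \times \mathbb{R}_{>0}^V\bigr)
    \to \mathbb{R}^{E(\mathcal{T})} \times \mathbb{R}_{>0}^V,
    \quad (\mathbf{l},\mathbf{r}) \mapsto (\mathbf{J},\mathbf{r}),
\]
where $J_{ij} \coloneqq (l_{ij}^2 - r_i^2 - r_j^2)/(2 r_i r_j)$, which is exactly the inversive distance formula \eqref{inveuc}. Since $r_i, r_j > 0$, this $G$ is real analytic on its domain. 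A direct substitution gives $G \circ \tilde{L}_f = \mathrm{id}$ on $Q_f(\mathcal{T})$: the second coordinate is preserved, and for each edge $e_{ij}$, plugging $l_{ij}^2 = r_i^2 + r_j^2 + 2 I_{ij} r_i r_j$ into the formula for $J_{ij}$ recovers $I_{ij}$. In particular, this yields injectivity of $\tilde{L}_f$.

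Next I would verify that $\tilde{L}_f$ itself is real analytic. On $Q_f(\mathcal{T})$ the quantity $r_i^2 + r_j^2 + 2 I_{ij} r_i r_j$ is strictly positive (it equals $l_{ij}^2$ where $l_{ij}$ obeys the triangle inequalities), so $l_{ij} = \sqrt{r_i^2 + r_j^2 + 2 I_{ij} r_i r_j}$ is a composition of a positive real analytic function with the square root, hence real analytic in $(\mathbf{I},\mathbf{r})$. The second coordinate is the identity, and the condition $L_f(\mathbf{I},\mathbf{r}) \in \mathbb{R}^{E(\mathcal{T})}_\Delta$ cuts out $Q_f(\mathcal{T})$ as an open subset on which continuity of $\tilde{L}_f$ and of the restriction $G|_{\tilde{L}_f(Q_f(\mathcal{T}))}$ are both immediate.

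Combining these steps: $\tilde{L}_f$ is injective, both $\tilde{L}_f$ and $G$ restricted to the image are continuous, $G$ is a two-sided inverse of $\tilde{L}_f$ on the image, and both maps are real analytic. This gives the required real analytic homeomorphism onto $\tilde{L}_f(Q_f(\mathcal{T}))$. There is essentially no obstacle here, the statement is a book-keeping consequence of the definitions; the only mild care needed is to check that the image of $\tilde{L}_f$ lies in the locus where $G$ is defined and analytic, which is automatic because the $\mathbf{r}$–coordinate in the image is the same positive vector as in the domain.
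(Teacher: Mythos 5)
Your proposal is correct and follows essentially the same route as the paper: the paper's proof also recovers $\mathbf{I}$ from $(\mathbf{l},\mathbf{r})$ edge by edge using $\mathbf{r}=\mathbf{r}'$ and the positivity of the radii, you merely make the inverse map explicit via the formula $J_{ij}=(l_{ij}^2-r_i^2-r_j^2)/(2r_ir_j)$. No gap; the extra care you take about where $G$ is defined and analytic is a harmless elaboration of what the paper leaves implicit.
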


\begin{proof}
    If $\tilde{L}_f(\mathbf{I},\mathbf{r})=\tilde{L}_f(\mathbf{I}',\mathbf{r}')$, we have $(L_f(\mathbf{I},\mathbf{r}),\mathbf{r})=(L_f(\mathbf{I}',\mathbf{r}'),\mathbf{r}')$, then $\mathbf{r}=\mathbf{r}'$. Thus, we know by $\sqrt{r_i^2+r_j^2 + 2I_{ij} r_i r_j}=\sqrt{r_i^2+r_j^2 + 2I'_{ij} r_i r_j}$ that for any $e_{ij} \in E$ and $I_{ij}=I'_{ij}$, or $\mathbf{I}=\mathbf{I}'$.
\end{proof}

\section{Coordinate transformation on Teichm\"uller space}\label{sec:teich}

Suppose $\Sigma = S \setminus \bigcup_{i=1}^n D_i$ is a related surface of a marked surface $(S,V)$ with $n$ vertices, where $D_i$ are distinct open disks containing $v_i$. By Theorem \ref{teich home} we know that the coordinate chart of $Teich(\Sigma)$ can be defined via a triangulation of $(S,V)$. Using the symbol in definition \ref{Oemga} we denote the chart by $\Omega_\mathcal{T} \colon \mathbb{R}_{>0}^{E(\mathcal{T})} \to Teich(\Sigma)$. In this section we solve transform maps between charts with respect to different triangulations.

\begin{defn}
    We define the \emph{discriminant of inversive distance} by
    \[
        \Delta_{abc} \coloneqq a^2+b^2+c^2+2abc-1.
    \]
\end{defn}

\begin{prop}
When $a,b,c>1$, let $x=\arcch a,y=\arcch b,z=\arcch c$, then
    \[
        \Delta_{abc}=4\cosh\frac{x+y+z}2 \cosh\frac{y+z-x}2
        \cosh\frac{x+z-y}2 \cosh\frac{x+y-z}2 >4 .
    \]
\end{prop}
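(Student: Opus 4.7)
The plan is a direct verification of the identity via product-to-sum formulas for $\cosh$, followed by an elementary bound to get the strict inequality.

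First I would set $s = (x+y+z)/2$, so the four arguments on the right-hand side are $s, s-x, s-y, s-z$. Then I would pair them as $(s, s-x)$ and $(s-y, s-z)$ and apply the product-to-sum identity $2\cosh A \cosh B = \cosh(A+B) + \cosh(A-B)$ to each pair. Because $2s = x+y+z$, the sums and differences simplify nicely: $2s - x = y+z$, and $2s - y - z = x$. So each pair becomes an expression of the form $\cosh x + \cosh(y \pm z)$.

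Next I would multiply these two two-term expressions, giving four terms. The mixed terms contain $\cosh(y+z) + \cosh(y-z) = 2\cosh y \cosh z$, while the cross term $\cosh(y+z)\cosh(y-z)$ unfolds (again by product-to-sum, together with $\cosh 2t = 2\cosh^2 t - 1$) into $\cosh^2 y + \cosh^2 z - 1$. Collecting everything leaves $\cosh^2 x + \cosh^2 y + \cosh^2 z + 2\cosh x\cosh y\cosh z - 1$, which is $\Delta_{abc}$ after substituting back $a = \cosh x$, $b = \cosh y$, $c = \cosh z$.

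For the strict inequality $\Delta_{abc} > 4$: the hypothesis $a,b,c > 1$ gives $x,y,z > 0$, hence $s > 0$ and thus $\cosh s > 1$. Each of $\cosh(s-x), \cosh(s-y), \cosh(s-z)$ is at least $1$ because $\cosh \ge 1$ everywhere, so their product is at least $1$ and the total exceeds $4$ strictly. I do not anticipate any real obstacle; the only bit of care is choosing the pairing $(s, s-x)$ with $(s-y, s-z)$ (as opposed to other groupings) so that the $2s$ identities produce a symmetric, cleanly collapsing expression in $x, y, z$.
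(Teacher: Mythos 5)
Your verification is correct: the pairing $(s,s-x)$ with $(s-y,s-z)$, the product-to-sum expansion collapsing to $\cosh^2 x+\cosh^2 y+\cosh^2 z+2\cosh x\cosh y\cosh z-1$, and the bound $\cosh s>1$ with the remaining factors $\ge 1$ all check out. The paper states this proposition without any proof, so there is no argument to compare against; your computation is a complete and appropriate justification.
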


The following theorem tells that how does the inversive distance of its circle packing change when edge flipping occurs.

\begin{thm}\label{teich home formula}
Suppose $\mathcal{T}$ is a triangulation of $(S,V)$, and we get another triangulation $\mathcal{T}'$ by flipping the diagonal of the hinge $\Diamond_{ij;kl}$, then the transform map between these two coordinate charts is
\[
    \Omega_{\mathcal{T}'}^{-1} \circ \Omega_\mathcal{T} \colon
    \mathbb{R}_{>0}^{E(\mathcal{T})} \to \mathbb{R}_{>0}^{E(\mathcal{T}')}
    \quad \mathbf{l} \mapsto \mathbf{l}'
\]
\[			
    \mathbf{l}'(e_m)=			
    \begin{cases}
        \mathbf{l}(e_m) & e_m \ne e_{kl} \\
        \arcch f & e_m = e_{kl}		
    \end{cases}
\]
where $a=\cosh \mathbf{l}(e_{ki}),b=\cosh \mathbf{l}(e_{il}),c=\cosh \mathbf{l}(e_{lj}),d=\cosh \mathbf{l}(e_{jk}),e=\cosh \mathbf{l}(e_{ij})$ and
\begin{equation}\label{f}
    f = \frac{ab + cd + ace + bde + \sqrt {\Delta_{ade}} \sqrt {\Delta _{bce}}}{e^2-1}.
\end{equation}

For any two different triangulation of $(S,V)$, we can transform from one to another by finite steps of flipping, and the corresponding transform map is the composition of the transform maps with respect to the flipping.
\end{thm}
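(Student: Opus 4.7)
The plan is to lift the hinge $\Diamond_{ij;kl}$ to a single chart in $\mathbb{H}^2$ and carry out the computation in Minkowski linear algebra. The four boundary geodesics $\gamma_i, \gamma_j, \gamma_k, \gamma_l$ involved in the hinge lift to four pairwise ultraparallel geodesics in $\mathbb{H}^2$, which I would encode in the hyperboloid model $\mathbb{H}^2 \hookrightarrow \mathbb{R}^{2,1}$ by their spacelike unit normals $n_i, n_j, n_k, n_l$. The key fact is that for two ultraparallel geodesics with normals $n, n'$, the length $p$ of their common perpendicular satisfies $|\langle n, n'\rangle| = \cosh p$, where $\langle \cdot, \cdot \rangle$ is the Lorentzian form of signature $(2,1)$.

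First I would fix the signs of the six signed inner products $\langle n_p, n_q\rangle$, which are a priori $\pm a, \pm b, \pm c, \pm d, \pm e, \pm f$, by orienting each $n_p$ coherently (for example, so that each points out of the compact octagonal region bounded by the four geodesics). With this convention the Gram matrix $G = (\langle n_p, n_q\rangle)_{p,q \in \{i,j,k,l\}}$ encodes all six length data. Since $n_i, n_j, n_k, n_l$ lie in the three-dimensional space $\mathbb{R}^{2,1}$, they are linearly dependent, hence $\det G = 0$; this vanishing is the \emph{generalized Ptolemy relation} among $a, b, c, d, e, f$.

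Because the unknown $f$ appears only in the two symmetric entries $G_{kl} = G_{lk}$, the equation $\det G = 0$ is a quadratic in $f$. Direct expansion yields
\[
(e^2-1)\, f^2 \;-\; 2(ab + cd + ace + bde)\, f \;+\; C(a,b,c,d,e) \;=\; 0,
\]
whose discriminant simplifies to $4\,\Delta_{ade}\,\Delta_{bce}$, matching the two faces $f_{ijk}$ and $f_{ijl}$ of the original hinge with cosine triples $(a,d,e)$ and $(b,c,e)$ respectively. Solving the quadratic gives two roots $f_\pm$ of exactly the stated form, and I would single out the $+$ root as the flipped length by testing a symmetric configuration such as $a = b$, $c = d$, in which the two diagonals of the octagon are directly comparable.

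The main obstacle will be careful sign bookkeeping in the Gram matrix under the coherent orientation of normals, since different sign conventions for the Lorentzian form or for the side of each geodesic flip signs in the matrix and thereby swap $f_+ \leftrightarrow f_-$. The $4 \times 4$ determinant expansion itself and the recognition of the $\Delta_{ade}\Delta_{bce}$ factorization are lengthy but purely mechanical, and could be verified by computer algebra. The final claim about composition of transform maps over general triangulation changes follows immediately from the Hatcher--Mosher--Penner connectivity theorem cited in the preliminaries, together with real analyticity of each local transform.
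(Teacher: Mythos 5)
Your proposal is sound but follows a genuinely different route from the paper. The paper works intrinsically in the hinge: it applies the right-angled-hexagon cosine law \eqref{cosh} to the two faces $f_{ijk}$ and $f_{ijl}$ to compute the two boundary arcs $\alpha,\beta$ on $\partial D_i$, observes that $\sinh^2\alpha$ and $\sinh^2\beta$ produce the discriminants $\Delta_{ade}$ and $\Delta_{bce}$, and then reads off $f$ from $\cosh(\alpha+\beta)=\cosh\alpha\cosh\beta+\sinh\alpha\sinh\beta$ together with the hexagon law on the flipped face; since $\alpha,\beta>0$ force the positive square roots, the sign of $\sqrt{\Delta_{ade}}\sqrt{\Delta_{bce}}$ comes out automatically and no branch selection is needed. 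You instead pass to the hyperboloid model and impose $\det G=0$ on the $4\times4$ Gram matrix of the four spacelike normals. With the coherent (all-outward) orientation the off-diagonal entries are $-a,\dots,-f$, and one checks that $\det G=0$ is exactly the generalized Ptolemy relation \eqref{abcdef}; so your method delivers the paper's Corollary item (3) for free and is more structural, at the price of producing a quadratic in $f$ with two roots. Your branch-selection step needs one more sentence to be airtight: since the discriminant equals $4\Delta_{ade}\Delta_{bce}>16$ on all of $(1,\infty)^5$ (by the paper's proposition on $\Delta$), the two roots never collide, and since $\Omega_{\mathcal{T}}$ is a homeomorphism from $\mathbb{R}_{>0}^{E(\mathcal{T})}$ every tuple $(a,b,c,d,e)\in(1,\infty)^5$ is realized by an actual hinge; the true flipped length is a continuous function on this connected set agreeing at each point with exactly one of the two continuous, everywhere-distinct roots, hence with the same root globally, so a single test configuration does determine the branch. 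With that continuity-and-connectedness remark added, and the Lorentzian sign conventions fixed once, your argument is complete; the closing reduction of the general case to finitely many flips is the same in both treatments.
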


To make the formula clear, here we use $d$ and $e$ without subscript to represent the hyperbolic cosine of the lengths instead of metrics or edges. See Figure \ref{fig:4hyp}.

\begin{figure}[ht]
\centering
\includegraphics{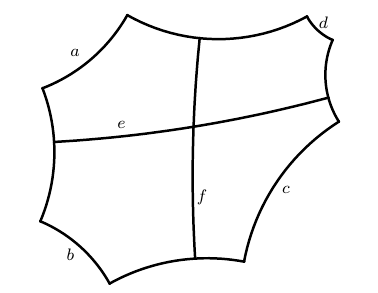}
\caption{Notation in a hyperbolic right angle octagon related with a flipping hinge.}
\label{fig:4hyp}
\end{figure}    

\begin{proof}		
We only need to prove that for any hinge $\Diamond_{ij;kl} \subset \Sigma$ with a given hyperbolic metric, the formula \eqref{f} holds. Denote the length of geodesic segment on $\partial D_i$ between $e_{ik}$ and $e_{ij}$ by $\alpha$ and the length of geodesic segment on $\partial D_i$ between $e_{ij}$ and $e_{il}$ by $\beta$, and apply the formula \eqref{cosh} on $f_{ijk},f_{ijl}$, then we have
\[
    \cosh \alpha = \frac{ae+d}{\sqrt{a^2-1}\sqrt{e^2-1}}\quad
    \cosh \beta = \frac{be+c}{\sqrt{b^2-1}\sqrt{e^2-1}}.
\]

Therefore,
\[
    \begin{aligned}
        \sinh^2 \alpha=&\cosh^2 \alpha-1\\
        =&\frac{(ae+d)^2-(a^2-1)(e^2-1)}{(a^2-1)(e^2-1)}\\
        =&\frac{2ade+d^2+a^2+e^2-1}{(a^2-1)(e^2-1)}\\
        =&\frac{\Delta_{ade}}{(a^2-1)(e^2-1)},\\
        \sinh^2 \beta=&\frac{\Delta_{bce}}{(b^2-1)(e^2-1)}.
    \end{aligned}		
\]

Note that $\alpha,\beta>0$, and the square root on both sides are
\[
    \sinh \alpha = \frac{\sqrt{\Delta_{ade}}}{\sqrt{a^2-1}\sqrt{e^2-1}}\quad
    \sinh \beta = \frac{\sqrt{\Delta_{bce}}}{\sqrt{b^2-1}\sqrt{e^2-1}}
\]

From $\cosh(\alpha+\beta) = \cosh\alpha\cosh\beta+\sinh\alpha\sinh\beta$ and the formula \eqref{cosh} applying on hyperbolic right angle hexagon, we have
\[
    \cosh(\alpha+\beta)=\frac{(ae+d)(be+c)+\sqrt{\Delta_{ade}}\sqrt{\Delta_{bce}}}
    {\sqrt{a^2-1}\sqrt{b^2-1}(e^2-1)}
    =\frac{ab+f}{\sqrt{a^2-1}\sqrt{b^2-1}}.
\]

The formula \eqref{f} holds while both sides times $\sqrt{a^2-1}\sqrt{b^2-1}$ and minus $ab$. For any $a,b,c,d,e>1$, the formula \eqref{f} is an elementary function and an analytic function as well. Thus, the mapping $\Omega_{\mathcal{T}'}^{-1} \circ \Omega_\mathcal{T}$ is real analytic, and homeomorphic as well by Theorem \ref{teich home}.

\end{proof}

\begin{cor}	
Consider the formula \eqref{f} ignoring the meaning of the variables. Then
\begin{enumerate}
\item when $a,b,c,d,e>1$, we have $f>1$. Furthermore, fixing $a,b,c,d>1$ as constants, regarding $f$ as a function of the single variable $e$ like $f=f(e)$, then $f(1+)=+\infty$, $f(+\infty)=1$, and $f$ is strictly monotonically decreasing on $(1,+\infty)$.

\item By calculating directly, we have
    \begin{equation}\label{delta}
        \begin{aligned}
        \sqrt{\Delta _{a b f}}&=\frac{(d+a e) \sqrt{\Delta _{b c e}}+(c+b e) \sqrt{\Delta _{a d e}}}{e^2-1},\\	
        \sqrt{\Delta _{c d f}}&=\frac{(a+d e) \sqrt{\Delta _{b c e}}+(b+c e) \sqrt{\Delta _{a d e}}}{e^2-1}.
        \end{aligned}
    \end{equation}

\item $a,b,c,d,e,f$ satisfy the equation
    \begin{equation}\label{abcdef}
        \begin{aligned}
            a^2 + b^2 + c^2 + d^2 + e^2 + f&^2\\
            + 2 (ade + bce + abf + cdf + abcd + acef + bdef&)\\
            - a^2 c^2 - b^2 d^2 - e^2 f^2 - 1& = 0,
        \end{aligned}
    \end{equation}
    which can be regard as a \emph{generalized Ptolemy equation}.

\item The formula \eqref{f} satisfy the pentagon relation.
\end{enumerate}		
\end{cor}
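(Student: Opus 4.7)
The plan is to derive each of the four items from the explicit formula \eqref{f} for $f$, together with the geometry of the hyperbolic right-angled octagon in Figure \ref{fig:4hyp} that already underpinned Theorem \ref{teich home formula}.

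For (1), the inequality $f>1$ follows immediately from Theorem \ref{teich home formula}, which identifies $f$ with $\cosh$ of a positive geodesic length (alternatively, one can check directly that the numerator of \eqref{f} exceeds $e^2-1$ whenever $a,b,c,d,e>1$). The limit $f(1^+)=+\infty$ is immediate since the numerator is bounded below by $ab+cd>0$ while the denominator tends to $0^+$. For $f(+\infty)=1$, I would use the expansions $\sqrt{\Delta_{ade}}=e+ad+O(1/e)$ and $\sqrt{\Delta_{bce}}=e+bc+O(1/e)$ to see that the numerator grows like $e^2$, matching the denominator. Strict monotonic decrease I would establish by implicit differentiation of the Ptolemy relation \eqref{abcdef} (proved in (3) below) with respect to $e$ holding $a,b,c,d$ fixed; substituting \eqref{f} back in, the denominator of $\partial f/\partial e$ collapses to $-\sqrt{\Delta_{ade}\Delta_{bce}}<0$, and positivity of the numerator reduces to a single quadratic inequality in $f$ that is automatically satisfied by any $f$ produced by \eqref{f}.

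For (2), I would return to the derivation of \eqref{f} itself, which used $\cosh(\alpha+\beta)=(ab+f)/\sqrt{(a^2-1)(b^2-1)}$ at vertex $i$ together with the previously computed expressions for $\sinh\alpha,\cosh\alpha,\sinh\beta,\cosh\beta$. Applying the $\sinh$ addition formula to the arcs $\alpha,\beta$ and comparing with the analogous identity $\sinh(\alpha+\beta)=\sqrt{\Delta_{abf}}/\sqrt{(a^2-1)(b^2-1)}$ for the new triangle $f_{kli}$ yields the first line of \eqref{delta}. Repeating the computation at vertex $j$, using the boundary arcs $\gamma,\delta$ of the pre-flip triangles $f_{ijk},f_{ijl}$ and the corresponding identity for $f_{klj}$, delivers the second line.

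For (3), I would rearrange \eqref{f} to
\[
(e^2-1)f-(ab+cd+ace+bde)=\sqrt{\Delta_{ade}\Delta_{bce}}
\]
and square both sides. After expanding $\Delta_{ade}\Delta_{bce}$ and the left-hand-side polynomial, collecting every term on one side produces precisely the expression in \eqref{abcdef}. For (4), rather than carrying out a direct five-fold algebraic computation, I would appeal to Theorem \ref{teich home formula}: because $Teich(\Sigma)$ is a well-defined manifold and the chart transition map $\Omega_{\mathcal{T}'}^{-1}\circ\Omega_{\mathcal{T}}$ is the composition of the individual flip transforms, any closed loop in the flip graph must correspond to the identity map, and the pentagon relation is exactly the five-flip loop on a pentagonal star configuration.

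The main obstacle is the polynomial expansion in (3): on the order of forty monomials in six variables must be tracked without error, and I would rely on the symmetries $(a,b,c,d,e,f)\mapsto(c,d,a,b,e,f)$ and $(a,b,c,d,e,f)\mapsto(b,a,d,c,e,f)$, which reflect the symmetries of the flipped hinge, as bookkeeping devices. Parts (1), (2) and (4) all reduce to transparent applications of hyperbolic trigonometry or the well-definedness of Teichm\"uller coordinates.
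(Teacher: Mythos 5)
The paper states this corollary without a written proof (item (2) is introduced only with ``by calculating directly''), and your proposal fills that in along exactly the lines the paper implies: items (2) and (3) by direct computation rooted in the hyperbolic trigonometry already set up in the proof of Theorem \ref{teich home formula}, and item (4) by the well-definedness of the charts $\Omega_{\mathcal T}$ from Theorem \ref{teich home}. Your derivation of \eqref{delta} via $\sinh(\alpha+\beta)=\sinh\alpha\cosh\beta+\cosh\alpha\sinh\beta$ compared against $\sinh(\alpha+\beta)=\sqrt{\Delta_{abf}}/(\sqrt{a^2-1}\sqrt{b^2-1})$ for the post-flip triangle is correct and arguably cleaner than a blind expansion; squaring $(e^2-1)f-(ab+cd+ace+bde)=\sqrt{\Delta_{ade}\Delta_{bce}}$ does reproduce \eqref{abcdef} (up to the overall factor $-(e^2-1)$, which you should state explicitly so the reader can match coefficients); and the limits $f(1^+)=+\infty$, $f(+\infty)=1$ are verified correctly. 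For $f>1$ the purely algebraic check is worth making explicit: $\sqrt{\Delta_{ade}}\sqrt{\Delta_{bce}}>e^2-1$ since each factor under the root exceeds $e^2-1$, so the numerator of \eqref{f} already exceeds the denominator.

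The one genuine gap is the monotonicity claim in (1). Implicit differentiation of $Y(a,b,c,d,e,f)=0$ gives $\partial f/\partial e=-Y_e/Y_f$, and substituting \eqref{f} does collapse $Y_f$ to $-2\sqrt{\Delta_{ade}\Delta_{bce}}<0$ as you say. But you then need $Y_e<0$, i.e.\ $e(f^2-1)>ad+bc+(ac+bd)f$, and this is \emph{not} ``automatically satisfied'' by $f>1$: it asserts that $f$ lies above the larger root of a quadratic whose larger root can itself exceed $1$. What actually closes the argument is the identity $e(f^2-1)-(ad+bc)-(ac+bd)f=\sqrt{\Delta_{abf}}\sqrt{\Delta_{cdf}}>0$, which is the statement that the flip is an involution (the pre-flip diagonal $e$ is recovered from $a,b,c,d,f$ by the analogue of \eqref{f} with the \emph{positive} square root). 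This identity is not free; it follows either from multiplying out the two lines of \eqref{delta} that you prove in item (2), or from the geometric realization in Theorem \ref{teich home formula} together with uniqueness of the chart transition. You should insert that step explicitly; with it, $\partial f/\partial e=-\sqrt{\Delta_{abf}\Delta_{cdf}}/\sqrt{\Delta_{ade}\Delta_{bce}}<0$ and the strict decrease follows.
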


\section{Weighted Delaunay inequality}\label{sec:del}

\subsection{Orthogonal circle formula}

Given an inversive distance function on edges $\mathbf{I} \in \mathbb{R}_{>1}^{E(\mathcal{T})}$ with respect to the triangulation $\mathcal{T}$ and a weight function $\mathbf{r} \in \mathbb{R}_{>0}^V$ as well on a marked surface $(S,V)$, we can construct a piecewise flat metric $d_f \in \Phi_\mathcal{T} \circ L_f(\mathbf{I},\mathbf{r})$. 

Firstly, we consider the inversive distance on a triangle face $f_{ijk}$. We denote that $a=I_{jk},b=I_{ki},c=I_{ij}$ of the face $f_{ijk}$ embedded in $\mathbb{E}^2$, and there exist a unique circle $\odot O_{ijk}$ orthogonal to the circles with centers of $v_\alpha$ and radii of $r_\alpha$ where $\alpha=i,j,k$. Denote the radius of $\odot O_{ijk}$ by $\rho_{ijk}$, then,

\begin{thm}\label{radius_area}
    \[
        r_ir_jr_k\sqrt{\Delta_{abc}}=2\rho_{ijk}\area(f_{ijk})
    \]
    holds when the face $f_{ijk}$ of inversive distance circle packing $L_f(\mathbf{I},\mathbf{r})$ satisfies the triangle inequality.
\end{thm}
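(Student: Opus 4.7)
I would compute $\rho_{ijk}$ by solving directly for the orthogonal centre inside the flat triangle. Embed $f_{ijk}$ in Euclidean coordinates with $v_i=(0,0)$, $v_j=(l_{ij},0)$ and $v_k=(x_k,y_k)$, $y_k>0$, so that $2\,\area(f_{ijk})=l_{ij}\,y_k$. Writing $O_{ijk}=(x_0,y_0)$, the three orthogonality conditions $|O_{ijk}-v_\alpha|^2=r_\alpha^2+\rho_{ijk}^2$ subtract in pairs to eliminate $\rho_{ijk}^2$; after substituting the inversive-distance relations $l_{\alpha\beta}^2 = r_\alpha^2 + r_\beta^2 + 2 I_{\alpha\beta}\,r_\alpha r_\beta$ they reduce to the linear system
\[
l_{ij}\,x_0 = r_i(r_i + c\,r_j), \qquad x_k\,x_0 + y_k\,y_0 = r_i(r_i + b\,r_k),
\]
so $(x_0,y_0)$ is explicit and $\rho_{ijk}^2 = x_0^2 + y_0^2 - r_i^2$.

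Since both sides of the claim are nonnegative (with $\area(f_{ijk})>0$ for a valid triangle and $\Delta_{abc}>4$ from the preceding proposition), it suffices to verify the squared identity
\[
l_{ij}^2\,y_k^2\,(x_0^2 + y_0^2 - r_i^2) = r_i^2\,r_j^2\,r_k^2\,\Delta_{abc}.
\]
Substituting the closed forms for $x_0,y_0$ and using $x_k^2 + y_k^2 = l_{ik}^2$, the left-hand side collects as
\[
r_i^2\bigl\{X^2 l_{ik}^2 + l_{ij}^2(Y^2 - y_k^2) - 2 l_{ij}\,x_k\,XY\bigr\},\qquad X := r_i + c\,r_j,\ \ Y := r_i + b\,r_k,
\]
so the $r_i^2$ factor exits cleanly. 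The elementary simplifications $X^2 - l_{ij}^2 = r_j^2(c^2-1)$ and $Y^2 - l_{ik}^2 = r_k^2(b^2-1)$, together with the cosine-law relation $2 l_{ij}\,x_k = l_{ij}^2 + l_{ik}^2 - l_{jk}^2$ and the third inversive-distance relation for $l_{jk}^2$, reduce the bracket to a polynomial in $r_j,r_k,a,b,c$ which expands to $r_j^2\,r_k^2\,\Delta_{abc}$.

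The main obstacle is this last polynomial bookkeeping: the coordinate choice artificially privileges $v_i$, so one has to check that the apparent asymmetry of the intermediate expressions collapses into the fully symmetric factor $\Delta_{abc} = a^2 + b^2 + c^2 + 2abc - 1$. A useful organising device is the Heron-type identity $4 l_{ij}^2 l_{ik}^2 - (l_{ij}^2 + l_{ik}^2 - l_{jk}^2)^2 = 16\,\area(f_{ijk})^2$, which lets one trade the explicit area contribution for a purely combinatorial expression in the edge-length squares before substituting the inversive-distance relations. The cyclic symmetry in $(i,j,k)$ then serves as a sanity check on the expansion, since any term surviving the cancellation must have two cyclic partners of the same shape after relabelling.
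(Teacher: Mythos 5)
Your proposal is correct and is essentially the same argument as the paper's: a direct Euclidean coordinate computation that reduces the identity to a polynomial verification in the radii and inversive distances. The only difference is the normalization — the paper places the origin at the orthogonal center $O_{ijk}$, so that each $I_{\alpha\beta}$ takes the symmetric form $(\rho_{ijk}^2-x_\alpha x_\beta-y_\alpha y_\beta)/\sqrt{(x_\alpha^2+y_\alpha^2-\rho_{ijk}^2)(x_\beta^2+y_\beta^2-\rho_{ijk}^2)}$ and $\Delta_{abc}$ factors immediately into the squared area times $\rho_{ijk}^2/(r_i^2r_j^2r_k^2)$, whereas you anchor at $v_i$ and solve the linear system for $O_{ijk}$, paying for the trivial linear algebra with the asymmetric bookkeeping you already flag; both routes are valid and end at the same squared identity.
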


\begin{proof}
    By establishing a Euclidean coordinate chart with the original point at the center of $\odot O_{ijk}$, denote the coordinate of vertices $v_\alpha=(x_\alpha,y_\alpha)$, then the inversive distances are
    
    \[
        I_{\alpha\beta}=\frac{(x_\alpha-x_\beta)^2+(y_\alpha-y_\beta)^2
        -r_\alpha^2-r_\beta^2}{2r_\alpha r_\beta}=			
        \frac{\rho_{ijk}^2-x_\alpha x_\beta-y_\alpha y_\beta}
        {\sqrt{x_\alpha^2+y_\alpha^2-\rho_{ijk}^2}
        \sqrt{x_\beta^2+y_\beta^2-\rho_{ijk}^2}}
    \]		
    where $\alpha \ne \beta \in \{i,j,k\}$. We can calculate that
    \[
        \Delta_{abc}=\frac{(x_iy_j-x_jy_i+x_jy_k-x_ky_j+x_ky_i-x_iy_k)^2
        \rho_{ijk}^2}{(x_i^2+y_i^2-1)(x_j^2+y_j^2-1)(x_k^2+y_k^2-1)}
        =\frac{4\area^2(f_{ijk})\rho_{ijk}^2}{r_i^2r_j^2r_k^2}
    \]
    and finish the proof by taking the square root at both side.
\end{proof}

\subsection{Triangle inequality}

Secondly, we consider the inversive distance on a hinge $\Diamond_{ij;kl}$ embedded in $\mathbb{E}^2$. In this subsection, in case of too many subscripts, we denote $P=v_k,Q=v_i,R=v_l,S=v_j$ for vertices, $p=r_k,q=r_i,r=r_l,s=r_j$ for radii, $\odot P,\odot Q,\odot R,\odot S$ for circles, and $a=I_{ki},b=I_{il},c=I_{lj},d=I_{jk},e=I_{ij}$ for inversive distances. If there is only one edge between two vertices without confusion, like $e_{ij}$ between $P$ and $Q$, then denote its length by $PQ$, etc.

\begin{figure}[ht]
    \centering
    \includegraphics{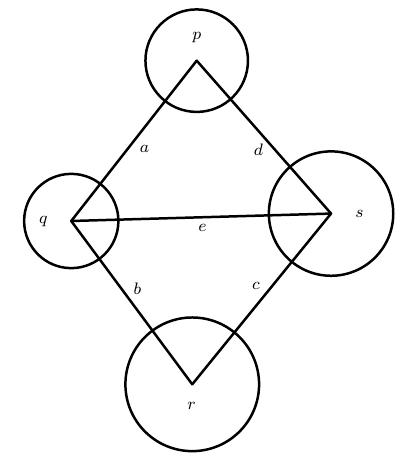}
    \caption{Notation of radii and inversive distances on a hinge}
    \label{fig:cir4}
\end{figure}

\begin{defn}\label{dft}

    Let $f$ satisfy the formula \eqref{f}, the \emph{local weighted Delaunay inequality} for the edge $e_{ij}$ is defined (in Euclidean case) by
    \begin{equation}\label{ineq_f}
        \frac{\sqrt {\Delta _{bce}}}{p} + \frac{\sqrt {\Delta _{ade}}}{r}\le
        \frac{\sqrt {\Delta _{cdf}}}{q} + \frac{\sqrt {\Delta _{abf}}}{s}
    \end{equation}

    The set of $(\mathbf{I},\mathbf{r})$ satisfied the inequality for edge $e_{ij}$ of the triangulation $\mathcal{T}$, denote by $D_f(e_{ij};\mathcal{T})$, is a closed set in $\mathbb{R}_{>1}^{E(\mathcal{T})} \times \mathbb{R}_{>0}^V$. 
    
    The intersection of all these $\left|E\right|$ number of closed sets with respect to $\mathcal{T}$ is denoted by
        \begin{equation}\label{ineq_f2}
        D_f(\mathcal{T}) \coloneqq \bigcap_{e_{ij} \in E(\mathcal{T})}
        D_f(e_{ij};\mathcal{T}) \subset \mathbb{R}_{>1}^{E(\mathcal{T})}
        \times \mathbb{R}_{>0}^V.
    \end{equation}

\end{defn}

\begin{lem}\label{trig_eq_lem}
	With the notation of symbols in this subsection on the hinge $\Diamond_{ij;kl}$, when inversive distances $a,b,c,d,e>1$ and radii $p,q,r,s>0$ satisfy the local weighted Delaunay inequality \eqref{ineq_f}, if the edge lengths $PQ,QR,RS,SP,QS$ calculated from $L_f$ satisfy $QR<QS+SR$ and $SR<QS+QR$, then $PQ+PS>QS$ holds.
\end{lem}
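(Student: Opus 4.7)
The plan is to reformulate both the target and the hypotheses as polynomial inequalities in the radii and inversive distances and then combine them. First, squaring $PQ + PS > QS$ and expanding via $L_f$ reduces the target to $PQ \cdot PS > M$ with $M := eqs - p^2 - p(aq+ds)$, which is automatic whenever $M \le 0$. In the remaining case $M > 0$, it is equivalent to $P := (PQ \cdot PS)^2 - M^2 > 0$, and a Heron-type identity shows $P = 4 \area(f_{ijk})^2$, so $P > 0$ is exactly the condition that $f_{ijk}$ be a valid Euclidean triangle. Viewing $P$ as a quadratic in $p$, $P(p) = A p^2 + 2 B p + C$ with $A = (1-a^2)q^2 + (1-d^2)s^2 + 2(ad+e)qs$, $B = qs[(d+ae)q + (a+de)s]$ and $C = -q^2 s^2(e^2-1)$, a direct expansion yields the clean discriminant identity $B^2 - A C = q^2 s^2 \Delta_{ade} \cdot QS^2$, which makes the critical roots $p_\pm$ separating the sign of $P(p)$ fully explicit.

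Next I would rewrite the hypotheses. The two given triangle inequalities for $f_{ijl}$ combine to $(QR - SR)^2 < QS^2$, which via $L_f$ simplifies to $QR \cdot SR > -\tilde M$ with $\tilde M := eqs - r^2 - r(bq+cs)$, the analogue of $M$ for face $f_{ijl}$. The local weighted Delaunay inequality \eqref{ineq_f}, after substituting \eqref{delta} to eliminate $f$, multiplying through by $qs(e^2-1)$, and regrouping via $u := s+eq$ and $v := q+es$, becomes
\[
    \sqrt{\Delta_{bce}}\left[\frac{(e^2-1)qs}{p} - au - dv\right] + \sqrt{\Delta_{ade}}\left[\frac{(e^2-1)qs}{r} - bu - cv\right] \le 0,
\]
an inequality purely in $a, b, c, d, e, p, q, r, s$ that couples the two ``tip'' radii $p$ and $r$ through the $\sqrt{\Delta}$ weights.

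Finally I would combine the pieces: the hypothesis on $f_{ijl}$ bounds the $\sqrt{\Delta_{ade}}$-bracket from above, forcing via the rewritten local weighted Delaunay inequality the $\sqrt{\Delta_{bce}}$-bracket to be sufficiently negative, which via the discriminant identity $B^2 - A C = q^2 s^2 \Delta_{ade} QS^2$ translates to $p > p_+$ and hence $P(p) > 0$. The main obstacle will be cleanly managing the $\sqrt{\Delta}$-weighted terms, which resist direct substitution; a likely simplification is to normalize $QS = 1$ and invoke the generalized Ptolemy equation \eqref{abcdef} as a polynomial constraint producing the needed cancellations, or equivalently to parametrize $a = \cosh x$, etc., under which the identity takes the symmetric form used in the proof of Theorem \ref{teich home formula}.
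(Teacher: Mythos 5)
Your setup is sound and, perhaps without realizing it, reproduces the paper's own machinery: the smaller positive root of your quadratic $P(p)=Ap^2+2Bp+C$ is exactly the quantity $p_0=\frac{(e^2-1)qs}{QS\sqrt{\Delta_{ade}}+(ae+d)q+(de+a)s}$ that the paper's proof introduces, and your discriminant identity $B^2-AC=q^2s^2\Delta_{ade}\,QS^2$ is the same computation the paper performs to verify $G(p_0)=0$; your rewriting of \eqref{ineq_f} via \eqref{delta} is also the correct elimination of $f$. One point in the reduction needs care: when $A<0$ the parabola $P$ opens downward, so ``$p>p_+$ hence $P(p)>0$'' is wrong as stated; you must first note that in the subcase $M>0$ the inequality $eqs>p^2+apq+dps$ forces the other two triangle inequalities $QS+PQ>PS$ and $QS+PS>PQ$ to hold automatically, so only the smaller root matters and $P(p)>0\iff p>p_0$.

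The genuine gap is your final paragraph, which is where the entire content of the lemma lives and which you leave as a plan with an acknowledged ``main obstacle.'' Concretely, after dividing the rewritten Delaunay inequality by $\sqrt{\Delta_{bce}}>0$, what you need is precisely the bound $bu+cv-\frac{(e^2-1)qs}{r}<QS\sqrt{\Delta_{bce}}$; chaining it with \eqref{ineq_f} gives $\frac{(e^2-1)qs}{p}-au-dv<QS\sqrt{\Delta_{ade}}$, i.e.\ $p>p_0$. Establishing that bound from the hypotheses $QR<QS+SR$ and $SR<QS+QR$ requires a case split on the sign of $(b^2-1)q^2-2(bc+e)qs+(c^2-1)s^2$, which controls the sign of $bu+cv-QS\sqrt{\Delta_{bce}}$ because the difference of their squares is $(e^2-1)$ times that quantity: when it is nonpositive the bound is trivial since the left side is then at most zero; when it is positive you must extract $r<r_0:=\frac{(e^2-1)qs}{bu+cv-QS\sqrt{\Delta_{bce}}}$ from the Heron quadratic of $f_{ijl}$, which itself requires a sub-split on whether the third, unhypothesized, inequality $QR+RS>QS$ holds. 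None of this is carried out, and the shortcuts you propose instead --- normalizing $QS=1$ or invoking the generalized Ptolemy equation \eqref{abcdef} --- are not the right tools: \eqref{abcdef} plays no role in this lemma, and the identities \eqref{delta} you already used are all the elimination that is needed. Once the missing case analysis is supplied, your argument becomes a direct (contrapositive-free) rearrangement of the paper's proof by contradiction, with the same $p_0$, the same $r_0$, and the same case structure.
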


\begin{proof}
    Suppose that $PQ+PS>QS$ is false, then $PQ+PS \le QS$. Denote that
    \[
        \begin{aligned}
            G(p)\coloneqq&\frac12(PQ+PS-QS)(PQ+PS+QS)\\
            =&p^2+(aq+ds)p-eqs+\sqrt{p^2+2apq+q^2}\sqrt{p^2+2dps+s^2},
        \end{aligned}
    \]
	then $G(0+)=(1-e)qs<0$, $G(+\infty)=+\infty$, and $G$ strictly increases on $\mathbb{R}_{>0}$, so it has a unique zero point $G(p_0)=0$. 
	
	We define that
    \[
        p_0=\frac{(e^2-1)qs}{QS\sqrt{\Delta_{ade}}+(ae+d)q+(de+a)s},
    \]
	where $QS=\sqrt{q^2+s^2+2eqs}$. After simplifying and factoring, we find that
    \[
        (p_0^2+(aq+ds)p_0-eqs)^2=
        (p_0^2+2ap_0q+q^2)(p_0^2+2dp_0s+s^2)
    \]
    holds. Moreover,
    \[	
        \begin{aligned}
            &p_0^2+(aq+ds)p_0-eqs\\
            =&-\frac{qs\left(G'+(aq+2deq+ae^2q+ds+2aes+de^2s)
            QS\sqrt{\Delta_{ade}}\right)}{(QS\sqrt{\Delta_{ade}}
            +(ae+d)q+(de+a)s)^2}<0
        \end{aligned}
    \]
    where
    \[
        \begin{aligned}
            G'=&(ad+2a^2e+2d^2e+3ade^2+e^3-e)q^2\\
            +&(a^2+d^2+4ade+3a^2e^2+3d^2e^2+4ade^3+e^4-1)qs\\
            +&(ad+2a^2e+2d^2e+3ade^2+e^3-e)s^2>0.
        \end{aligned}
    \]
    Therefore,
    \[
        -(p_0^2+(aq+ds)p_0-eqs)=
        \sqrt{(p_0^2+2ap_0q+q^2)(p_0^2+2dp_0s+s^2)},
    \]		
    which is $G(p_0)=0$. Knowing that $G$ is monotonically increasing, we can conclude by $PQ+PS \le QS$ that $0<p \le p_0$.
    
    Next we discuss case by case. 
    For the first case, if $(b^2-1)q^2-2(bc+e)qs+(c^2-1)s^2 \le 0$, then
    \[
        \begin{aligned}
            &(beq+cq+ces+bs)^2-QS^2\Delta_{bce}\\
            =&(e^2-1)((b^2-1)q^2-2(bc+e)qs+(c^2-1)s^2)\le 0,
        \end{aligned}
    \]		
    which is $QS\sqrt{\Delta_{bce}} \ge beq+cq+ces+bs$. Considering the formula \eqref{delta} and $p \le p_0, r>0$, we know that
    \[
        \begin{aligned}
            &\frac{\sqrt{\Delta_{bce}}}{p}+\frac{\sqrt{\Delta_{ade}}}{r}-
            \left(\frac{\sqrt{\Delta_{cdf}}}{q}+\frac{\sqrt {\Delta _{abf}}}{s}\right)\\
            >& \frac{\sqrt{\Delta _{bce}}}{p_0}-
            \left(\frac{\sqrt{\Delta_{cdf}}}{q}+\frac{\sqrt{\Delta_{abf}}}{s}\right)\\
            =&\frac{\sqrt{\Delta_{ade}}}{(e^2-1)qs}(QS\sqrt{\Delta_{bce}}-(beq+cq+ces+bs)) \ge 0.
        \end{aligned}
    \]
    This contradicts the inequality \eqref{ineq_f}.

    For the second case, if $(b^2-1)q^2-2(bc+e)qs+(c^2-1)s^2>0$, then $QS\sqrt{\Delta_{bce}}<beq+cq+ces+bs$. We define that
    \[
         r_0 \coloneqq \frac{(e^2-1)qs}{(be+c)q+(ce+b)s-QS\sqrt{\Delta_{bce}}}
    \]
    which is slightly different from $p_0$.
    
    Similar to the discussion for $G(p)$, when $RQ+RS \le QS$ holds, we have
    \[
        0<r \le \frac{(e^2-1)qs}{QS\sqrt{\Delta_{bce}}+(be+c)q+(ce+b)s} < r_0.
    \]
    
    However, when $RQ+RS>QS$ holds, by simplifying the inequality
    \begin{equation}
        (QR+RS+QS)(QR+RS-QS)(RS+QS-QR)(QR+QS-RS)>0,
    \end{equation}
    we have
    \[
        \begin{aligned}
            -&((b^2-1)q^2-2(bc+e)qs+(c^2-1)s^2)r^2\\
            +&2(beq+cq+ces+bs)qsr-(e^2-1)q^2s^2>0.
        \end{aligned}
    \]
    This is a quadratic inequality of $r$, and the solution is
    \[
        \frac{(e^2-1)qs}{QS\sqrt{\Delta_{bce}}+(be+c)q+(ce+b)s}
        <r<r_0.
    \]
 
    Therefore, in the second case, no matter $RQ+RS \le QS$ or $RQ+RS>QS$, we have $0<r<r_0$ as long as $QR<QS+RS$ and $RS<QS+QR$ hold. Considering that $0<p \le p_0$, we have
    \[
        \frac{\sqrt {\Delta _{bce}}}{p} + \frac{\sqrt {\Delta _{ade}}}{r}>
        \frac{\sqrt {\Delta _{bce}}}{p_0} + \frac{\sqrt {\Delta _{ade}}}{r_0}=
        \frac{\sqrt {\Delta _{cdf}}}{q} + \frac{\sqrt {\Delta _{abf}}}{s}
    \]
    which also contradicts the inequality \eqref{ineq_f}. Thus, $PQ+PS>QS$ holds.
\end{proof}

\begin{thm}\label{del_in_trig_f}
    Given a triangulation $\mathcal{T}$ of a marked surface $(S,V)$, if there is an inversive distance circle packing $(\mathbf{I},\mathbf{r})$ such that the local weighted Delaunay inequality holds on every edge of $\mathcal{T}$, then all the faces satisfy the triangle inequality, and the polyhedral metric is well-defined by the circle packing. Namely,    
    \[
        D_f(\mathcal{T}) \subset Q_f(\mathcal{T}).
    \]
    
\end{thm}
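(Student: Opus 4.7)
The plan is to argue by contradiction using Lemma \ref{trig_eq_lem} as the sole technical input, since all the discriminant computations have already been absorbed into that lemma.

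Suppose $(\mathbf{I},\mathbf{r}) \in D_f(\mathcal{T})$ but the image $\mathbf{l} = L_f(\mathbf{I},\mathbf{r})$ fails some triangle inequality. Call an edge $e_{ij}$ \emph{defective} if there exists an incident face $f_{ijm}$ in which $l_{ij} \ge l_{im}+l_{jm}$, i.e.\ $l_{ij}$ is the side that is too long. The set $B$ of defective edges is non-empty by assumption and finite since $|E(\mathcal{T})|<\infty$, so it admits an edge of maximal length; call it $e_{ij}$, let $f_{ijk}$ be a face witnessing its defectiveness, and let $f_{ijl}$ be the other face adjacent to $e_{ij}$, giving the hinge $\Diamond_{ij;kl}$. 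Under the identifications $P=v_k,\,Q=v_i,\,R=v_l,\,S=v_j$ of Lemma \ref{trig_eq_lem}, defectiveness of $e_{ij}$ in $f_{ijk}$ is exactly $PQ+PS\le QS$.

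The key step is to invoke the contrapositive of Lemma \ref{trig_eq_lem}. Since the Delaunay inequality \eqref{ineq_f} holds on $e_{ij}$ by the assumption $(\mathbf{I},\mathbf{r})\in D_f(\mathcal{T})$ and its conclusion $PQ+PS>QS$ fails, at least one of the two hypotheses $QR<QS+SR$ or $SR<QS+QR$ must fail, i.e.\
\[
l_{il}\ge l_{ij}+l_{jl} \quad\text{or}\quad l_{jl}\ge l_{ij}+l_{il}.
\]
In the first case $e_{il}$ is defective in $f_{ijl}$ with $l_{il}>l_{ij}$ strictly (since $l_{jl}>0$); in the second $e_{jl}$ is defective with $l_{jl}>l_{ij}$. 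Either way we have produced an element of $B$ strictly longer than $e_{ij}$, contradicting maximality. This forces $B=\varnothing$, so every face satisfies the triangle inequality and $(\mathbf{I},\mathbf{r})\in Q_f(\mathcal{T})$.

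The main obstacle, such as it is, is the bookkeeping between the triangulation indices $(i,j,k,l)$ and the symbols $(P,Q,R,S,p,q,r,s,a,b,c,d,e)$ of the lemma, to ensure that the Delaunay inequality \eqref{ineq_f} and the would-be conclusion $l_{ik}+l_{jk}>l_{ij}$ are attached to the same edge $e_{ij}=QS$. For $\Delta$-complex triangulations where the two faces adjacent to $e_{ij}$ may be identified, or where $e_{ij}$ appears twice in a single face, one works in the abstract hinge, in which the lemma applies verbatim; the combinatorial maximum-length argument is unaffected by such identifications because it only uses finiteness of $E(\mathcal{T})$ and the strict inequality $l_{jl}>0$.
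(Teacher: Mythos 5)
Your proposal is correct and is essentially the paper's own argument: both select a maximal-length ``too long'' edge and apply Lemma \ref{trig_eq_lem} to its hinge to reach a contradiction, with your contrapositive phrasing (a failed hypothesis yields a strictly longer defective edge) replacing the paper's two-case analysis of whether the opposite face $f_{ijl}$ satisfies the triangle inequality. The extra remark on $\Delta$-complex identifications is a harmless refinement the paper leaves implicit.
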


\begin{proof}

    Suppose there exists $(\mathbf{I},\mathbf{r}) \in D_f(\mathcal{T})$ such that some faces do not satisfy the triangle inequality with the length of $\mathbf{l}=L_f(\mathbf{I},\mathbf{r})$. Then we pick the longest edge among the faces, denoted by $e_{ij}$. The face on one side of $e_{ij}$ not satisfied the triangle inequality is denoted by $f_{ijk}$, and the face on the other side is denoted by $f_{ijl}$. Then we have $l_{ij}+l_{jk}>l_{ik}$ and $l_{ij}+l_{ik}>l_{jk}$.

    If $f_{ijl}$ satisfies the triangle inequality, from lemma \ref{trig_eq_lem} we know that $l_{ij}<l_{jk}+l_{ik}$, then $f_{ijk}$ satisfies the triangle inequality, which is a contradiction.    
    
    If $f_{ijl}$ does not satisfy the triangle inequality, because of the longest edge length $l_{ij}$, we know that $l_{ij}+l_{jl}>l_{il}$ and $l_{ij}+l_{il}>l_{jl}$. Similarly, $l_{ij}<l_{jk}+l_{ik}$, then $f_{ijk}$ satisfies the triangle inequality, which is also a contradiction.

    Thus, for any $(\mathbf{I},\mathbf{r}) \in D_f(\mathcal{T})$ we know that $L_f(\mathbf{I},\mathbf{r})$ satisfies all the triangle inequality on every face of $\mathcal{T}$, and $L_f(D_f(\mathcal{T})) \subset \mathbb{R}^{E(\mathcal{T})}_\Delta$, which is $D_f(\mathcal{T}) \subset Q_f(\mathcal{T})$.
    
\end{proof}

\subsection{Weighted Delaunay condition for Euclidean case}

\begin{lem}\label{local_del_f_lem}
    With the notations above, Given a hinge $\Diamond_{ij;kl}$ satisfied the triangle inequalities, the edge $e_{ij}$ is local weighted Delaunay if and only if the formula \eqref{ineq_f} holds where $f$ satisfies \eqref{f}.
\end{lem}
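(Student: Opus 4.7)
The plan is to convert the geometric condition $h_{ij,k}+h_{ij,l}\ge 0$ into an explicit algebraic inequality in the data $a,b,c,d,e,p,q,r,s$, and then match it with \eqref{ineq_f} via the identity \eqref{delta}.

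Since the triangle inequalities hold on both faces, I would embed the hinge $\Diamond_{ij;kl}$ in $\mathbb{E}^2$, placing $v_i=(0,0)$, $v_j=(l_{ij},0)$, $v_k$ in the upper half-plane and $v_l$ in the lower half-plane. Solving the orthogonality system $|O_{ijk}-v_\alpha|^2=\rho_{ijk}^2+r_\alpha^2$ for $\alpha\in\{i,j,k\}$ gives the coordinates of $O_{ijk}$; substituting $l_{\alpha\beta}^2=r_\alpha^2+r_\beta^2+2I_{\alpha\beta}r_\alpha r_\beta$ everywhere and simplifying, one obtains
\[
    h_{ij,k}=\frac{qs\bigl(pX-qs(e^2-1)\bigr)}{2\,l_{ij}\,\area(f_{ijk})}, \qquad X\coloneqq (ae+d)q+(a+de)s,
\]
and the analogue for $h_{ij,l}$ with $(p,a,d,X)$ replaced by $(r,b,c,Y)$, where $Y\coloneqq (be+c)q+(b+ce)s$. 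Combined with Theorem \ref{radius_area} this recasts as $h_{ij,k}\, l_{ij}\sqrt{\Delta_{ade}}=\rho_{ijk}\,A_1$ with $A_1\coloneqq (pX-(e^2-1)qs)/p$; in particular $\operatorname{sgn}(h_{ij,k})=\operatorname{sgn}(A_1)$, and similarly $\operatorname{sgn}(h_{ij,l})=\operatorname{sgn}(B_1)$ for $B_1\coloneqq (rY-(e^2-1)qs)/r$.

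The key geometric observation is that both $O_{ijk}$ and $O_{ijl}$ lie on the radical axis of the circles $\odot v_i$ and $\odot v_j$, so they share a common foot of perpendicular $M$ on $e_{ij}$, and hence $\rho_{ijk}^2-h_{ij,k}^2=|Mv_i|^2-q^2=\rho_{ijl}^2-h_{ij,l}^2$. A direct calculation identifies this common constant as $q^2s^2(e^2-1)/l_{ij}^2$, so eliminating $\rho_{ijk}^2$ yields
\[
    h_{ij,k}^2=\frac{q^2s^2(e^2-1)\,A_1^2}{l_{ij}^2\bigl(l_{ij}^2\Delta_{ade}-A_1^2\bigr)},
\]
and likewise for $h_{ij,l}^2$ in terms of $B_1$ and $\Delta_{bce}$. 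A short sign-based case analysis then shows $h_{ij,k}+h_{ij,l}\ge 0 \iff A_1\sqrt{\Delta_{bce}}+B_1\sqrt{\Delta_{ade}}\ge 0$: if $A_1,B_1$ share a sign the statement is immediate, while if $A_1>0>B_1$ (the opposite case being symmetric) one squares $h_{ij,k}\ge -h_{ij,l}$, cross-multiplies using the closed forms above, watches the $A_1^2B_1^2$ terms cancel, and is left with $A_1^2\Delta_{bce}\ge B_1^2\Delta_{ade}$. Finally, applying \eqref{delta} to $\sqrt{\Delta_{abf}}$ and $\sqrt{\Delta_{cdf}}$ gives the identity
\[
    \frac{\sqrt{\Delta_{cdf}}}{q}+\frac{\sqrt{\Delta_{abf}}}{s}-\frac{\sqrt{\Delta_{bce}}}{p}-\frac{\sqrt{\Delta_{ade}}}{r}=\frac{A_1\sqrt{\Delta_{bce}}+B_1\sqrt{\Delta_{ade}}}{(e^2-1)\,qs},
\]
so non-negativity of the right-hand side is exactly the condition \eqref{ineq_f}.

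The main obstacle will be the bookkeeping: the coordinate derivation of $h_{ij,k}$ goes through several layers of substitution, the clean identification $|Mv_i|^2-q^2=q^2s^2(e^2-1)/l_{ij}^2$ requires tracking a nontrivial cancellation arising from the radical-axis constraint, and the $A_1^2B_1^2$ cancellation in the opposite-sign case must be verified. Once these identities are in place, the case analysis and the application of \eqref{delta} are routine.
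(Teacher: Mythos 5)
Your proposal is correct and follows essentially the same route as the paper: the same explicit formulas for $h_{ij,k},h_{ij,l}$, the same use of Theorem \ref{radius_area}, the same key fact that $O_{ijk}$ and $O_{ijl}$ have equal power with respect to $\odot v_i$ and $\odot v_j$ (the paper phrases this as the two orthogonal circles sharing a chord of half-length $h$ on $e_{ij}$), and the same reduction to \eqref{ineq_f} via \eqref{delta}; your squaring case analysis is just the hands-on version of the paper's observation that $x/\sqrt{x^2+h^2}$ is an increasing odd function. A minor bonus of your write-up is that you compute the common power explicitly as $q^2s^2(e^2-1)/l_{ij}^2>0$, which justifies the positivity of $h^2$ that the paper uses implicitly.
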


\begin{proof}
    Substituting \eqref{delta} into \eqref{ineq_f}, we can simplify as follows
    \[
        \begin{aligned}
            &((be+c)qr+(ce+b)rs-(e^2-1)qs)p\sqrt{\Delta_{ade}}\\
            +&((ae+d)pq+(de+a)ps-(e^2-1)qs)r\sqrt{\Delta_{bce}} \ge 0.
        \end{aligned}
    \]
    Recall that $h_{ij,k}$ and $h_{ij,l}$ are the distance with sign between $e_{ij}$ and the centers of circles, and we can calculate that
    \[
        \begin{aligned}
            h_{ij,k}&=\frac{qs((ae+d)pq+(de+a)ps-(e^2-1)qs)}
            {2QS\,Area(f_{ijk})}\\
            h_{ij,l}&=\frac{qs((be+c)qr+(ce+b)rs-(e^2-1)qs)}
            {{2QS\,Area(f_{ijl})}}.
        \end{aligned}
    \]
    By Theorem \ref{radius_area}, we know that
    \[
        \begin{aligned}
            h_{ij,k}&=\frac{\rho_{ijk}((ae+d)pq+(de+a)ps-(e^2-1)qs)}
            {p\sqrt{q^2+2eqs+s^2}\sqrt{\Delta_{ade}}}\\
            h_{ij,l}&=\frac{\rho_{ijl}((be+c)qr+(ce+b)rs-(e^2-1)qs)}
            {r\sqrt{q^2+2eqs+s^2}\sqrt{\Delta_{bce}}}.
        \end{aligned}
    \]
    Therefore, the local weighted Delaunay inequality \eqref{ineq_f} is equivalent to
    \begin{equation}\label{h_over_rho}
        \frac{h_{ij,k}}{\rho_{ijk}}+\frac{h_{ij,l}}{\rho_{ijl}} \ge 0.
    \end{equation}

    From the simple Euclidean geometry relationship, the intersection points of the orthogonal circles $\odot O_{ijk}$ and $\odot O_{ijl}$ lies on $e_{ij}$. Denote the length of the intersection segment by $2h$, then $\rho_{ijk}=\sqrt{h^2+h_{ij,k}^2}$ and $\rho_{ijl}=\sqrt{h^2+h_{ij,l}^2}$. Since $\frac{x}{\sqrt{x^2 + h^2}}$ is a strictly increasing odd function, we know that the inequality \eqref{ineq_f} is equivalent to
    \[
        h_{ij,k} + h_{ij,l} \ge 0,
    \]
    which is local weighted Delaunay in Definition \ref{loc_del}.
\end{proof}

\begin{thm}\label{whole_del_f}
    The triangulation $\mathcal{T}$ of the piecewise flat surface $(S,V,d_f)$ is weighted Delaunay with respect to the weight $\mathbf{r} \in R$ if and only if there exist $(\mathbf{I},\mathbf{r}) \in D_f(\mathcal{T})$ such that $\Phi_\mathcal{T} \circ L_f(\mathbf{I},\mathbf{r})=[d_f]$.
\end{thm}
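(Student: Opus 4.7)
The plan is to reduce the theorem to a direct composition of Theorem \ref{local_whole_f} and Lemma \ref{local_del_f_lem}. Theorem \ref{local_whole_f} characterizes weighted Delaunayness of $\mathcal{T}$ as the condition $h_{ij,k}+h_{ij,l}\ge 0$ on every edge, while Lemma \ref{local_del_f_lem} identifies this local condition, on any hinge whose two triangles satisfy the triangle inequalities, with the inversive-distance inequality \eqref{ineq_f}. So the proof is essentially a translation between the $h$-formulation (phrased in terms of $d_f$) and the $(\mathbf{I},\mathbf{r})$-formulation, once triangle inequalities are secured on both sides.

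For the forward direction, I would assume $\mathcal{T}$ is weighted Delaunay with respect to $\mathbf{r}\in R$ and produce the required $\mathbf{I}$ by setting
\[
I_{ij}\coloneqq\frac{l_{ij}^2-r_i^2-r_j^2}{2\,r_ir_j},
\]
where $l_{ij}$ is the $d_f$-length of the geodesic edge $e_{ij}$ in the given triangulation. Since any such geodesic edge is a path from $v_i$ to $v_j$, we have $d_f(v_i,v_j)\le l_{ij}$, and the hypothesis $\mathbf{r}\in R$ gives $r_i+r_j<d_f(v_i,v_j)$. Together these force $I_{ij}>1$, so $\mathbf{I}\in\mathbb{R}_{>1}^{E(\mathcal{T})}$, and by construction $L_f(\mathbf{I},\mathbf{r})$ returns the length function of $d_f$, so $\Phi_\mathcal{T}\circ L_f(\mathbf{I},\mathbf{r})=[d_f]$. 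Now Theorem \ref{local_whole_f} delivers $h_{ij,k}+h_{ij,l}\ge 0$ on every edge, and since $d_f$ is a genuine polyhedral metric on $\mathcal{T}$ every hinge satisfies the triangle inequalities, so Lemma \ref{local_del_f_lem} converts this hinge-by-hinge into the inequality \eqref{ineq_f}, i.e.\ $(\mathbf{I},\mathbf{r})\in D_f(\mathcal{T})$.

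For the backward direction, I would start from $(\mathbf{I},\mathbf{r})\in D_f(\mathcal{T})$ with $\Phi_\mathcal{T}\circ L_f(\mathbf{I},\mathbf{r})=[d_f]$ and first invoke Theorem \ref{del_in_trig_f} to conclude $D_f(\mathcal{T})\subset Q_f(\mathcal{T})$, which guarantees that every face satisfies the triangle inequality under the lengths $L_f(\mathbf{I},\mathbf{r})$. This is exactly the hypothesis that licenses Lemma \ref{local_del_f_lem}, which then rewrites \eqref{ineq_f} as $h_{ij,k}+h_{ij,l}\ge 0$ on each edge of $\mathcal{T}$. Theorem \ref{local_whole_f} finishes the argument by upgrading this edgewise inequality to the statement that $\mathcal{T}$ is weighted Delaunay.

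The main obstacle is largely bookkeeping rather than substantive: in the forward direction one must check that $\mathbf{r}\in R$ yields $I_{ij}>1$ for the intrinsic geodesic edges (not merely for straight segments in a chart), and in the backward direction the only nontrivial ingredient is Theorem \ref{del_in_trig_f}, whose role is precisely to supply the triangle inequalities that Lemma \ref{local_del_f_lem} requires as a standing assumption. Once those two points are in place, the two equivalences stack and give the theorem.
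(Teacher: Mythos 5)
Your overall strategy coincides with the paper's: in both directions the theorem is reduced to Theorem \ref{local_whole_f} combined with Lemma \ref{local_del_f_lem}, with Theorem \ref{del_in_trig_f} supplying the triangle inequalities needed to apply the lemma in the backward direction. However, there is a genuine gap in your backward direction: you never verify that $\mathbf{r}\in R$. This matters twice over. First, the conclusion of the theorem is that $\mathcal{T}$ is weighted Delaunay \emph{with respect to the weight $\mathbf{r}\in R$}, and the whole notion of weighted Delaunay (via Definition \ref{del_def} and Theorem \ref{vor_f}) is only set up for weights in $R$. Second, Theorem \ref{local_whole_f}, which you invoke to upgrade the edgewise condition $h_{ij,k}+h_{ij,l}\ge 0$ to global weighted Delaunayness, explicitly assumes $\mathbf{r}\in R$ as a hypothesis; without it the local-to-global step is not licensed. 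The set $D_f(\mathcal{T})$ only constrains $\mathbf{r}$ to lie in $\mathbb{R}_{>0}^V$ and constrains inversive distances along edges of $\mathcal{T}$, so the conditions $r_i+r_j<d_f(v_i,v_j)$ for \emph{all} pairs $i\ne j$ (including non-adjacent ones, where $d_f(v_i,v_j)$ may be much shorter than any edge length) and $0<r_i<\mathrm{Inj}(v_i)$ do not follow formally. The paper closes this with a separate geometric argument: the orthogonal circles $O_1,\dots,O_k$ of the faces around a vertex $v_0$ cover a neighborhood of $\partial Vor_f(v_0)$, each circle centered at another vertex meets them at angle at most $\pi/2$, hence cannot cross $\partial Vor_f(v_0)$, and therefore stays disjoint from the circle at $v_0$. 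You need some argument of this kind.

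A second, more minor point: in the forward direction your derivation of $I_{ij}>1$ from $r_i+r_j<d_f(v_i,v_j)\le l_{ij}$ only works for edges joining distinct vertices. For a loop edge $e_{ii}$ (which the $\Delta$-complex setting allows), $d_f(v_i,v_i)=0$ and that inequality is vacuous; you must instead use the other defining condition of $R$, namely $r_i<\mathrm{Inj}(v_i)$, together with $l_{ii}\ge 2\,\mathrm{Inj}(v_i)$, to get $I_{ii}>1$, as the paper does.
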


\begin{proof}
     On one hand, given $(S,V,d_f)$ and its weighted Delaunay triangulation $\mathcal{T}$, the lengths
    \[
        \mathbf{l}=\Phi_\mathcal{T}^{-1}([d_f]) = L_f(\mathbf{I},\mathbf{r}) \in \mathbb{R}^{E(\mathcal{T})}_\Delta
    \]
    satisfy the triangle inequalities. Then we solve inversive distances edge by edge from the above equality. As $\mathbf{r} \in R$, for any $v_i,v_j \in V$ and $e_{ij} \in E(\mathcal{T})$, we have $r_i,r_j>0$ and $r_i+r_j<l_{ij}$, thus $I_{ij}>1$ holds. In particular, If $e_{ii}$ is an edge connected the same vertex, then $l_{ii}>2\mathrm{Inj}(v_i)>2r_i$ and $I_{ii}>1$. By Theorem \ref{local_whole_f} and Lemma \ref{local_del_f_lem}, we know that the inversive distances and the radii satisfy every local weight Delaunay condition, thus $\tilde{L}_f^{-1}(\mathbf{l},\mathbf{r}) \in D_f(\mathcal{T})$.
    
    On the other hand, given $(\mathbf{l},\mathbf{r}) \in D_f(\mathcal{T})$, we know that $(\mathbf{I},\mathbf{r}) \in Q_f(\mathcal{T})$ by Theorem \ref{del_in_trig_f}, and $\tilde{L}_f(\mathbf{I},\mathbf{r})$ is well-defined. Also, by Lemma \ref{local_del_f_lem}, with the corresponding isometry class of piecewise flat metric $[d_f]=\Phi_\mathcal{T} \circ L_f(\mathbf{I},\mathbf{r})$, every hinge of the triangulation $\mathcal{T}$ is local weighted Delaunay. The last thing to verify is $\mathbf{r} \in R$. Given any vertex $v_0 \in V$, the faces around $v_0$ are denoted by $f_1,\dots,f_k$, and their orthogonal circles are denoted by $O_1,\dots,O_k$. For any vertex $v_i \ne v_0$, if there is an edge in $\mathcal{T}$ connecting $v_i$ and $v_0$, then $I_{0i}>0$ and $r_0+r_i<l_{0i}$. If not, for any $j \in \{1,\dots,k\}$, the circle centered at $v_i$ with radius $r_i$ does not intersect or intersect with angle no more than $\frac{\pi}{2}$. Note that $O_1,\dots,O_k$ intersect on edges around $v_0$, and the union of them deformation retract to $\partial Vor_f(v_0)$. Moreover, the intersection $\partial Vor_f(v_0) \cap O_j$ is two radii of $O_j$. Thus, the circle centered at $v_i$ with radius $r_i$ does not intersect with $\partial Vor_f(v_0)$, and contained in $S \setminus Vor_f(v_0)$, then $r_0+r_i<l_{0i}$ or $\mathbf{r} \in R$.
\end{proof}

\begin{rmk}
    If $a,b,c,d,e>1$ and let $f$ satisfies formula \eqref{f}, then the following four inequalities \eqref{usjm1}, \eqref{usjm2}, \eqref{usjm3} and \eqref{usjm4} are equivalent and hold with equality simultaneously.
    \begin{equation}\label{usjm1}
        (1+a+d-e)\sqrt{(1+b)(1+c)}+(1+b+c-e)\sqrt{(1+a)(1+d)} \ge 0
    \end{equation}
    \begin{equation}\label{usjm2}
        (1+a+d-e)\sqrt{\Delta_{bce}}+(1+b+c-e)\sqrt{\Delta_{ade}} \ge 0
    \end{equation}   
    \begin{equation}\label{usjm3}
        \sqrt{(1+a)(1+d)}+\sqrt{(1+b)(1+c)}\ge\sqrt{(1+e)(1+f)}
    \end{equation}
    \begin{equation}\label{usjm4}
        \sqrt{\Delta_{abf}}+\sqrt{\Delta_{cdf}}\ge\sqrt{\Delta_{ade}}+\sqrt{\Delta_{bce}}
    \end{equation}
\end{rmk}
    
With the notation above, Ushijima proved that $e_{ij}$ is a Delaunay edge (with equal weights) of a truncated triangulation of hyperbolic surface with geodesic boundaries if and only if inequality \eqref{usjm1} or \eqref{usjm3} holds in \cite{ushijima1999canonical}, and we prove that $e_{ij}$ is Delaunay if and only if inequality \eqref{usjm2} or \eqref{usjm4} holds, where \eqref{usjm2} can be easily obtained from \eqref{usjm4} and \eqref{delta}.

\subsection{Weighted Delaunay condition for Hyperbolic case}

\begin{lem}\label{diameter}
    See Figure \ref{fig:ratio}. If an isometric embedding from the hyperbolic plane $\mathbb{H}^2$ in Lemma \ref{gbh_sinh} to a Poincar\'e unit disk $\mathbb{D}$, such that the geodesic $\gamma$ maps to the diameter of $\mathbb{D}$, then the ratio of the radii of the Euclidean arc where $\gamma_1$ and $\gamma_2$ locate is $r_1 \colon r_2$. Moreover, the inversive distance between the Euclidean circles where $\gamma_1$ and $\gamma_2$ locate is $\cosh d_\mathbb{H}(\gamma_1,\gamma_2)$.
\end{lem}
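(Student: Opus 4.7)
The plan is to work in the Poincar\'e disk model and establish the two claims separately. Let $c_i$ and $R_i$ denote the Euclidean center and radius of the circular arc carrying $\gamma_i$; orthogonality to $\partial\mathbb{D}$ gives $|c_i|^2 = 1 + R_i^2$. I would first observe that $\gamma$ cannot touch either $\gamma_i$: if $q \in \gamma \cap \gamma_1$ then $d(q,\gamma_1) = 0$, so by \eqref{vor_sinh} also $q \in \gamma_2$, contradicting $\gamma_1 \cap \gamma_2 = \emptyset$. Consequently neither $\gamma_i$ is a diameter, each $R_i$ is finite, and the Euclidean picture is well-posed.

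For the radius ratio I would exploit the fact that the origin $0 \in \mathbb{D}$ lies on the diameter $\gamma$ and compute $\sinh d_{\mathbb{H}}(0,\gamma_i)$ explicitly. Since hyperbolic distance from $0$ is a monotone function of Euclidean distance from $0$, the hyperbolic foot of perpendicular coincides with the Euclidean foot, at Euclidean distance $\rho_i = \sqrt{1+R_i^2} - R_i$ from $0$, so $d_{\mathbb{H}}(0,\gamma_i) = 2\arcth(\rho_i)$. A short simplification using $1 - \rho_i^2 = 2R_i\rho_i$ produces the clean identities
\[
\sinh d_{\mathbb{H}}(0,\gamma_i) = \frac{1}{R_i}, \qquad \cosh d_{\mathbb{H}}(0,\gamma_i) = \frac{|c_i|}{R_i}.
\]
Substituting $q = 0$ into \eqref{vor_sinh} then gives $r_1/R_1 = r_2/R_2$, which is precisely the claimed ratio $r_1 : r_2$.

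For the inversive distance claim I would appeal to two invariances: Euclidean inversive distance is preserved by every M\"obius transformation, and the hyperbolic distance between two geodesics is preserved by hyperbolic isometries of $\mathbb{D}$, which are themselves M\"obius maps. I may therefore apply an isometry of $\mathbb{D}$ to move to the canonical position in which the common perpendicular of $\gamma_1,\gamma_2$ is the real diameter; by conformality of the disk model the new centers $c_1,c_2$ lie on the real axis on opposite sides of $0$, and the common perpendicular has length $d_1+d_2$ with $d_i = d_{\mathbb{H}}(0,\gamma_i)$. The identities above then give
\[
\cosh d_{\mathbb{H}}(\gamma_1,\gamma_2) = \cosh d_1 \cosh d_2 + \sinh d_1 \sinh d_2 = \frac{|c_1||c_2|+1}{R_1 R_2} = \frac{1-c_1 c_2}{R_1 R_2},
\]
while $|c_1-c_2|^2 - R_1^2 - R_2^2 = |c_1|^2 + |c_2|^2 - 2c_1 c_2 - R_1^2 - R_2^2 = 2 - 2c_1 c_2$ yields the Euclidean inversive distance $(1-c_1 c_2)/(R_1 R_2)$ as well. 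Equality in canonical position upgrades to equality in general by the two invariances.

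The main obstacle is the identity $\sinh d_{\mathbb{H}}(0,\gamma) = 1/R$ relating hyperbolic distance from the disk center to the Euclidean radius of an orthogonal arc; everything else is just picking a convenient canonical position and invoking M\"obius invariance of the inversive distance.
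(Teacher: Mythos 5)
Your proof is correct, but it takes a noticeably different computational route from the paper's. For the ratio claim, the paper never uses the origin directly: it parametrizes along the Euclidean line of centers $O_1O_2$, identifies $\gamma$ with the radical axis of the two circles (via the unit tangent length from the disk center), locates the intersection point $M$ by solving the equal-power equations, and then extracts $\sinh d_\mathbb{H}(\gamma,\gamma_i)$ from the cross-ratio formula for hyperbolic length, obtaining a symmetric expression divided by $2l\rho_i$; the ratio then falls out. You instead evaluate the defining equation \eqref{vor_sinh} at the single point $q=0\in\gamma$ and use the clean identities $\sinh d_\mathbb{H}(0,\gamma_i)=1/R_i$, $\cosh d_\mathbb{H}(0,\gamma_i)=|c_i|/R_i$ coming from orthogonality $|c_i|^2=1+R_i^2$ — this sidesteps both the radical-axis identification and the cross-ratio computation, and is shorter. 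For the inversive-distance claim the paper stays in the original position and computes $\cosh(d_\mathbb{H}(\gamma_1,\gamma)+d_\mathbb{H}(\gamma,\gamma_2))$ directly from its explicit $\sinh$ and $\cosh$ values, whereas you renormalize by a M\"obius map so that the common perpendicular becomes the real diameter and invoke the M\"obius invariance of both sides; that is legitimate, and the algebra $|c_1-c_2|^2-R_1^2-R_2^2=2-2c_1c_2$ matches the paper's $l^2-\rho_1^2-\rho_2^2$. The only point you should state explicitly is that after mapping the common perpendicular to the real axis you still have the freedom to translate along it so that $0$ lies \emph{between} the two feet (equivalently, so that $c_1c_2<0$); this is what licenses $d_\mathbb{H}(\gamma_1,\gamma_2)=d_1+d_2$ rather than $|d_1-d_2|$, and it does not follow from conformality alone. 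With that one sentence added, your argument is complete and arguably cleaner than the original.
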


\begin{figure}[ht]
    \centering
    \includegraphics{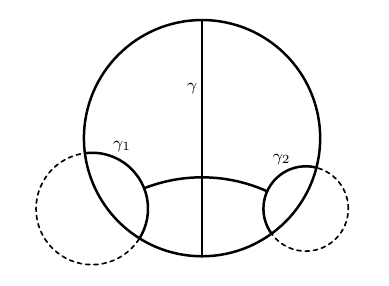}
    \caption{The geodesic $\gamma$ like a diameter satisfies Equation \eqref{vor_sinh}.} \label{fig:ratio}
\end{figure}

\begin{proof}
    In this proof, the Euclidean distance between two points is denoted as $d_\mathbb{E}(P,Q)=\overline{PQ}$.

    Denote the centers of Euclidean circles where $\gamma_1$ and $\gamma_2$ locate by $O_1$ and $O_2$, and their radii by $\rho_1$ and $\rho_2$. The line segment connecting $O_1$ and $O_2$ intersect $\partial\mathbb{D}$, $\gamma_1$, $\gamma$, $\gamma_2$ and $\partial\mathbb{D}$ at points $A_1$, $B_1$, $M$, $B_2$, and $A_2$ in order. By the projective invariance of cross ratio and the hyperbolic length formula of cross ratio, we have
    \[
        d_\mathbb{H}(\gamma,\gamma_1)=\log\frac
        {\overline{MA_1}\,\overline{B_1A_2}}
        {\overline{A_1B_1}\,\overline{A_2M}}\quad
        d_\mathbb{H}(\gamma,\gamma_2)=\log\frac
        {\overline{MA_2}\,\overline{B_2A_1}}
        {\overline{A_2B_2}\,\overline{A_1M}}.
    \]

    Note that the length of tangent line from the center of $\mathbb{D}$ to $\gamma_1$ and $\gamma_2$ is $1$, then $\gamma$ is the radical axis of these two circles. Let $l=\overline{O_1O_2}$, and solve the equations $(\rho_1+\overline{B_1M})^2-\rho_1^2=(\rho_2+\overline{B_2M})^2-\rho_2^2$ and $l=\rho_1+\overline{B_1M}+\overline{MB_2}+\rho_2$. The solution is 
    \[
        \overline{B_1M}=\frac{(l-\rho_1)^2-\rho_2^2}{2l}\quad
        \overline{B_2M}=\frac{(l-\rho_2)^2-\rho_1^2}{2l}.
    \]
    By $\overline{OM}^2=1-\overline{A_1M}^2=1+\rho_1^2-(\rho_1+\overline{B_1M})^2$, we have
    \[
        A_1M=A_2M=\frac{\sqrt{(l-\rho_1-\rho_2)(l+\rho_1-\rho_2)
        (l-\rho_1+\rho_2)(l+\rho_1+\rho_2)}}{2l}.
    \]
    Then we can calculate the hyperbolic sine of $d_\mathbb{H}(\gamma,\gamma_1)$ and $d_\mathbb{H}(\gamma,\gamma_2)$. That is
    \[
        \begin{aligned}
            \sinh d_\mathbb{H}(\gamma,\gamma_1)=&
            \frac{\sqrt{(l-\rho_1-\rho_2)(l+\rho_1-\rho_2)
            (l-\rho_1+\rho_2)(l+\rho_1+\rho_2)}}{2l\rho_1},\\
            \sinh d_\mathbb{H}(\gamma,\gamma_2)=&
            \frac{\sqrt{(l-\rho_1-\rho_2)(l+\rho_1-\rho_2)
            (l-\rho_1+\rho_2)(l+\rho_1+\rho_2)}}{2l\rho_2}.\\
        \end{aligned}
    \]
    Therefore, $r_1 : r_2=\sinh d_\mathbb{H}(\gamma,\gamma_2):\sinh d_\mathbb{H}(\gamma,\gamma_1)=\rho_1 : \rho_2$.

    Finally, we verify that
    \[
        \cosh d_\mathbb{H}(\gamma_1,\gamma_2)=
        \cosh (d_\mathbb{H}(\gamma_1,\gamma)+d_\mathbb{H}(\gamma,\gamma_2))=
        \frac{l^2-\rho_1^2-\rho_2^2}{2\rho_1\rho_2}
    \]
    is the inversive distance.
\end{proof}	

Given a marked surface $(S,V)$ and its related hyperbolic surface with geodesic boundaries $(\Sigma,d)$, with a weight function $\mathbf{r} \in \mathbb{R}^V_{>0}$ and a truncated triangulation $\mathcal{T}=(\Gamma,E,F)$. Denote that $\mathbf{l}=\Omega_\mathcal{T}^{-1}([d])$ and $l_{ij}=\mathbf{l}(e_{ij})$.

For a truncated hinge $\Diamond_{ij;kl}$, we denote that $p=r_k,q=r_i,r=r_l,s=r_j,a=\cosh l_{ki},b=\cosh l_{il},c=\cosh l_{lj},d=\cosh l_{jk},e=\cosh l_{ij}$.

\begin{lem}\label{gbh_del}
    With the notations above, the edge $e_{ij}$ is local weighted Delaunay if and only if the inequality \eqref{ineq_f} holds, where $f$ satisfies the formula \eqref{f}.
\end{lem}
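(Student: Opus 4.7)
The strategy reduces the hyperbolic case to the Euclidean setting of Lemma~\ref{local_del_f_lem} via the Poincar\'e disk model $\mathbb{D}$. First, I would isometrically embed the truncated hinge $\Diamond_{ij;kl} \subset (\Sigma, d)$ into $\mathbb{D}$, so that each boundary geodesic $\gamma_\alpha$ for $\alpha \in \{i,j,k,l\}$ becomes an arc of a Euclidean circle $\tilde{C}_\alpha$ of Euclidean radius $\tilde{r}_\alpha$. By Lemma~\ref{diameter}, the Euclidean inversive distance between $\tilde{C}_\alpha$ and $\tilde{C}_\beta$ equals $\cosh$ of the hyperbolic distance between $\gamma_\alpha$ and $\gamma_\beta$, so the five pairwise Euclidean inversive distances on the hinge recover $a, b, c, d, e$ exactly. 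Using the three-parameter M\"obius freedom of $\mathbb{D}$ together with the degree $-1$ homogeneity of \eqref{ineq_f} in the weights $p, q, r, s$, I would arrange the Euclidean radii $\tilde{r}_\alpha$ to equal the hyperbolic weights $r_\alpha$ for all four circles simultaneously; pairwise applications of Lemma~\ref{diameter} ensure that this matching is internally consistent.

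With this setup, the hyperbolic weighted Voronoi structure of $\{\gamma_\alpha\}$ coincides with the Euclidean weighted Voronoi structure of $\{\tilde{C}_\alpha\}$: Lemma~\ref{diameter}, applied to each pair, identifies the hyperbolic Voronoi separator with the Euclidean radical axis of the corresponding pair of Euclidean circles. In particular, the hyperbolic Voronoi vertex $O_{ijk}$ (the unique point $x$ with $r_\alpha \sinh d(x,\gamma_\alpha)$ equal for $\alpha \in \{i,j,k\}$) is identified in $\mathbb{D}$ with the Euclidean center of the circle orthogonal to $\tilde{C}_i,\tilde{C}_j,\tilde{C}_k$, and similarly for $O_{ijl}$. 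Since ``which side of the geodesic $e_{ij}$ a point lies on'' is a topological property and hence the same for the hyperbolic and Euclidean metrics on $\mathbb{D}$, the signed-distance sum $h_{ij,k}+h_{ij,l}$ has the same sign whether measured hyperbolically or Euclideanly. Hence the hyperbolic local weighted Delaunay condition for $e_{ij}$ reduces to the Euclidean one for the Euclidean hinge in $\mathbb{D}$.

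Applying Lemma~\ref{local_del_f_lem} to this Euclidean hinge with radii $p,q,r,s$ and inversive distances $a,b,c,d,e$ yields the equivalence of the Euclidean local weighted Delaunay condition with inequality \eqref{ineq_f}, where $f$ satisfies \eqref{f}. Chained with the reduction above, this completes the proof. The main obstacle is the first step: arranging a single embedding into $\mathbb{D}$ so that all four Euclidean radii match the hyperbolic weights in ratio. This is a four-against-four counting problem (three M\"obius parameters plus one overall scale, versus four radius ratios), and verifying that the pairwise content of Lemma~\ref{diameter} is simultaneously realizable across all four boundary geodesics takes some care. Alternatively, one can bypass this step by noting that Lemma~\ref{diameter} already identifies the Voronoi-combinatorial data pointwise, so the argument can proceed with M\"obius-invariant quantities without aligning the Euclidean radii globally.
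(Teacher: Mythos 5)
There is a genuine gap in your main route. The global radius-matching step fails in general: for a circle orthogonal to $\partial\mathbb{D}$ with Euclidean radius $\tilde\rho_\alpha$, one computes $\sinh d_{\mathbb{H}}(0,\gamma_\alpha)=1/\tilde\rho_\alpha$, so arranging $\tilde r_\alpha\propto r_\alpha$ for all four boundary geodesics simultaneously is equivalent to finding a point $x\in\mathbb{D}$ (the preimage of the Euclidean center) with $r_\alpha\sinh d(x,\gamma_\alpha)$ constant over all four indices. That is three equations in the two-dimensional choice of $x$ (the rotational part of the M\"obius group is irrelevant, and there is no ``overall scale'' available in the Poincar\'e disk), so it is over-determined and has a solution exactly when the four circles share a common orthogonal circle --- i.e.\ precisely the degenerate case $O_{ijk}=O_{ijl}$ where \eqref{ineq_f} holds with equality (this is the configuration exploited in Lemma \ref{intersection}, not the generic one). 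Your counting of ``three M\"obius parameters plus one overall scale versus four radius ratios'' is therefore misleading. Relatedly, the claim that the hyperbolic weighted bisector coincides with the Euclidean radical axis is only true when that bisector is a diameter: the radical axis of two circles orthogonal to $\partial\mathbb{D}$ is a Euclidean line through the center of $\mathbb{D}$, while the bisector is a hyperbolic geodesic, and these agree only in the diametral position that Lemma \ref{diameter} assumes.

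Your fallback remark (``proceed with M\"obius-invariant quantities without aligning the radii globally'') points at the correct fix, which is what the paper does, but it is not carried out and the missing step is quantitative, not topological. The paper works \emph{face by face}: placing $O_{ijk}$ at the Euclidean center makes all three bisectors of that face diameters, so Lemma \ref{diameter} applies and the face corresponds to a Euclidean configuration with radii $p/\rho'_{ijk},q/\rho'_{ijk},s/\rho'_{ijk}$ and the same inversive distances; the key identity is then $\tanh h_{ij,k}=h'_{ij,k}/\rho'_{ij,k}$, where the right-hand side is the per-face scale-invariant quantity appearing in \eqref{h_over_rho}. Summing over the two faces, \eqref{ineq_f} becomes $\tanh h_{ij,k}+\tanh h_{ij,l}\ge 0$, which is equivalent to $h_{ij,k}+h_{ij,l}\ge 0$ because $\tanh$ is increasing and odd. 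Your ``same side of $e_{ij}$'' argument only controls the sign of each individual term; when $h_{ij,k}$ and $h_{ij,l}$ have opposite signs the sign of the \emph{sum} depends on magnitudes, and without the monotone odd function relating the hyperbolic distances to the Euclidean ratios the equivalence of the two Delaunay conditions does not follow.
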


\begin{proof}
    We consider the hyperbolic right-angled hexagon $f_{ijk} \subset \mathbb{D}$ conformally embedded in the unit circle in $\mathbb{E}^2$. Since the length $h_{ij,k}$ is hyperbolic, we denote $h'_{ij,k}$ as the Euclidean length of the face $f_{ijk}$ with radii $p,q,s$ and inversive distances $a,d,e$. Moreover, the radius of the orthogonal circle of $f_{ijk}$ is denoted by $\rho'_{ijk}$. By Lemma \ref{diameter},  the image of $f_{ijk}$ in $\mathbb{E}^2$ is an arc right-angled hexagon with radii $\frac{p}{\rho'_{ijk}},\frac{q}{\rho'_{ijk}},\frac{s}{\rho'_{ijk}}$ and inversive distances $a,d,e$. By simple calculations, we obtain
    \[
        \tanh h_{ij,k}=\frac{h'_{ijk}}{\rho'_{ijk}}.
    \]
    Therefore, the inequality \eqref{ineq_f} is equivalent to \eqref{h_over_rho} or $\tanh h_{ij,k}+\tanh h_{ij,l} \ge 0$ here. Since $\tanh$ is an increasing odd function, the inequality \eqref{ineq_f} is equivalent to $h_{ij,k}+h_{ij,l} \ge 0$, which means the edge $e_{ij}$ is local weighted Delaunay.
\end{proof}

The judgment of global weighted Delaunay on hyperbolic surface with geodesic boundaries is decided by whether the truncated triangulation is the dual of the weighted Voronoi decomposition or not. Since every edge is local weighted Delaunay, it is the dual of some 1-cell of weighted Voronoi decomposition. The duality from 2-cells and 0-cells to vertices and faces is guarantee by Theorem \ref{gbh_unique}. Similar to Theorem \ref{whole_del_f}, we have the following theorem.
\begin{thm}\label{gbh_whole_del}
    Given a hyperbolic surface with geodesic boundaries $(\Sigma,\Gamma,d)$ and a weight function $\mathbf{r} \in \mathbb{R}^\Gamma_{>0}$, the truncated triangulation $\mathcal{T}$ is weighted Delaunay if and only if there exist $(\mathbf{I},\mathbf{r}) \in D_f$ such that $\Omega_\mathcal{T} \circ \arcch(\mathbf{I})=[d]$, where
    \[
        \arcch \colon \mathbb{R}^k_{\ge 1} \to \mathbb{R}^k_{\ge 0}\quad
        (I_1,\dots,I_k)\mapsto(\arcch I_1,\dots,\arcch I_k).
    \]
\end{thm}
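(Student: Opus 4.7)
The plan is to mirror the proof of Theorem \ref{whole_del_f}, replacing each Euclidean ingredient with its hyperbolic analog supplied earlier in Section \ref{sec:del}. The setup is actually simpler in the hyperbolic case, because the length coordinate $\Omega_\mathcal{T}^{-1}$ has target $\mathbb{R}_{>0}^{E(\mathcal{T})}$ with no triangle-inequality constraint (Theorem \ref{teich home}), and the weight function $\mathbf{r}$ lives in the unrestricted positive orthant $\mathbb{R}_{>0}^\Gamma$, so no analog of the $\mathbf{r}\in R$ check at the end of Theorem \ref{whole_del_f} is needed.

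For the forward direction, I start from a weighted Delaunay truncated triangulation $\mathcal{T}$ of $(\Sigma,\Gamma,d)$. Theorem \ref{teich home} produces lengths $\mathbf{l}=\Omega_\mathcal{T}^{-1}([d])\in\mathbb{R}_{>0}^{E(\mathcal{T})}$, and I set $\mathbf{I}\coloneqq\cosh(\mathbf{l})$, which automatically lies in $\mathbb{R}_{>1}^{E(\mathcal{T})}$ because $l_i>0$. By definition, a globally weighted Delaunay triangulation is in particular locally weighted Delaunay on every edge, i.e.\ $h_{ij,k}+h_{ij,l}\ge 0$ for each hinge. Lemma \ref{gbh_del} then translates this directly into the inequality \eqref{ineq_f} on each edge, so $(\mathbf{I},\mathbf{r})\in D_f(\mathcal{T})$, and by construction $\Omega_\mathcal{T}\circ\arcch(\mathbf{I})=[d]$.

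For the reverse direction, suppose $(\mathbf{I},\mathbf{r})\in D_f(\mathcal{T})$ with $\Omega_\mathcal{T}\circ\arcch(\mathbf{I})=[d]$. The data $\arcch(\mathbf{I})$ recovers the hyperbolic lengths of the orthogonal geodesic segments that realise $\mathcal{T}$ inside $(\Sigma,d)$, so $\mathcal{T}$ is indeed a truncated geodesic triangulation of $(\Sigma,d)$. Applying Lemma \ref{gbh_del} in the other direction, inequality \eqref{ineq_f} on every edge yields $h_{ij,k}+h_{ij,l}\ge 0$ everywhere, so every edge of $\mathcal{T}$ is local weighted Delaunay. As noted in the paragraph preceding the theorem, each such edge is dual to a $1$-cell of the weighted Voronoi decomposition, and the $2$-cell/$0$-cell duality to vertices and faces is exactly what Theorem \ref{gbh_unique} guarantees. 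Hence $\mathcal{T}$ agrees with the dual graph of the weighted Voronoi decomposition, i.e.\ it is a weighted Delaunay triangulation.

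The main technical subtlety I anticipate is the reverse direction's final step: one must argue that \emph{local} Delaunay at every edge, together with the uniqueness statement of Theorem \ref{gbh_unique} (which is only ``up to finite diagonal switches''), is enough to conclude that $\mathcal{T}$ itself is a weighted Delaunay triangulation, not merely that some edge-flip of $\mathcal{T}$ is. This is handled by the duality characterisation: once all edges of $\mathcal{T}$ arise as duals of $1$-cells of the weighted Voronoi decomposition, $\mathcal{T}$ must be one of the admissible triangulations of the weighted Delaunay tessellation, which is precisely the content of being weighted Delaunay in Definition \ref{del_def}. No regularity condition on $\mathbf{r}$ analogous to $R$ needs to be checked, since $\mathbb{R}_{>0}^\Gamma$ is already the full domain in the hyperbolic setting.
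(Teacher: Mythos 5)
Your proposal is correct and follows essentially the same route the paper takes: the paper's justification is precisely the paragraph preceding the theorem (local weighted Delaunay on every edge via Lemma \ref{gbh_del}, then the duality with the weighted Voronoi decomposition via Theorem \ref{gbh_unique}) together with the remark that the argument mirrors Theorem \ref{whole_del_f}. Your observation that the $\mathbf{r}\in R$ verification drops out because the hyperbolic weight domain is all of $\mathbb{R}_{>0}^\Gamma$ matches the paper's setup.
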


\section{Diffeomorphism between Teichm\"uller spaces}\label{sec:diffeo}

\subsection{Homeomorphism between Teichm\"uller spaces}

Recall that given a marked surface $(S,V)$, the Teichm\"uller space of piecewise flat metric is
\[
    Teich_f(S,V)=\bigcup_\mathcal{T}\Phi_\mathcal{T}
    (\mathbb{R}^{E(\mathcal{T})}_\Delta).
\]
Then we attach the weight $R$ to Teichm\"uller spaces. Here we do not omit the metric on the symbol, and denote it as $R(d_f)$. Define that
\[
    \widetilde{Teich}_f(S,V) \coloneqq \left\{([d_f],\mathbf{r})
    \mid [d_f]\in Teich_f(S,V), \mathbf{r} \in R(d_f) \right\},
\]
and the map
\[
    \tilde{\Phi}_\mathcal{T} \colon Q_f(\mathcal{T}) \subset
    \mathbb{R}_{>1}^{E(\mathcal{T})} \times \mathbb{R}_{>0}^V \to
    Teich_f(S,V) \times \mathbb{R}_{>0}^V \quad (\mathbf{I},\mathbf{r}) 
    \mapsto (\Phi_\mathcal{T} \circ L_f(\mathbf{I},\mathbf{r}),\mathbf{r}).
\]

This map is similar to $\tilde{L}_f$, however, there image lie in different space. The tilde symbol here does not mean that $\tilde{\Phi}_\mathcal{T}$ is the lifting of  $\Phi_\mathcal{T}$ for concise. Actually, it is the composition of the lifting with $L_f$.

Similar to $P_f(\mathcal{T})=\Phi_\mathcal{T}(\mathbb{R}^{E(\mathcal{T})}_\Delta)$, we define
\begin{equation}\label{P_f}
    \tilde P_f(\mathcal{T}) \coloneqq \tilde\Phi_\mathcal{T}(Q_f(\mathcal{T})).
\end{equation}

Since $\Phi_\mathcal{T} \colon \mathbb{R}^{E(\mathcal{T})}_\Delta \to P_f(\mathcal{T})$ is a homeomorphism, by Proposition \ref{lift_inject}, we know $\tilde\Phi_\mathcal{T} \colon Q_f(\mathcal{T}) \to \tilde P_f(\mathcal{T})$ is a real analytic homeomorphism.

Let $\pi([d_f],\mathbf{r})=[d_f]$. Since $L_f(Q_\mathcal{T})=\mathbb{R}^{E(\mathcal{T})}_\Delta$, we know $\pi(\tilde P_f(\mathcal{T}))=\Phi_\mathcal{T} \circ L_f(Q_f(\mathcal{T}))=P_f(\mathcal{T})$.

Recall the Teichm\"uller space of hyperbolic surface with geodesic boundaries as well. Suppose a surface $\Sigma$ with $\chi(\Sigma)<0$ and a marked surface $(S,V)$ are related, given a triangulation $\mathcal{T}=(V,E,F)$, for any length coordinate $\mathbf{x} \in \mathbb{R}_{>0}^{E(\mathcal{T})}$, there is an equivalent class up to an isometry isotopic to the identity on $\Sigma$ with a representative element $d$, which is $[d]=\Omega_\mathcal{T}(\mathbf{x}) \in Teich(\Sigma)$.

To attach the weight on the Teichm\"uller space, we define
\begin{equation}\label{omega_}
    \tilde{\Omega}_\mathcal{T} \colon \mathbb{R}_{>1}^{E(\mathcal{T})}
    \times \mathbb{R}_{>0}^V \to Teich(\Sigma) \times 
    \mathbb{R}_{>0}^V \quad (\mathbf{I},\mathbf{r}) \mapsto
    (\Omega_\mathcal{T}\circ \arcch(\mathbf{I}),\mathbf{r}),
\end{equation}
which is a homeomorphic map by Theorem \ref{teich home formula}.

With these notations, we know $\tilde\Phi_\mathcal{T}(D_f(\mathcal{T}))\subset\widetilde{Teich}_f(S,V)$ by Theorem \ref{del_in_trig_f} and Theorem \ref{whole_del_f}. Considering Theorem \ref{local_whole_f} and Theorem \ref{gbh_unique}, we have
\begin{equation}\label{union_teich_f}
    \begin{aligned}
        \widetilde{Teich}_f(S,V)&=\bigcup_\mathcal{T}
        \tilde\Phi_\mathcal{T}(D_f(\mathcal{T}))
        \\
        Teich(\Sigma) \times \mathbb{R}_{>0}^V&=\bigcup_\mathcal{T}
        \tilde\Omega_\mathcal{T}(D_f(\mathcal{T})).
    \end{aligned}
\end{equation}

Now we define
\begin{equation}\label{A_T}%
    A_\mathcal{T} \coloneqq \tilde\Omega_\mathcal{T} \circ 
    \tilde\Phi_\mathcal{T}^{-1} \colon \tilde P_f(\mathcal{T})
    \to Teich(\Sigma) \times \mathbb{R}_{>0}^V,
\end{equation}
which is a composition of two real analytic homeomorphism. Since the image of $\tilde\Phi_\mathcal{T}^{-1}$ is a subset of the domain of $\tilde\Omega_\mathcal{T}$, we know $A_\mathcal{T}$ is a real analytic injective map.

For two different triangulations $\mathcal{T}$ and $\mathcal{T}'$, if $P_f(\mathcal{T})\cup P_f(\mathcal{T}') \ne \varnothing$, let $([d_f],\mathbf{r})\in\tilde P_f(\mathcal{T})\cup P_f(\mathcal{T}')$, generally, $A_\mathcal{T}([d_f],\mathbf{r}) \ne A_{\mathcal{T}'}([d_f],\mathbf{r})$. However, we have the following lemma:
\begin{lem}\label{intersection}
    Given two triangulations $\mathcal{T}$ and $\mathcal{T}'$ of the surface $(S,V)$ such that $\tilde\Phi_\mathcal{T}(D_f(\mathcal{T})) \cap \tilde\Phi_{\mathcal{T}'}(D_f(\mathcal{T}')) \ne \varnothing$, suppose $([d_f],\mathbf{r}) \in \tilde\Phi_\mathcal{T}(D_f(\mathcal{T})) \cap \tilde\Phi_{\mathcal{T}'}(D_f(\mathcal{T}'))$, then
    \[
        A_\mathcal{T}([d_f],\mathbf{r})=A_{\mathcal{T}'}([d_f],\mathbf{r}).
    \]
\end{lem}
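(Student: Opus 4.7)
The plan is to reduce to a single edge flip, observe that the inversive distances agree on every shared edge, and then verify that the inversive distance on the flipped diagonal coincides with the value predicted by the Ptolemy transition formula \eqref{f} from Theorem \ref{teich home formula}. Once this identification is in place, $A_\mathcal{T}$ and $A_{\mathcal{T}'}$ must coincide on $([d_f],\mathbf{r})$, because by construction they differ only by a transition map that acts by the Ptolemy substitution.

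First I would invoke Theorems \ref{whole_del_f} and \ref{local_whole_f}: since $([d_f],\mathbf{r})$ lies in both $\tilde\Phi_\mathcal{T}(D_f(\mathcal{T}))$ and $\tilde\Phi_{\mathcal{T}'}(D_f(\mathcal{T}'))$, the triangulations $\mathcal{T}$ and $\mathcal{T}'$ are both weighted Delaunay for $(S,V,d_f)$ with weight $\mathbf{r}$, and are therefore connected by a finite chain of diagonal switches each preserving the weighted Delaunay condition. It thus suffices to treat the case where $\mathcal{T}'$ is obtained from $\mathcal{T}$ by flipping the diagonal of a single hinge $\Diamond_{ij;kl}$. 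Writing $(\mathbf{I},\mathbf{r}) = \tilde\Phi_\mathcal{T}^{-1}([d_f],\mathbf{r})$ and $(\mathbf{I}',\mathbf{r}) = \tilde\Phi_{\mathcal{T}'}^{-1}([d_f],\mathbf{r})$, formula \eqref{inveuc} recovers the inversive distance of an edge purely from its length and the two endpoint radii, so $I_e = I'_e$ on every shared edge. Setting $a = I_{ki}$, $b = I_{il}$, $c = I_{lj}$, $d = I_{jk}$, $e = I_{ij}$, $f = I'_{kl}$, only $e \mapsto f$ changes; by Theorem \ref{teich home formula} the unique coordinate affected by $\Omega_{\mathcal{T}'}^{-1} \circ \Omega_\mathcal{T}$ is the one on $e_{kl}$, which passes from $\arcch e$ to $\arcch\tilde f$ with $\tilde f$ given by formula \eqref{f}. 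The lemma therefore reduces to proving $f = \tilde f$.

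To establish $f = \tilde f$, I plan to realize the Euclidean hinge as a hyperbolic right-angled octagon via Lemma \ref{diameter}. Each Euclidean face of the hinge, together with its three vertex-circles, is M\"obius-equivalent (via the map sending its orthogonal circle to $\partial\mathbb{D}$) to a hyperbolic right-angled hexagon whose three geodesic sides have hyperbolic cosines equal to the corresponding inversive distances. Gluing the two hexagons coming from $f_{ijk}$ and $f_{ijl}$ along the common edge of length $\arcch e$ yields a right-angled octagon $H \subset \mathbb{H}^2$ determined solely by $\mathbf{r}$ and $a,b,c,d,e$. Performing the same construction on the flipped faces $f_{ikl}, f_{jkl}$ gives the same octagon $H$ triangulated by the other diagonal, whose hyperbolic cosine is $f$ by construction. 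On the other hand, flipping the diagonal intrinsically inside $H$ produces, by Theorem \ref{teich home formula}, a diagonal of hyperbolic cosine $\tilde f$. Hence $f = \tilde f$, and the lemma follows.

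The hard part will be the well-posedness of the octagon $H$: the two hexagons are manufactured by two different M\"obius transformations (one per face), so matching them along the shared edge requires the boundary-arc lengths at $v_i$ and $v_j$ to agree with those in the flipped picture. Concretely at $v_i$, writing $\alpha, \beta$ for the hexagon arcs at $v_i$ in $f_{ijk}, f_{ijl}$ and $\gamma$ for the analogous arc in $f_{ikl}$, one needs $\alpha+\beta = \gamma$; this is exactly the identity $\cosh(\alpha+\beta) = (ab+f)/(\sqrt{a^2-1}\sqrt{b^2-1})$ derived inside the proof of Theorem \ref{teich home formula}, which is in turn the algebraic content of formula \eqref{f}. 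Once this compatibility is verified (and symmetrically at $v_j$), the octagon $H$ is unambiguously defined, the Euclidean and hyperbolic flips describe the same diagonal inside $H$, and the argument closes.
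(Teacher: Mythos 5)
Your reduction to a single flip and the decision to transfer the problem to the hyperbolic picture via Lemma \ref{diameter} and Theorem \ref{teich home formula} match the paper's strategy, but the core geometric step has a genuine gap. Your construction builds the octagon $H$ by applying a \emph{separate} M\"obius map to each face (sending that face's own orthogonal circle to $\partial\mathbb{D}$) and then gluing the two abstract right-angled hexagons along the side of length $\arcch e$. This gluing is indeed well defined, but nothing in your argument connects the resulting abstract octagon to the flipped diagonal $e_{kl}$: the flipped faces $f_{kli},f_{klj}$ have their own orthogonal circles and their own M\"obius maps, producing hexagons that assemble into an octagon $H'$ with diagonal data $F=\frac{z^2-p^2-r^2}{2pr}$, and the claim $H=H'$ is exactly equivalent to the statement $F=\tilde f$ you are trying to prove. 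The same circularity appears in the compatibility condition you flag as ``the hard part'': $\alpha+\beta=\gamma$ is equivalent to $\cosh\gamma=(ab+\tilde f)/(\sqrt{a^2-1}\sqrt{b^2-1})$ with $\tilde f$ given by \eqref{f}, whereas $\gamma$ is the arc of the $(a,F,b)$-hexagon, so $\alpha+\beta=\gamma$ \emph{is} the assertion $F=\tilde f$; the identity ``derived inside the proof of Theorem \ref{teich home formula}'' defines $\tilde f$ and says nothing about $F$. That the gap is real and not merely cosmetic is confirmed by Lemma \ref{dfdF}, which states explicitly that $F\ne f$ in general: your per-face argument never uses the hypothesis that the point lies in \emph{both} cells, so if it were valid it would prove $F=\tilde f$ for every hinge, which is false.

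The missing idea is the one the paper's proof turns on: since $([d_f],\mathbf{r})$ lies in $\tilde\Phi_\mathcal{T}(D_f(\mathcal{T}))\cap\tilde\Phi_{\mathcal{T}'}(D_f(\mathcal{T}'))$, the hinge $\Diamond_{ij;kl}$ is weighted Delaunay in both triangulations, which forces $h_{ij,k}+h_{ij,l}=0$ and hence $O_{ijk}=O_{ijl}$ — all four vertex circles share a \emph{single} common orthogonal circle. One then applies one M\"obius map to the whole developed hinge, sending this common circle to $\partial\mathbb{D}$; all four vertex circles become geodesics simultaneously, Lemma \ref{diameter} applies to every pair at once (including the pair $v_k,v_l$, which is what ties $F$ to the hyperbolic octagon), and Theorem \ref{teich home formula} then yields $F=\tilde f$. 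To repair your proof you must insert this coincidence of orthogonal circles and replace the two per-face M\"obius maps by the single one; as written, the argument does not close.
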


\begin{proof}
    By Theorem \ref{whole_del_f} we know that $\mathcal{T}$ and $\mathcal{T}'$ are all weight Delaunay triangulation of $(S,V,d_f)$ with the weight $\mathbf{r}$. By Theorem \ref{local_whole_f} we know that they differ by a finite number of edge switches.

    Without loss of generality, we suppose that $\mathcal{T}$ and $\mathcal{T}'$ differ by one switch of the hinge $\Diamond_{ij;kl}$. With the notations in Formula \eqref{ineq_f}, denote the inversive distance components of $\Phi_\mathcal{T}^{-1}([d],\mathbf{r})$ at edges $e_{ki},e_{il},e_{lj},e_{jk},e_{ij}$ by $a,b,c,d,e$, and the weight components at vertices $v_k,v_i,v_l,v_j$ by $p,q,r,s$, then the components of $\Phi_{\mathcal{T}'}^{-1}([d],\mathbf{r})$ at $e_{ki},e_{il},e_{lj},e_{jk},v_k,v_i,v_l,v_j$ are equal to the ones of $\Phi_\mathcal{T}^{-1}([d],\mathbf{r})$, while the one at $e_{kl}$ is denoted by $f$. By Theorem \ref{teich home formula}, to prove $A_\mathcal{T}([d_f],\mathbf{r})=A_{\mathcal{T}'}([d_f],\mathbf{r})$, we only need to show that $f$ satisfies Formula \eqref{f}.

    The hinge $\Diamond_{ij;kl}$ being able to switch means $O_{ijk}$ and $O_{ijl}$ coincide. After developing them onto $\mathbb{E}^2$, the four circles at vertices share a common orthogonal circle. We zoom the orthogonal circle to the unit disk and give a Poincar\'e hyperbolic metric on it. The arcs truncated by the orthogonal circle, denoted by $\Gamma_k,\Gamma_i,\Gamma_l,\Gamma_j$ in counter-clockwise order, zoomed at the same time, can be regarded as geodesics in $\mathbb{D}$. The radical axes of these arcs pass through $O_{ijk}$ or $O_{ijl}$. By Lemma \ref{diameter}, the hyperbolic distances between these arcs are $\arcch a,\arcch b,\arcch c,\arcch d,\arcch e$. (See Figure \ref{fig:4hyp}.) By Theorem \ref{teich home formula}, the hyperbolic distance between $\Gamma_k$ and $\Gamma_l$ is $\arcch f$, where $f$ satisfies Equation \eqref{f}. Also, by Lemma \ref{diameter}, the inversive distance between $\Gamma_k$ and $\Gamma_l$ is $f$. Since inversive distance does not change by zooming, in the flipped hinge $\Diamond_{kl;ij}$ from $\Diamond_{ij;kl}$, the inversive distance of edge $e_{kl}$ is $f$ as desired.

    For the case that $\mathcal{T}$ and $\mathcal{T}'$ differ by a finite number of switches, just repeat the step above. The image of $([d_f],\mathbf{r})$ does not change under different mappings.
\end{proof}

We can glue all the mappings $A_\mathcal{T}$ together, and construct the mapping between two different kinds of Teichm\"uller space of $(S,V)$. Define that
\[
    \mathbf{A} \coloneqq \bigcup_\mathcal{T}A_\mathcal{T}|_
    {\tilde\Phi_\mathcal{T}(D_f(\mathcal{T}))} \colon \widetilde{Teich}_f(S,V)
    \to Teich(\Sigma) \times \mathbb{R}_{>0}^V
\]
By Lemma \ref{intersection} and Equation \eqref{union_teich_f}, $\mathbf{A}$ is well-defined.

All the relation about the mappings are concluded in Figure \ref{fig:maps}.

\begin{figure}
    \centering
    \begin{tikzcd}[row sep=tiny,column sep=small]
        Teich(\Sigma)\times\mathbb{R}_{>0}^V \arrow[dddddddd] &  &  &  &  &
            {\widetilde{Teich}_f(S,V)} \arrow[dddddddd, bend left]
            \arrow[lllll, "\mathbf{A}"'] \\
        &  &  &  &  & \cup \\
        &  & Q_f(\mathcal{T}) \arrow[rrdd, "\tilde L_f"]
            \arrow[rrr, "\tilde\Phi_\mathcal{T}"]  &  &  & 
            \tilde P_f(\mathcal{T}) \arrow[llllluu, "A_\mathcal{T}"'] \\
        &  & \cap  &  &  &  \\
        &  & \mathbb{R}_{>1}^{E(\mathcal{T})}\times\mathbb{R}_{>0}^V
            \arrow[rrdddd, "L_f"] \arrow[dddd, "\pi"] 
            \arrow[lluuuu, "\tilde{\Omega}_\mathcal{T}"'] &  & 
            \mathbb{R}^{E(\mathcal{T})}_\Delta\times\mathbb{R}_{>0}^V
            \arrow[dd, "\pi"] & \\
        &  &  &  &  &  \\[60pt]
        &  &  &  & \mathbb{R}^{E(\mathcal{T})}_\Delta
            \arrow[r, "\Phi_\mathcal{T}"]  & P_f(\mathcal{T}) \\
        &  &  &  & \cap & \cap  \\
        Teich(\Sigma) & \mathbb{R}_{>0}^{E(\mathcal{T})}
            \arrow[l, "\Omega_\mathcal{T}"'] & \mathbb{R}_{>1}^{E(\mathcal{T})}
            \arrow[l, "\arcch"'] &  & \mathbb{R}_{>0}^{E(\mathcal{T})} & {Teich_f(S,V)}
    \end{tikzcd}
    \caption{Notations of mappings}\label{fig:maps}
\end{figure}
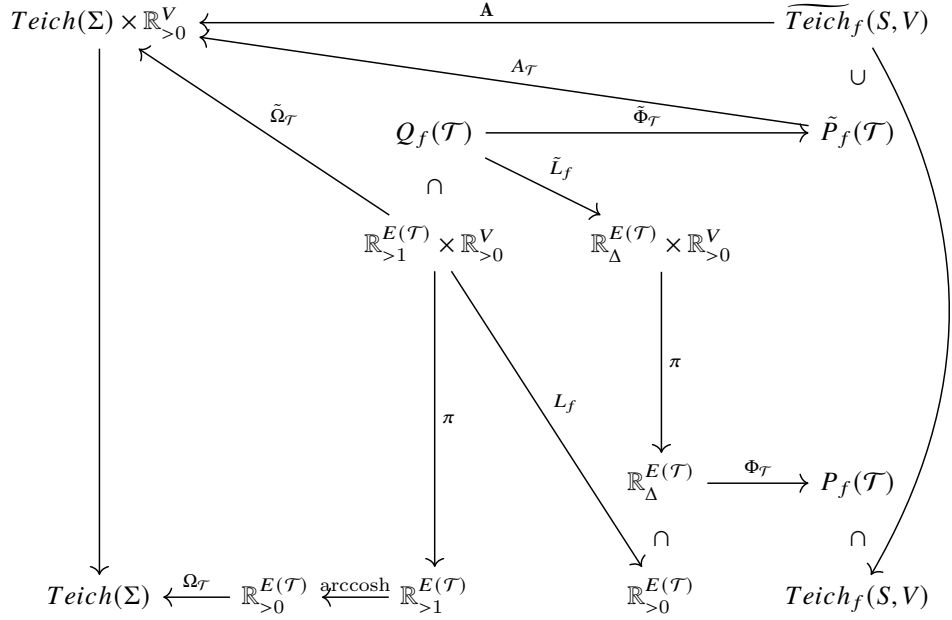

\begin{lem}\label{A_home}
    $\mathbf{A}$ is homeomorphic.
\end{lem}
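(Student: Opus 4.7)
The plan is to verify that $\mathbf{A}$ is a bijection with continuous inverse by piecing together the real analytic homeomorphisms $A_\mathcal{T}$ using the two cell decompositions recorded in \eqref{union_teich_f}. Well-definedness is already given by Lemma \ref{intersection}, so what remains is bijectivity and bicontinuity. Surjectivity is immediate: any $([d],\mathbf{r}) \in Teich(\Sigma) \times \mathbb{R}_{>0}^V$ lies in some $\tilde\Omega_\mathcal{T}(D_f(\mathcal{T})) = A_\mathcal{T}(\tilde P_f(\mathcal{T}))$ by \eqref{union_teich_f}, so it has a preimage in $\widetilde{Teich}_f(S,V)$. For injectivity, suppose $\mathbf{A}(([d_f],\mathbf{r})) = \mathbf{A}(([d_f'],\mathbf{r}'))$ with witnessing triangulations $\mathcal{T}$ and $\mathcal{T}'$; reading off the $\mathbb{R}_{>0}^V$-factor gives $\mathbf{r} = \mathbf{r}'$ and the Teichm\"uller factor gives a common hyperbolic class $[d]$. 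Theorem \ref{gbh_whole_del} then identifies both $\mathcal{T}$ and $\mathcal{T}'$ as weighted Delaunay truncated triangulations of $(\Sigma,d)$ with weight $\mathbf{r}$, so by Theorem \ref{gbh_unique} they differ by finitely many diagonal switches along edges where \eqref{ineq_f} holds with equality. Running the computation of Lemma \ref{intersection} in reverse---each such switch transforms the Euclidean inversive distances by the same formula \eqref{f} used in Theorem \ref{teich home formula}---one concludes $[d_f] = [d_f']$.

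For continuity, each $A_\mathcal{T}$ is a real analytic homeomorphism on $\tilde\Phi_\mathcal{T}(D_f(\mathcal{T}))$, which is a closed subset of $\widetilde{Teich}_f(S,V)$, and by Lemma \ref{intersection} these partial maps agree on all overlaps. I would then invoke the closed pasting lemma: the cover $\{\tilde\Phi_\mathcal{T}(D_f(\mathcal{T}))\}_\mathcal{T}$ is locally finite at each $([d_f],\mathbf{r})$, because only finitely many triangulations are weighted Delaunay for a given $(d_f,\mathbf{r})$ by Theorem \ref{local_whole_f}, and a sufficiently small neighborhood cannot meet further cells (an edge that is strictly non-Delaunay remains so under small perturbation). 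This yields continuity of $\mathbf{A}$. The symmetric argument applied to the cover $\{\tilde\Omega_\mathcal{T}(D_f(\mathcal{T}))\}_\mathcal{T}$ on the hyperbolic side, invoking Theorem \ref{gbh_unique} and Lemma \ref{gbh_del} in place of their Euclidean counterparts, gives continuity of $\mathbf{A}^{-1}$.

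The main obstacle will be the careful handling of cell boundaries: checking that two cells $\tilde\Phi_\mathcal{T}(D_f(\mathcal{T}))$ and $\tilde\Phi_{\mathcal{T}'}(D_f(\mathcal{T}'))$ intersect precisely on the locus where \eqref{ineq_f} holds with equality on the edges that differ, and verifying local finiteness rigorously enough for the pasting lemma to apply. The essential point making everything align is that one and the same local weighted Delaunay inequality \eqref{ineq_f}, with the same radii and inversive distances, governs both the Euclidean case (Lemma \ref{local_del_f_lem}) and the hyperbolic case (Lemma \ref{gbh_del}); so a single weighted Delaunay combinatorial structure indexes the cells on both sides of $\mathbf{A}$, and the compatibility formula \eqref{f} plays the same role for edge flipping in both geometries.
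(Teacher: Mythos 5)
Your overall architecture (surjectivity from the cell covers \eqref{union_teich_f}, injectivity across cell boundaries, continuity of both directions by pasting real analytic pieces) matches the paper's proof, which likewise builds the inverse $\mathbf{A}'=\bigcup_\mathcal{T}\tilde\Phi_\mathcal{T}\circ\tilde\Omega_\mathcal{T}^{-1}$ and pastes. The gap is concentrated in the single step you describe as ``running the computation of Lemma \ref{intersection} in reverse.'' Lemma \ref{intersection} proves one implication: if a flat metric with weights lies in $\tilde\Phi_\mathcal{T}(D_f(\mathcal{T}))\cap\tilde\Phi_{\mathcal{T}'}(D_f(\mathcal{T}'))$, then $A_\mathcal{T}=A_{\mathcal{T}'}$ there, because the flipped edge's inversive distance computed from the \emph{same} flat metric satisfies \eqref{f}. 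What your injectivity step (and your ``symmetric argument'' for continuity of $\mathbf{A}^{-1}$) actually needs is the converse: if a hyperbolic point $([d],\mathbf{r})$ lies in two hyperbolic cells $\tilde\Omega_\mathcal{T}(D_f(\mathcal{T}))\cap\tilde\Omega_{\mathcal{T}'}(D_f(\mathcal{T}'))$, then the two flat metrics $A'_\mathcal{T}([d],\mathbf{r})$ and $A'_{\mathcal{T}'}([d],\mathbf{r})$ coincide. Knowing that the hyperbolic inversive distances transform by \eqref{f} does not by itself give this: the inversive distance of the flipped diagonal measured \emph{inside the developed Euclidean hinge} of $A'_\mathcal{T}([d],\mathbf{r})$ is the quantity $F$ of Lemma \ref{dfdF}, which in general differs from $f$; one must show $F=f$ on the equality locus of \eqref{ineq_f}, so that the two developed hinges are genuinely congruent. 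The paper does exactly this with a separate geometric argument: embed the universal cover into the Poincar\'e disk so that the common dual vertex $O_{ijk}=O_{ijl}$ sits at the center, form the quadrilateral of Euclidean circle centers via Lemma \ref{diameter}, and check that the two developed hinges are similar with equal radii, hence congruent. Without supplying that argument, or an equivalent computation, both your injectivity and your continuity of $\mathbf{A}^{-1}$ remain unproven.

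On the positive side, your reduction of injectivity to this overlap statement is correct (read off $\mathbf{r}$, obtain a common $[d]$, use Theorem \ref{gbh_whole_del} and Theorem \ref{gbh_unique} to conclude both triangulations are weighted Delaunay for $(\Sigma,d)$ with weight $\mathbf{r}$ and differ by switches on the equality locus), and your pasting-lemma treatment of the continuity of $\mathbf{A}$ itself is sound and somewhat more explicit than the paper's one-sentence justification, with local finiteness supplied by Proposition \ref{cell}. So the proposal is essentially the paper's route with one missing geometric lemma at its core.
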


\begin{proof}
    Since $\mathbf{A}$ is a piecewise real analytic map such that the image at the intersections of the pieces are the same, we know that $\mathbf{A}$ is continuous. To prove that $\mathbf{A}$ is homeomorphic, we directly construct the inverse map of $\mathbf{A}$ and prove that the inverse map is continuous.

    Define
    \[
        A'_\mathcal{T} \coloneqq \tilde\Phi_\mathcal{T} \circ 
        \tilde\Omega_\mathcal{T}^{-1} \colon \tilde\Omega_\mathcal{T}(D_f(\mathcal{T}))
        \to Teich_f(S,V) \times \mathbb{R}_{>0}^V.
    \]
    By Proposition \ref{lift_inject}, we know that this map is real analytic injective.
    From Equation \eqref{union_teich_f}, we have $\image A'_\mathcal{T}=\tilde\Phi_\mathcal{T}(D_f(\mathcal{T})) \subset \widetilde{Teich}_f(S,V)$.
   
    Following Lemma \ref{intersection}, let $\mathcal{T}$ and $\mathcal{T}'$ be two truncated triangulations of $\Sigma$ such that $\tilde\Omega_\mathcal{T}(D_f(\mathcal{T})) \cap \tilde\Omega_{\mathcal{T}'}(D_f(\mathcal{T}')) \ne \varnothing$. 

    Let $([d],\mathbf{r})$ belong to the intersection mentioned above, then both $\mathcal{T}$ and $\mathcal{T}'$ are truncated weighted Delaunay triangulations of the hyperbolic surface $(\Sigma,d)$ with weights $\mathbf{r}$.
    According to Theorem \ref{gbh_whole_del}, $\mathcal{T}$ and $\mathcal{T}'$ differ by a finite number of edge switches.
    Without loss of generality, assume that they differ only by one switch on the hinge $\Diamond_{ij;kl}$. Then in the Voronoi decomposition, the dual 0-cell $O_{ijk}=O_{ijl}\in\Sigma$ of the two faces in the hinge coincides.

    Embed the universal cover of $\Sigma$ into the Poincaré disk $\mathbb{D}$ such that $O_{ijk}$ maps to the Euclidean center of $\mathbb{D}$.
    Then the hinge $\Diamond_{ij;kl}$ is a hyperbolic right-angled octagon in $\mathbb{D}$. 
    According to Lemma \ref{diameter}, the ratio of the radii of the Euclidean circles on which the boundaries $\Gamma_k,\Gamma_i,\Gamma_l,\Gamma_j$ lie is $r_k:r_i:r_l:r_j$, and the inversive distance between the circles is the hyperbolic cosine of the hyperbolic length. 
    Connect the corresponding centers of these four Euclidean circles to form the quadrilateral hinge denoted by $\Diamond'_{ij;kl}$.
    It can be verified that the piecewise flat surface of $A'_\mathcal{T}([d],\mathbf{r})$ restricted on $\Diamond_{ij;kl}$, when developed and embedded into $\mathbb{E}^2$, forms a quadrilateral similar to $\Diamond'_{ij;kl}$ up to a scaling.

    When we switch the hinge $\Diamond_{ij;kl}$ to obtain $\Diamond_{kl;ij}$, and embed the universal cover of $\Sigma$ into $\mathbb{D}$ such that $O_{kli}$ maps to the center of the unit circle. 
    This gives us another quadrilateral $\Diamond'_{kl;ij}$ formed by connecting the centers of the Euclidean circles where the boundaries lie. By the geometry relation, $\Diamond_{ij;kl}$ and $\Diamond'_{kl;ij}$ are congruent up to a rotation.

    Similarly, the piecewise flat surface of $A'_{\mathcal{T}'}([d],\mathbf{r})$ restricted on $\Diamond_{kl;ij}$ developed and embedded into $\mathbb{E}^2$ is similar to $\Diamond'_{kl;ij}$.
    Thus, the hinges $\Diamond'_{ij;kl}$ and $\Diamond'_{kl;ij}$ are similar.
    Furthermore, since both $A'_\mathcal{T}([d],\mathbf{r})$ and $A'_{\mathcal{T}'}([d],\mathbf{r})$ share the same radii, the corresponding radii of $\Diamond'_{ij;kl}$ and $\Diamond'_{kl;ij}$ are the same. Therefore, $\Diamond'_{ij;kl}$ and $\Diamond'_{kl;ij}$ are congruent, and then $A'_\mathcal{T}([d],\mathbf{r})=A'_{\mathcal{T}'}([d],\mathbf{r})$.

    Define $\mathbf{A}' \coloneqq \bigcup_\mathcal{T}A'_\mathcal{T}$, then $\mathbf{A}'$ is piecewise real analytic and continuous at the interfaces of the pieces. By Equation \eqref{union_teich_f}, we know that $\mathbf{A}' \colon Teich(\Sigma) \times \mathbb{R}_{>0}^V \to \widetilde{Teich}_f(S,V)$ is globally continuous. By definition, we can verify that $\mathbf{A}' \circ \mathbf{A} = id_{\widetilde{Teich}_f(S,V)}$, so both $\mathbf{A}$ and $\mathbf{A}'$ are homeomorphic. Namely, the spaces $\widetilde{Teich}_f(S,V)$ and $Teich(\Sigma) \times \mathbb{R}_{>0}^V$ are homeomorphic.
\end{proof}

\subsection{Cell decomposition}

In this subsection, we introduce the result by Schlenker and Dimitrov in paper \cite{schlenker2008circle} and \cite{dimitrov2015hyper}. They proved that $D_f(\mathcal{T})$ in Definition \ref{dft} is simply connected, moreover,
\begin{prop}\label{cell}
    Equation \eqref{union_teich_f} forms a local finite cell decomposition.
\end{prop}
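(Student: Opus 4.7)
The plan is to verify the three defining properties of a locally finite cell decomposition: each $\tilde\Phi_\mathcal{T}(D_f(\mathcal{T}))$ is a closed topological cell of top dimension $|E|+|V|=6g-6+3n+n$; the interiors of distinct cells are disjoint; and the family $\{\tilde\Phi_\mathcal{T}(D_f(\mathcal{T}))\}_\mathcal{T}$ is locally finite in $\widetilde{Teich}_f(S,V)$. The Teichm\"uller-side decomposition will follow by transporting through the homeomorphism $\mathbf{A}$ from Lemma \ref{A_home}.

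For the cell property, I would invoke Schlenker--Dimitrov to get that $D_f(\mathcal{T}) \subset \mathbb{R}_{>1}^{E(\mathcal{T})}\times\mathbb{R}_{>0}^V$ is simply connected. Combined with the fact that it is closed, cut out by finitely many real-analytic inequalities \eqref{ineq_f}, and has non-empty interior (any configuration with strictly positive $h_{ij,k}+h_{ij,l}$ on every edge), this identifies $D_f(\mathcal{T})$ with a closed topological cell. Since $\tilde\Phi_\mathcal{T}\colon Q_f(\mathcal{T}) \to \tilde P_f(\mathcal{T})$ is a real-analytic homeomorphism by Proposition \ref{lift_inject} and the discussion following \eqref{P_f}, the image $\tilde\Phi_\mathcal{T}(D_f(\mathcal{T}))$ inherits the cell structure.

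For disjointness of interiors and the boundary matching, I would use the characterization in Theorems \ref{whole_del_f} and \ref{gbh_whole_del}: a point $([d_f],\mathbf{r})$ lies in the interior of $\tilde\Phi_\mathcal{T}(D_f(\mathcal{T}))$ exactly when every local weighted Delaunay inequality is strict, i.e.\ the weighted Delaunay tessellation of $(S,V,d_f,\mathbf{r})$ has only triangular 2-cells and $\mathcal{T}$ equals it. Such a $\mathcal{T}$ is unique, so interiors cannot overlap. A boundary point corresponds to some inequality becoming an equality, i.e.\ two adjacent triangles merge into a larger polygon of the tessellation; the third bullet of Theorem \ref{local_whole_f} (respectively Theorem \ref{gbh_unique}) then says that all triangulations $\mathcal{T}'$ obtained from $\mathcal{T}$ by diagonal switches inside these polygons are also weighted Delaunay, so the boundary point sits in all these cells simultaneously, giving the compatibility that makes the collection a bona fide cell complex.

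The main obstacle is \emph{local finiteness}: given any $([d_f],\mathbf{r})\in \widetilde{Teich}_f(S,V)$, one must exhibit a neighborhood meeting only finitely many $\tilde\Phi_\mathcal{T}(D_f(\mathcal{T}))$. The plan is to show that for $(d_f',\mathbf{r}')$ in a small neighborhood of $(d_f,\mathbf{r})$, any edge of a weighted Delaunay triangulation admits a uniform length bound depending only on $d_f$, $\mathbf{r}$, and the neighborhood's size. Indeed, the weighted Voronoi 2-cells $Vor_f(v_i)$ have uniformly bounded diameters (controlled by the injectivity radii and the distances $d_f(v_i,v_j)$ on a compact set of metrics), and a weighted Delaunay edge must connect two vertices whose Voronoi 2-cells share a 1-cell, forcing the edge length to be bounded. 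Since there are only finitely many isotopy classes of simple arcs on $(S,V)$ with bounded geodesic length in a fixed compact set of metrics, only finitely many combinatorial triangulations $\mathcal{T}$ can appear, proving local finiteness. Passing through $\mathbf{A}$ and using \eqref{omega_} transports the decomposition to $Teich(\Sigma)\times\mathbb{R}_{>0}^V$, completing the proof.
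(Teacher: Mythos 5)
Your overall architecture (closed cells, disjoint interiors, boundary matching, local finiteness) is sound, and your compactness argument for local finiteness --- uniform diameter bounds on the weighted Voronoi $2$-cells forcing a uniform length bound on weighted Delaunay edges, hence only finitely many isotopy classes of triangulations over a compact neighborhood of metrics --- is a legitimate route, and in fact more explicit than the paper's, which instead observes that the cells containing a given packing correspond to the finitely many ways of switching diagonals inside polygons inscribed in a common orthogonal circle. Your disjointness-of-interiors argument via uniqueness of the weighted Delaunay tessellation also matches the paper's framework.

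The genuine gap is in your first step. You invoke Schlenker--Dimitrov only for the simple connectedness of $D_f(\mathcal{T})$ and then assert that ``simply connected $+$ closed $+$ cut out by finitely many real-analytic inequalities $+$ non-empty interior'' identifies $D_f(\mathcal{T})$ as a closed topological cell. This implication is false in general: $D_f(\mathcal{T})$ sits in a space of dimension $|E|+|V|=6g-6+4n\ge 3$, where simple connectedness says nothing about being homeomorphic to a closed ball (closed semi-analytic simply connected sets with non-empty interior can fail to be cells already in $\mathbb{R}^3$), and it also gives no control of the boundary strata, which you need for the complex to be a bona fide cell decomposition. The content of the cited result, and the step the paper actually leans on, is the real-analytic change of coordinates from $(\mathbf{I},\mathbf{r})$ to $(\mathbf{K},\boldsymbol{\theta})$ (discrete curvatures at vertices and intersection angles of adjacent orthogonal circles along edges), under which the highly nonlinear inequalities \eqref{ineq_f} become \emph{linear} and $D_f(\mathcal{T})$ becomes a convex polytope; both the cell structure and the identification of boundary faces with the equality loci of \eqref{ineq_f} come from that linearization, not from simple connectedness. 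Without this (or some substitute proof that $D_f(\mathcal{T})$ and its boundary strata are cells of the correct dimensions), what you have established is only a locally finite closed cover with disjoint interiors, not a cell decomposition.
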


The inequalities to define the domain like $D_f(\mathcal{T})$ as \eqref{ineq_f} are highly nonlinear. It is hard to know if $D_f(\mathcal{T})$ is simply connected. However, if we find a real analytic coordinate transformation from $(\mathbf{I},\mathbf{r})$ to some coordinate, then $D_f(\mathcal{T})$ is homeomorphic to a polytope defined by a linear programming, which is simply connected. Moreover, the equations of the linear inequalities define the boundary operator.

Schlenker and Dimitrov chose the coordinate $(\mathbf{K},\boldsymbol{\theta})$, which is 
\[
    \mathbf{K} \colon V \to (-\infty,2\pi) \quad v_i \mapsto K_i
    \qquad \boldsymbol{\theta} \colon E \to (0,\pi) \quad
    e_{ij} \mapsto \theta_{ij}
\]
where $K_i$ is the discrete curvature of vertex $v_i$, and $\theta_{ij}$ is the intersection angle of orthogonal circles $\odot O_{ijk}$ and $\odot O_{ijl}$ on edge $e_{ij}$. An inversive distance circle packing with weighted Delaunay triangulation $\mathcal{T}$, or a \emph{hyper-ideal circle pattern} in their papers, is well-defined and unique up to scaling.

The coordinate $(\mathbf{K},\boldsymbol{\theta})$ should satisfy some linear inequalities, which are local Gauss-Bonnet formula on some domains. We omit the detail here. Note that the cell decomposition is uniformly local finite, because for any inversive distance circle packing, the ways to switch the triangulation in the case of sharing a common orthogonal circle is finite.

\subsection{First derivative of the diffeomorphism}

\begin{lem}\label{dfdF}
    Let the hinge $\Diamond_{ij;kl}$ be assigned with radii $p,q,r,s>0$ at vertices $v_k,v_i,v_l,v_j$, and with inversive distances $a,b,c,d,e>1$ at edges $e_{ki}$, $e_{il}$, $e_{lj}$, $e_{jk}$, $e_{ij}$. 
    The corresponding edge lengths of the inversive distance circle metric computed by $L_f$ are denoted as $u,v,w,x,y$. 
    If the faces $f_{ijk}, f_{ijl}$ satisfy the triangle inequalities, we develop and embed $\Diamond_{ij;kl}$ into $\mathbb{E}^2$, and let $z = d_\mathbb{E}(v_k,v_l)$.
    
    Let $F=\frac{z^2-p^2-r^2}{2pr}$, and let $f$ satisfy Equation \eqref{f}. Consider $F$ and $f$ as functions of variables $p,q,r,s,a,b,c,d,e$. When the local weighted Delaunay inequality \eqref{ineq_f} holds with equality, we have
    \[
        dF=df.
    \]
\end{lem}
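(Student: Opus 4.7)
The plan is to establish $F = f$ on the equality locus as a warm-up, then to verify the differential identity $dF = df$ there.

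For the value, by Lemma \ref{local_del_f_lem}, the equality in \eqref{ineq_f} is equivalent to $h_{ij,k} + h_{ij,l} = 0$, which forces $O_{ijk} = O_{ijl}$, so the four circles at $v_k, v_i, v_l, v_j$ share a common orthogonal circle $\odot O$. After developing the hinge into $\mathbb{E}^2$ and sending $\odot O$ to the unit circle by a Möbius transformation, Lemma \ref{diameter} identifies $F = (z^2 - p^2 - r^2)/(2pr)$ with $\cosh$ of the hyperbolic distance between the two geodesics in $\mathbb{D}$ arising from the circles at $v_k$ and $v_l$. By Theorem \ref{teich home formula} this hyperbolic distance equals $\arcch f$, and hence $F = f$ on the equality hypersurface.

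For the differential, since $f$ depends only on $(a,b,c,d,e)$ while $F$ depends on all nine variables, the statement $dF = df$ splits into two sub-claims on the equality locus: (i) $\partial F/\partial p = \partial F/\partial q = \partial F/\partial r = \partial F/\partial s = 0$, and (ii) $\partial F/\partial \alpha = \partial f/\partial \alpha$ for $\alpha \in \{a,b,c,d,e\}$. I would make $F$ explicit using the law of cosines in the hinge,
\[
z^2 = u^2 + v^2 - 2uv \cos(\angle_i^k + \angle_i^l),
\]
with the two angles obtained by the law of cosines in $f_{ijk}, f_{ijl}$ and the edge lengths $u, v, w, x, y$ supplied by $L_f$ as $u^2 = p^2 + q^2 + 2apq$, and so on. The $\sin\angle_i^k \sin\angle_i^l$ cross term of the angle-sum formula rewrites, via Theorem \ref{radius_area}, as a product involving $\sqrt{\Delta_{ade}}, \sqrt{\Delta_{bce}}$ and the orthogonal radii $\rho_{ijk}, \rho_{ijl}$, which coincide on the equality locus. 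One then computes each of the nine partial derivatives of $F$ and compares them to the corresponding partials of $f$, reducing modulo the equality condition and the identities \eqref{delta} for $\sqrt{\Delta_{abf}}, \sqrt{\Delta_{cdf}}$.

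The main obstacle is sheer algebraic complexity: the partials involve nested square roots coming from the edge-length formulas, from the angle-sum formula, and from the various $\sqrt{\Delta}$ factors, and simplifying them modulo the equality relation is lengthy. This matches the paper's remark in the introduction that derivative checks along cell boundaries are carried out by computer code. Conceptually, the identity $dF = df$ says that the analytic function $F - f$ has a double zero along the codimension-one equality hypersurface, i.e.\ is proportional to the square of a defining function for it; this is the expected $C^1$ compatibility across adjacent cells in the decomposition of $\widetilde{Teich}_f(S,V)$ used in Section \ref{sec:diffeo}.
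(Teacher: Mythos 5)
Your skeleton is the right one and it matches the paper's: the warm-up $F=f$ on the equality locus via the shared orthogonal circle is exactly the content of Lemma \ref{intersection}, and the split of $dF=df$ into (i) the vanishing of the four radial partials of $F$ and (ii) the matching of the five inversive-distance partials is the correct reduction. The problem is that the lemma \emph{is} those nine identities, and your proposal stops at ``one then computes each of the nine partial derivatives and compares,'' conceding that the algebra is the obstacle. Differentiating the explicit expression $z^2=u^2+v^2-2uv\cos(\angle_i^k+\angle_i^l)$ directly, with its several layers of nested radicals, is not a workable route even with computer assistance unless it is first rationalized; and your setup brushes against a genuine pitfall: identities that hold only on the equality locus (such as $\rho_{ijk}=\rho_{ijl}$) must not be substituted into the formula for $F$ before differentiating, or you recover only the tangential part of $dF$ rather than the full differential. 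Since you need all nine partials of the \emph{unrestricted} $F$, the order of operations matters.

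What the paper supplies, and what your plan lacks, are the two structural facts that make the verification finite and mechanical. First, it rationalizes: squaring the angle-addition identity $\cos(\alpha+\beta)-\cos\alpha\cos\beta=\sin\alpha\sin\beta$ and substituting the edge-length formulas produces a polynomial relation $X(p,q,r,s,a,b,c,d,e,F)=0$, so every partial of $F$ is obtained by implicit differentiation, $F_\bullet=-X_\bullet/X_F$, with no radicals on the $F$ side; the $f$ side is treated the same way through the polynomial Ptolemy relation $Y(a,b,c,d,e,f)=0$ of \eqref{abcdef}. Second---and this is the key observation---the value $p_0$ of $p$ forced by equality in \eqref{ineq_f} coincides with the critical point $p_1$ of $X$ regarded as a quadratic in $p$, so $X_p=0$ there and $F_p=0$ (hence $F_r=0$ by the symmetry of the hinge) falls out with no further work; the remaining checks ($F_q=0$, $F_a=f_a$, $F_e=f_e$, and their symmetric counterparts) reduce to showing that certain polynomials, after the substitutions $p=p_1$ and $F=f$, contain the factor $Y$, a finite divisibility check carried out in the appendix code. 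Without these reductions, or an actually executed computation in their place, the proof is not yet complete.
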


\begin{proof}
    Recall the notations in Theorem \ref{teich home formula}, in the developed hinge $\Diamond_{ij;kl}$, denoted the inner angle of the face $f_{ijk}$ at the vertex $v_i$ by $\alpha$, and the one of the face $f_{ijl}$ by $\beta$. By cosine law,
    \[
        \cos \alpha = \frac{u^2+y^2-x^2}{2uy}\quad
        \cos \beta = \frac{v^2+y^2-w^2}{2vy} \quad
        \cos(\alpha+\beta)=\frac{u^2+v^2-z^2}{2uv}.
    \]
    Substitute them in the identity $(\cos(\alpha+\beta)-\cos\alpha\cos\beta)^2=(1-\cos^2\alpha)(1-\cos^2\beta)$ and simplify, we have
    \[
        \begin{aligned}
            0=&u^4 w^2+u^2 w^4+v^4 x^2+v^2 x^4+y^4 z^2+y^2 z^4\\
            +&u^2 x^2 y^2+v^2 w^2 y^2+u^2 v^2 z^2+w^2 x^2 z^2\\
            -&u^2 v^2 w^2-u^2 v^2 x^2-u^2 w^2 x^2-v^2 w^2 x^2-w^2 y^2 z^2-x^2 y^2 z^2\\
            -&v^2 x^2 y^2-u^2 w^2 y^2-u^2 w^2 z^2-v^2 x^2 z^2-u^2 y^2 z^2-v^2 y^2 z^2.
        \end{aligned}
    \]

    Substituting the formulas for the edge length, i.e., $u^2=p^2+q^2+2apq$, and the special one $z^2=p^2+r^2+2Fpr$, into the above equation, we have
    \[
        X(p,q,r,s,a,b,c,d,e,F)=0,
    \]
    where $X$ is a ten-variable polynomial of degree eight with each variable having a maximum degree of two. The specific form is too lengthy to be included here.

    Denote the generalized Ptolemy equation \eqref{abcdef} by $Y(a,b,c,d,e,f)=0$. Let
    \begin{equation}
        \begin{aligned}
            p_0 \coloneqq&\frac{\sqrt\Delta _{bce}}{\dfrac{\sqrt\Delta_{cdf}}{q}
            +\dfrac{\sqrt\Delta_{abf}}{s}-\dfrac{\sqrt\Delta_{ade}}{r}}\\
            =&\frac{qrs\sqrt\Delta _{bce}}
            {\frac{(d+ae)\sqrt{\Delta_{bce}}+(c+be) \sqrt{\Delta_{ade}}}{e^2-1}qr+
            \frac{(a+de)\sqrt{\Delta_{bce}}+(b+ce)\sqrt{\Delta_{ade}}}{e^2-1}rs-
            \sqrt{\Delta_{ade}}qs}.
        \end{aligned}
    \end{equation}

    When the inequality \eqref{ineq_f} holds in equation and $f$ satisfies Equation \eqref{f}, we have $p=p_0(q,r,s,a,b,c,d,e)$. What is more important, by Lemma \ref{intersection}, the circles centered at vertices $v_k,v_i,v_l,v_j$ with radii $p,q,r,s$ and inversive distances $a,b,c,d,e,f$ share a common orthogonal circle. This picture means that even through $F \ne f$ in usual, we have $F=f(a,b,c,d,e)$ in this situation.

    Note that $X$ is a quadratic function of $p$, so there exists a unique critical point $p_1=p_1(q,r,s,a,b,c,d,e,F)$ such that $X_p(p_1,q,r,s,a,b,c,e,d,F)=0$. Substituting $F=f(a,b,c,d,e)$ into $p_1$, we have $p_1=p_1(q,r,s,a,b,c,d,e)$. Note that $p_0=p_0(q,r,s,a,b,c,d,e)$, we can verify that $p_0=p_1$ using a symbolic computation software. See codes in appendix.
    By the formula for implicit derivative $F_p=-X_p/X_F$, after substituting $F=f$ and $p=p_1$, we have $F_p=0=f_p$.

    Substituting $F=f$ and $p=p_1$ into $X_q$, $Y_a/Y_f-X_a/X_F$ and $Y_e/Y_f-X_e/X_F$, respectively, and combining them by finding a common denominator and factoring, we see that all the numerators contain the factor $Y(a,b,c,d,e,f)$. Since $f$ satisfies $Y(a,b,c,d,e,f)=0$, the three results are equal to $0$. Then we have $F_q=-X_q/X_F=0$, $Y_a/Y_f-X_a/X_F=0$ and $Y_e/Y_f-X_e/X_F=0$, that is to say $F_q=f_q=0$,$F_a=f_a$ and $F_e=f_e$.

    Using the symmetry of the variables in the expressions for $X$ and $Y$, we can derive other equalities between partial derivatives: $F_r=f_r=F_s=f_s=0$, and $F_b=f_b,F_c=f_c,F_d=f_d$. Thus, Lemma $dF=df$ is proved.
\end{proof}

\begin{thm}\label{c1diff}
    $\mathbf{A}$ is a $C^1$ diffeomorphism.
\end{thm}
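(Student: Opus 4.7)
The plan is to upgrade the homeomorphism $\mathbf{A}$ from Lemma \ref{A_home} to a $C^1$ diffeomorphism by verifying smoothness on cell interiors and then patching across cell interfaces. Inside each open cell $\tilde\Phi_\mathcal{T}(D_f(\mathcal{T}))$ the map coincides with $A_\mathcal{T} = \tilde\Omega_\mathcal{T}\circ\tilde\Phi_\mathcal{T}^{-1}$, which is a composition of two real analytic homeomorphisms, hence real analytic with nonsingular Jacobian; the same applies to $\mathbf{A}^{-1}$. Thanks to the local finite cell decomposition of Proposition \ref{cell}, I only need to verify $C^1$ compatibility at generic codimension-one interfaces, which by induction reduces to two triangulations $\mathcal{T},\mathcal{T}'$ differing by a single flip of a hinge $\Diamond_{ij;kl}$.

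Next I would work in a common pair of local charts. The $\mathcal{T}$ coordinates $(\mathbf{I},\mathbf{r})=\tilde\Phi_\mathcal{T}^{-1}(\cdot)$ extend from the $\mathcal{T}$ side across the interface into a full neighborhood in $Q_f(\mathcal{T})$, because only the local weighted Delaunay inequality for $e_{ij}$ fails on the $\mathcal{T}'$ side while the triangle inequalities defining $Q_f(\mathcal{T})$ still hold there. On the output side I would use the analytic $\mathcal{T}'$ chart $\Omega_{\mathcal{T}'}^{-1}$. With these choices, $\mathbf{A}$ reads as follows: the components along every unflipped edge are $\arcch$ of the corresponding inversive distance (analytic and identical on both sides), and the component along the new diagonal $e_{kl}$ is $\arcch f(\mathbf{I})$ on the $\mathcal{T}$ side via Theorem \ref{teich home formula}, and $\arcch F(\mathbf{I},\mathbf{r})$ on the $\mathcal{T}'$ side, where $F=(z^2-p^2-r^2)/(2pr)$ is the Euclidean inversive distance obtained by developing $\Diamond_{ij;kl}$ into $\mathbb{E}^2$.

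The $C^1$ gluing then reduces to comparing $\arcch f$ with $\arcch F$ along the interface. Value agreement is supplied by Lemma \ref{intersection}, which forces $F=f$ whenever the four vertex circles share a common orthogonal circle, which is exactly the boundary locus between the two cells. Derivative agreement is precisely Lemma \ref{dfdF}, giving $dF=df$ on the interface; applying the chain rule, $d(\arcch F)=dF/\sqrt{F^2-1}$ equals $df/\sqrt{f^2-1}=d(\arcch f)$ there. Hence the two analytic pieces of $\mathbf{A}$ are $C^1$-compatible across every generic interface, and by local finiteness this suffices to conclude $\mathbf{A}\in C^1$. The same argument, with the roles of $(\mathcal{T},\mathcal{T}')$ and of the Euclidean and hyperbolic sides swapped, applies to $A'_\mathcal{T}=\tilde\Phi_\mathcal{T}\circ\tilde\Omega_\mathcal{T}^{-1}$ and yields $C^1$ compatibility of $\mathbf{A}^{-1}$; since each analytic piece has nonsingular Jacobian, $\mathbf{A}$ is a $C^1$ diffeomorphism.

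The main obstacle is already absorbed into Lemma \ref{dfdF}: passing from mere value agreement $F=f$ to first-order agreement $dF=df$ requires the polynomial identities rooted in the generalized Ptolemy equation \eqref{abcdef} together with the symbolic verification that $p_0=p_1$. Once that lemma is in hand, the remainder of the proof is bookkeeping about cells, chart changes, and local finiteness; the only care needed is to choose a single output chart on the hyperbolic side so that $f$ (from the hyperbolic Ptolemy formula \eqref{f}) and $F$ (from the Euclidean development) can be compared in the same coordinates.
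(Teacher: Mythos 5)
Your proposal is correct and follows essentially the same route as the paper: reduce to a single edge flip via the local finite cell decomposition of Proposition \ref{cell}, then match the first derivatives of the Euclidean and hyperbolic transition maps on the flipped edge using Lemma \ref{dfdF} (with Lemma \ref{intersection} supplying value agreement $F=f$ on the interface). The differences are purely presentational: you fix common input/output charts and carry the $\arcch$ through by the chain rule, where the paper compares the Jacobians of $\tilde\Omega_{\mathcal{T}'}\circ\tilde\Omega_\mathcal{T}^{-1}$ and $\tilde\Phi_{\mathcal{T}'}\circ\tilde\Phi_\mathcal{T}^{-1}$ directly in the inversive-distance coordinates.
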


\begin{proof}
    For any $([d_f],\mathbf{r}) \in \widetilde{Teich}_f(S,V)$, if there exists a triangulation $\mathcal{T}$ such that $([d_f],\mathbf{r}) \in \interior \tilde\Omega_\mathcal{T}(D_f(\mathcal{T}))$, we know that $\mathbf{A}$ is smooth near $\tilde\Omega_\mathcal{T}([d_f],\mathbf{r})$ because $A_\mathcal{T}$ is real analytic.

    If not, there still exists a triangulation $\mathcal{T}$ such that $([d_f],\mathbf{r}) \in \tilde\Omega_\mathcal{T}(D_f(\mathcal{T}))$. In this case, since $\tilde\Omega_\mathcal{T}$ is homeomorphic, we have $([d_f],\mathbf{r}) \in \partial\tilde\Omega_\mathcal{T}(D_f(\mathcal{T})) = \tilde\Omega_\mathcal{T}(\partial D_f(\mathcal{T}))$. 
    By the real analytic cell decomposition structure in Proposition \ref{cell}, there exist a finite number of triangulations denoted by $\mathcal{T}_1=\mathcal{T},\dots,\mathcal{T}_k,k \ge 2$, such that they differ from $\mathcal{T}$ by a finite number of edge switches, which means $([d_f],\mathbf{r}) \in \tilde\Omega_\mathcal{T}(\bigcap_{i=1}^k D_f(\mathcal{T}_i))$.
    Moreover, there exist a neighborhood $U$ such that $([d_f],\mathbf{r}) \in U \subset \tilde\Omega_\mathcal{T}(\bigcup_{i=1}^k D_f(\mathcal{T}_i))$. To prove that $\mathbf{A}$ is $C^1$ near $\tilde\Omega_\mathcal{T}([d_f],\mathbf{r})$, it is sufficient to show that $dA_{\mathcal{T}}=dA_{\mathcal{T}'}$ at $\tilde\Omega_\mathcal{T}([d_f],\mathbf{r})$ for any $\mathcal{T}' \in \left\{\mathcal{T}_2,\dots,\mathcal{T}_k\right\}$.
    
    Consider $\tilde\Omega_{\mathcal{T}'} \circ \tilde\Omega_\mathcal{T}^{-1}$ and $\tilde\Phi_{\mathcal{T}'} \circ \tilde\Phi_\mathcal{T}^{-1}$.
    Firstly, we show that $d(\tilde\Omega_{\mathcal{T}'} \circ \tilde\Omega_\mathcal{T}^{-1}) = d(\tilde\Phi_{\mathcal{T}'} \circ \tilde\Phi_\mathcal{T}^{-1})$ holds at $([d_f],\mathbf{r})$, if $\mathcal{T}$ and $\mathcal{T}'$ are differed by one edge switch.
    By observing the coordinates of these two maps, we see that they are identity on every coordinate except the switched one. The two different inversive distance values (i.e., $f$ and $F$ in Lemma \ref{dfdF}) have the same first-order partial derivatives with respect to other coordinates by Lemma \ref{dfdF}.
    Thus, the two maps $\tilde\Omega_{\mathcal{T}'} \circ \tilde\Omega_\mathcal{T}^{-1}$ and $\tilde\Phi_{\mathcal{T}'} \circ \tilde\Phi_\mathcal{T}^{-1}$ have the same Jacobi matrices.
    Then, by $\tilde\Omega_{\mathcal{T}'}$ and $\tilde\Phi_\mathcal{T}$ are real analytic homeomorphic, we know that $dA_{\mathcal{T}}=dA_{\mathcal{T}'}$ holds at $\tilde\Omega_\mathcal{T}([d_f],\mathbf{r})$.
    For $\mathcal{T}$ and $\mathcal{T}'$ differed by finite switches, just repeat the steps above for finite times.

    Therefore, we have proved that $\mathbf{A}$ is a global $C^1$ diffeomorphism. It can be verified that $\mathbf{A}$ is not a $C^2$ diffeomorphism, but this is irrelevant and omitted.
\end{proof}

\section{Proof of the main theorem}\label{sec:proof}

\subsection{Discrete conformal equivalence}
The discrete conformal equivalent class of inversive distance circle packing on polyhedral surface is defined as follows.

\begin{defn}
    Given a marked surface $(S,V)$, suppose $d_f$ and $d_f'$ are two piecewise flat metrics with a legal weight function $\mathbf{r} \in R(d_f)$ and $\mathbf{r}' \in R(d_f')$ respectively, we say $([d_f],\mathbf{r})$ and $([d_f'],\mathbf{r}')$ are \emph{discrete conformal equivalent} for inversive distance circle packing, if the first component of $\mathbf{A}([d_f],\mathbf{r})$ and $\mathbf{A}([d_f'],\mathbf{r}')$ are equal, which means two derived hyperbolic metrics with geodesic boundaries are isotopic.
\end{defn}

Since $\mathbf{A}$ is homeomorphic, this definition of equivalence has reflexivity, symmetry, and transitivity, thus it is well-defined. Then we have the following proposition.

\begin{prop}
    Given a marked surface $(S,V)$ and its related surface $\Sigma$, for any discrete conformal equivalent class, it can be represented by
    \[
        \mathbf{A}^{-1}\left(\left\{[d]\right\} \times \mathbb{R}_{>0}^V\right)
        \subset \widetilde{Teich}_f(S,V),
    \]
    where $[d]$ is an isometry class isotopic to identity on the hyperbolic surface with geodesic boundaries. 
\end{prop}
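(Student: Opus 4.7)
The plan is simply to unpack the definition of discrete conformal equivalence and read off the fiber structure from the homeomorphism $\mathbf{A}$ already established in Lemma \ref{A_home}. The key preliminary observation is that $\mathbf{A}$ acts trivially on the radius factor, so the statement of the proposition is really just an explicit description of the level sets of $\mathrm{proj}_1\circ\mathbf{A}\colon \widetilde{Teich}_f(S,V)\to Teich(\Sigma)$.

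First I would verify that $\mathbf{A}$ preserves the $\mathbb{R}^V_{>0}$-coordinate. On every cell $\tilde P_f(\mathcal{T})$, the formulas $\tilde\Phi_\mathcal{T}(\mathbf{I},\mathbf{r})=(\Phi_\mathcal{T}\circ L_f(\mathbf{I},\mathbf{r}),\mathbf{r})$ and $\tilde\Omega_\mathcal{T}(\mathbf{I},\mathbf{r})=(\Omega_\mathcal{T}\circ\arcch\mathbf{I},\mathbf{r})$ both fix $\mathbf{r}$, so their composition $A_\mathcal{T}$ does as well, and consequently so does the glued map $\mathbf{A}$. In particular, for any $([d_f],\mathbf{r})$ we may write $\mathbf{A}([d_f],\mathbf{r})=([d],\mathbf{r})$ with $[d]\in Teich(\Sigma)$ determined by $([d_f],\mathbf{r})$.

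Next I would fix $([d_f],\mathbf{r})$ and set $[d]$ to be the first coordinate of $\mathbf{A}([d_f],\mathbf{r})$. By the defining condition of discrete conformal equivalence, $([d_f'],\mathbf{r}')$ belongs to the same class iff the first coordinate of $\mathbf{A}([d_f'],\mathbf{r}')$ equals $[d]$, i.e. iff $\mathbf{A}([d_f'],\mathbf{r}')\in\{[d]\}\times\mathbb{R}^V_{>0}$. Applying the bijection $\mathbf{A}^{-1}$ (Lemma \ref{A_home}) gives exactly $\mathbf{A}^{-1}(\{[d]\}\times\mathbb{R}^V_{>0})$. Surjectivity of $\mathbf{A}$ ensures that every $[d]\in Teich(\Sigma)$ actually arises as such a label, so each fiber is nonempty, and injectivity of $\mathbf{A}$ ensures that distinct $[d]$ give disjoint classes, which justifies the ``it can be represented by'' phrasing.

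No genuine obstacle is expected: all the real work lies upstream, in the construction of $\mathbf{A}$, the weight-preserving formulas for $\tilde\Phi_\mathcal{T}$ and $\tilde\Omega_\mathcal{T}$, and the homeomorphism statement of Lemma \ref{A_home}. The proposition itself is essentially a definition chase plus an appeal to bijectivity, and I would present it in that order: (i) the weight-coordinate remark, (ii) the iff equivalence giving the representation, (iii) nonemptiness via surjectivity of $\mathbf{A}$.
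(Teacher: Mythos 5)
Your proposal is correct and matches the paper's treatment: the paper states this proposition without proof, as an immediate consequence of the definition of discrete conformal equivalence (agreement of the first component of $\mathbf{A}$) together with the fact that $\mathbf{A}$ is a homeomorphism onto $Teich(\Sigma)\times\mathbb{R}_{>0}^V$ (Lemma \ref{A_home}). Your additional remark that $\mathbf{A}$ fixes the radius coordinate is true but not needed for the fiber description, which only involves the first component.
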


Here is the main theorem of this paper.

\begin{thm}\label{main_f}
    Given a marked surface $(S,V)$ with $n$ vertices, for any $([d_f],\mathbf{r}) \in \widetilde{Teich}_f(S,V)$ and target discrete curvature
    \[
        \bar{\mathbf K} \colon V \to (-\infty,2\pi) \quad v_i \mapsto \bar K_i
    \]
    satisfies the Gauss-Bonnet formula $\sum_{i=1}^n\bar K_i=2\pi \chi(S)$, there exist $([d_f'],\mathbf{r}') \in \widetilde{Teich}_f(S,V)$ discrete conformal equivalent to $([d_f],\mathbf{r})$, such that the discrete curvature of the piecewise flat metric $d_f'$ at the $v_i \in V$ is equal to $\bar K_i$. This tuple of metric and radii is unique up to global scaling.
\end{thm}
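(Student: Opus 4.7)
The plan is to fix the discrete conformal class of $([d_f],\mathbf{r})$ and vary only the radii, recasting the problem as finding a critical point of a convex Ricci energy on the fiber. Let $[d]$ be the first coordinate of $\mathbf{A}([d_f],\mathbf{r}) \in Teich(\Sigma)$, and set $\mathbf{F}_{[d]} := \mathbf{A}^{-1}(\{[d]\} \times \mathbb{R}_{>0}^V)$. By Theorem \ref{c1diff}, $\mathbf{A}$ restricts to a $C^1$ diffeomorphism between $\mathbf{F}_{[d]}$ and $\mathbb{R}_{>0}^V$, so I take $\mathbf{u} := (\log r_1,\dots,\log r_n)$ as global coordinates on the fiber. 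Because inversive distances are pinned by $[d]$ while radii scale freely, global scaling $r_i \mapsto \lambda r_i$ acts on $\mathbf{F}_{[d]}$ as translation along $(1,\dots,1)$ in $\mathbf u$. Proposition \ref{cell} together with the Schlenker--Dimitrov coordinates $(\mathbf K, \boldsymbol{\theta})$ then implies that $\mathbf{F}_{[d]}$ meets only finitely many cells $\tilde P_f(\mathcal T)$.

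Following Guo, on the interior of each cell the curvatures $K_i(\mathbf u)$ are real analytic, and Schl\"afli's formula applied to the generalized hyperbolic polyhedra associated to the triangles yields the symmetry $\partial K_i/\partial u_j = \partial K_j/\partial u_i$. Hence the $1$-form
\[
    \omega := \sum_{i=1}^n (K_i - \bar K_i)\, du_i
\]
is closed on each cell, and by Theorem \ref{c1diff} the functions $K_i$ extend $C^1$-smoothly across cell interfaces, so $\omega$ is a globally defined continuous closed $1$-form on $\mathbf{F}_{[d]}$. Integrating along any piecewise-smooth path gives a well-defined $C^2$ Ricci energy
\[
    \mathcal E(\mathbf u) := \int_{\mathbf u_0}^{\mathbf u} \omega,
\]
with $\nabla \mathcal E = \mathbf K - \bar{\mathbf K}$. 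On the interior of each cell, $\hess \mathcal E$ is Guo's geometric Hessian, positive semidefinite with kernel exactly $\mathbb R(1,\dots,1)$; continuity across interfaces shows that $\mathcal E$ is $C^2$ on $\mathbf{F}_{[d]}$ and strictly convex modulo the scaling direction.

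The Gauss-Bonnet hypothesis $\sum \bar K_i = 2\pi\chi(S) = \sum K_i$ says $\omega$ annihilates the scaling direction, so $\mathcal E$ descends to a strictly convex $C^2$ function on the quotient of $\mathbf{F}_{[d]}$ by global scaling. Any critical point of this quotient energy corresponds to a configuration $([d_f'],\mathbf r')$ conformal to $([d_f],\mathbf r)$ with prescribed curvature $\bar{\mathbf K}$, and strict convexity makes it unique up to global scaling. Existence is obtained by running the negative gradient flow $\dot u_i = \bar K_i - K_i$ (the discrete Ricci flow); within a cell this is a real analytic ODE, and whenever the flow reaches a cell interface the triangulation is surgically flipped via the generalized Ptolemy Equation \eqref{f}, which by Lemma \ref{intersection} represents the same piecewise flat surface after the flip so the flow continues seamlessly.

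The main obstacle is showing that the flow does not blow up. The ingredients are already in place: Theorem \ref{del_in_trig_f} ensures triangle inequalities hold throughout each cell so no triangle can degenerate inside a cell; Lemma \ref{intersection} lets the flow cross any cell wall by a legal edge flip; and Proposition \ref{cell} bounds the number of cells the fiber can visit, ruling out infinite chattering. Combined with a properness argument for $\mathcal E$ on the quotient (using that each of the finitely many cells is convex in $\mathbf u$ and that $\mathcal E$ grows when radii tend to $0$ or $\infty$ along directions transverse to scaling), this forces convergence of the flow to the unique minimizer, completing the existence half of the theorem and yielding uniqueness up to scaling from strict convexity.
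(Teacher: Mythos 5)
Your setup — the fiber $\mathbf{F}_{[d]}$, the $\log$-radius coordinates, the finitely many cells, the $C^{1}$ matching of $K_i$ across cell walls via Theorem \ref{c1diff}, the closed form $\omega$, and the $C^{2}$ energy $\mathcal{E}$ that is strictly convex modulo the scaling direction — matches the paper's Section \ref{sec:proof} and gives uniqueness correctly. The gap is in existence. You reduce it to ``a properness argument for $\mathcal{E}$ on the quotient (using that \dots\ $\mathcal{E}$ grows when radii tend to $0$ or $\infty$ along directions transverse to scaling),'' but that parenthetical \emph{is} the theorem: a strictly convex $C^{2}$ function on $\mathbb{R}^{n-1}$ (e.g.\ $e^{x}$ on $\mathbb{R}$) need not attain a minimum, and the assertion that $\mathcal{E}$ is coercive for an \emph{arbitrary} admissible target $\bar{\mathbf K}$ is exactly equivalent to the surjectivity you are trying to prove. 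Convexity of each cell and finiteness of the cell decomposition rule out chattering but say nothing about why the gradient flow cannot escape to infinity with $\mathcal{E}$ bounded. Nothing in your sketch supplies this.

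The paper does not prove coercivity of $\mathcal{E}$ at all; it proves surjectivity of the curvature map $\kappa_d|_U\colon U\to K$ topologically. Injectivity (from convexity) plus Brouwer's invariance of domain makes $\kappa_d|_U$ an open map, and the crux is a boundary-to-boundary statement: for any sequence $\mathbf{u}^{(m)}\to\mathbf{u}^{\infty}\in[-\infty,+\infty]^{n}\setminus U$, a subsequence of $\kappa_d(\mathbf{u}^{(m)})$ converges to $\partial K$. This is where the weighted Delaunay structure earns its keep: after pigeonholing the sequence into a single cell $U_j$, one splits $V$ into ``good'' vertices ($u_i\to-\infty$) and ``bad'' ones, shows via the triangle inequalities guaranteed by Theorem \ref{del_in_trig_f} that no triangle can have exactly one edge joining two bad vertices, locates a bad vertex all of whose neighbors are good, and computes that its curvature tends to $2\pi$ — i.e.\ the image leaves every compact subset of $K$. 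Openness plus boundary-to-boundary then forces $\image\kappa_d=K$ by a path-connectedness argument. This degeneration analysis is the essential content of the existence half, and your proposal needs it (or an honest coercivity estimate playing the same role) to be complete; as written, the existence claim is assumed rather than proved.
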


\subsection{Variational principle}

The following Lemma and Theorem can be found in \cite{guo2011local} and \cite{zhang2014unified}.

\begin{lem}\label{diff_trig}
    Given a marked surface $(S,V)$ with a triangulation $\mathcal{T}=(V,E,F)$, for any $(\mathbf{I},\mathbf{r}) \in Q_f(\mathcal{T})$, construct the inversive distance circle packing $L_f(\mathbf{I},\mathbf{r})=\mathbf{l}$. Denote the inner angle of $f_{ijk}$ at vertices $v_i,v_j,v_k$ by $\theta_i,\theta_j,\theta_k$, and the opposite edge lengths by $l_i,l_j,l_k$. Let $u_i= \log r_i$ for every $v_i$, following the notation of $h_{ij,k}$ in definition \ref{loc_del}, the Jacobi matrix
    \[
        \frac{\partial(\theta_i,\theta_j,\theta_k)}{\partial(u_i,u_j,u_k)}=
        \left(
            \begin{matrix}
    -\frac{h_{j,ik}}{l_j}-\frac{h_{k,ij}}{l_k} & \frac{h_{k,ij}}{l_k} & \frac{h_{j,ik}}{l_j} \\
    \frac{h_{k,ij}}{l_k} & -\frac{h_{i,jk}}{l_i}-\frac{h_{k,ij}}{l_k} & \frac{h_{i,jk}}{l_i} \\
    \frac{h_{j,ik}}{l_j} & \frac{h_{i,jk}}{l_i} & -\frac{h_{i,jk}}{l_i}-\frac{h_{j,ik}}{l_j}
            \end{matrix}
        \right)
    \]
    is symmetric, semi-negative definite, with a one-dimensional null space and eigenvector $(1,1,1)$.
\end{lem}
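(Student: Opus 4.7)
The plan is to compute the Jacobian entries one at a time via the chain rule, then read off the three properties from the resulting closed form.

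First, I compute $\partial l_k / \partial u_i$ where $l_k = e_{ij}$. Direct differentiation of $l_k^2 = r_i^2 + r_j^2 + 2 I_{ij} r_i r_j$ with $u_i = \log r_i$ gives
\[
\frac{\partial l_k}{\partial u_i} = \frac{r_i(r_i + I_{ij} r_j)}{l_k} = \frac{l_k^2 + r_i^2 - r_j^2}{2 l_k}.
\]
This has a clean geometric meaning: if $d_{ij}^k$ denotes the distance from $v_i$ to the foot of the perpendicular from $O_{ijk}$ onto $e_{ij}$, the orthogonality relations $d_{ij}^{k\,2} + h_{k,ij}^2 = r_i^2 + \rho_{ijk}^2$ and $d_{ji}^{k\,2} + h_{k,ij}^2 = r_j^2 + \rho_{ijk}^2$ combined with $d_{ij}^k + d_{ji}^k = l_k$ yield $d_{ij}^k = (l_k^2 + r_i^2 - r_j^2)/(2 l_k)$, so $\partial l_k / \partial u_i = d_{ij}^k$.

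Next I differentiate the law of cosines $l_i^2 = l_j^2 + l_k^2 - 2 l_j l_k \cos\theta_i$ and invoke the projection identities $l_k - l_j \cos\theta_i = l_i \cos\theta_j$ to obtain
\[
d\theta_i = \frac{l_i}{2A}\bigl(dl_i - \cos\theta_k \, dl_j - \cos\theta_j \, dl_k\bigr),
\]
with $2A = l_j l_k \sin\theta_i$. Chain-ruling with the previous step gives, for a typical off-diagonal entry,
\[
\frac{\partial \theta_k}{\partial u_i} = \frac{l_k}{2A}\bigl(d_{ij}^k - \cos\theta_i\, d_{ik}^j\bigr).
\]
I would place the triangle in $\mathbb{E}^2$ with $v_i$ at the origin and $e_{ij}$ along the positive $x$-axis; then $d_{ij}^k = O_x$ and $d_{ik}^j = O_x \cos\theta_i + O_y \sin\theta_i$, so the bracket equals $\sin\theta_i \cdot(O_x \sin\theta_i - O_y \cos\theta_i)$. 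The second factor is precisely the signed distance $h_{j,ik}$ from $O_{ijk}$ to the edge $e_{ik}$ (with the sign convention of Definition \ref{loc_del}), and the prefactor $l_k \sin\theta_i / (2A)$ equals $1/l_j$. Hence $\partial \theta_k/\partial u_i = h_{j,ik}/l_j$, matching the $(3,1)$ entry. All other off-diagonal entries follow by cyclic symmetry, and the diagonal entries are determined by $d\theta_i + d\theta_j + d\theta_k = 0$, yielding $\partial \theta_i/\partial u_i = -h_{k,ij}/l_k - h_{j,ik}/l_j$.

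With the closed form in hand, symmetry is immediate: $\partial \theta_a/\partial u_b = h_{c,ab}/l_{ab} = \partial \theta_b/\partial u_a$ where $c$ is the third vertex. Setting $x = h_{j,ik}/l_j$, $y = h_{k,ij}/l_k$, $z = h_{i,jk}/l_i$, the matrix is the negative weighted graph Laplacian of the triangle, whose quadratic form is
\[
v^\top M v = -x(v_i-v_k)^2 - y(v_i-v_j)^2 - z(v_j-v_k)^2,
\]
and the kernel manifestly contains $(1,1,1)^\top$. The main obstacle is the semi-negative definiteness together with one-dimensionality of the kernel, because in the inversive-distance regime an individual weight $x$, $y$, or $z$ can be negative when $O_{ijk}$ sits outside the triangle. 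I would close this gap either by direct verification that the pairwise sums $x+y,\ y+z,\ z+x$ and the elementary symmetric polynomial $xy+yz+zx$ are nonnegative on the admissible domain $Q_f(\mathcal{T})$, or more conceptually by identifying $\int \sum \theta_i\, du_i$ with the volume of an associated generalized hyperbolic tetrahedron via the Schl\"afli formula and transferring concavity of that volume to negative semi-definiteness of its Hessian, which is precisely the matrix under study.
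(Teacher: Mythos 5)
Your computation of the Jacobian entries is correct and complete: the identity $\partial l_k/\partial u_i = r_i(r_i+I_{ij}r_j)/l_k=(l_k^2+r_i^2-r_j^2)/(2l_k)=d_{ij}^k$, the differentiated cosine law, and the coordinate calculation identifying the bracket with $\sin\theta_i\,h_{j,ik}$ together yield exactly the stated matrix, and the symmetry together with the kernel vector $(1,1,1)$ (scale invariance of the angles) follow immediately. For what it is worth, the paper does not prove this lemma at all --- it is quoted from \cite{guo2011local} and \cite{zhang2014unified} --- so on the formula itself you have supplied more detail than the source text.

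The genuine gap is the negative semi-definiteness with exactly one-dimensional kernel, which you correctly flag but do not close; this is precisely the content on which the paper's variational argument (Proposition \ref{convex} and the proof of Theorem \ref{main_f}) rests. Your route (a) is not secure as stated. What is needed for negative definiteness on $(1,1,1)^{\perp}$ is $x+y+z>0$ \emph{and} $xy+yz+zx>0$ (trace and determinant of the restricted form), and neither follows from anything you have established. The positivity of all the $d_{ab}^{c}$ (a consequence of $I>1$, indeed of $I>0$) does confine $O_{ijk}$ to the union of the triangle with the three edge regions, so at most one of $x,y,z$ is negative; but when, say, $z<0$ this gives neither the pairwise sums $x+z,\,y+z\ge 0$ nor the determinant inequality $xy>|z|(x+y)$, and the latter is exactly the nontrivial estimate that occupies the bulk of Guo's proof and is where the hypothesis on the inversive distance genuinely enters --- it is not a formal consequence of the Laplacian structure, since the claim must hold on all of $Q_f(\mathcal{T})$ and not merely on the weighted Delaunay part $D_f(\mathcal{T})$ where all three weights are controlled. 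Route (b), via Schl\"afli's formula and concavity of hyperbolic volume, is the approach of \cite{bobenko2015discrete} and of the works the paper cites, and would succeed, but naming it is not carrying it out. As it stands the proposal proves the formula, the symmetry, and the kernel, but not the definiteness.
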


Since $K_i=2\pi-\sum_{v_i \in f} \theta_i$, by the lemma above, we have the following theorem.

\begin{thm}\cite{guo2011local}\cite{zhang2014unified}\label{diff_all}
   
    Given a piecewise flat surface $(S,V,d_f)$ and weight $\mathbf{r} \in R_2(d_f)$, denote the discrete curvature at vertices by $\mathbf{K}$, then the weighted Delaunay triangulation $\mathcal{T}$ satisfies $([d_f],\mathbf{r}) \in \tilde\Phi_\mathcal{T}(D_f(\mathcal{T}))$.
    Denote its dual weighted Voronoi decomposition by $\mathcal{C}=(V^*,E^*,F^*)$. Let $l_{ij}=\mathbf{l}(e_{ij})$, then the length of the dual edge $e_{ij}^* \in E^*$ is denoted by $l^*_{ij}$. The elements of Jacobi matrix $\frac{\partial \mathbf{K}}{\partial \mathbf{u}}$ is
    \[
        \left(\frac{\partial \mathbf{K}}{\partial \mathbf{u}}\right)_{ij}=
        \begin{cases}
            -\dfrac{l^*_{ij}}{l_{ij}} & e_{ij} \in E\\
            \sum_{e_{ik} \in E}\dfrac{l^*_{ik}}{l_{ik}} & j=i\\
            0 & \text{otherwise},
        \end{cases}         
    \]
    where $\mathbf{u}=\log(\mathbf{r})$. This matrix is symmetric, semi-negative definite, with a one-dimensional null space and eigenvector $(1,\dots,1) \in \mathbb{R}^V$.

\end{thm}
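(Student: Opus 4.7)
The plan is to assemble the global Jacobian face by face, using Lemma \ref{diff_trig} as the local ingredient and the weighted Voronoi--Delaunay duality to interpret the sum. First I would fix the weighted Delaunay triangulation $\mathcal{T}$ associated to $([d_f],\mathbf{r})$ (which exists and is well-defined by Theorem \ref{local_whole_f}, since by assumption $([d_f],\mathbf{r}) \in \tilde{\Phi}_\mathcal{T}(D_f(\mathcal{T}))$) and decompose the curvature at every vertex as
\[
    K_i \;=\; 2\pi \;-\; \sum_{f_{ijk} \ni v_i} \theta_i^{(ijk)},
\]
where $\theta_i^{(ijk)}$ denotes the inner angle at $v_i$ in the face $f_{ijk}$. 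Differentiating in $u_j = \log r_j$ and applying Lemma \ref{diff_trig} face by face, one sees that only the two faces sharing the edge $e_{ij}$ contribute to $\partial K_i / \partial u_j$ when $i \ne j$, and every face at $v_i$ contributes to $\partial K_i / \partial u_i$.

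The heart of the argument is to identify, for the hinge $\Diamond_{ij;kl}$, the quantity $h_{ij,k} + h_{ij,l}$ with the length $l^*_{ij}$ of the dual edge in the weighted Voronoi decomposition. By construction, the dual vertex of $f_{ijk}$ is the center $O_{ijk}$ of the orthogonal circle, and the dual edge $e^*_{ij}$ is the geodesic segment $O_{ijk} O_{ijl}$, which by the orthogonality of the three circles through each face is perpendicular to $e_{ij}$. Therefore its length is exactly $h_{ij,k} + h_{ij,l}$, where the signs in Definition \ref{loc_del} are arranged precisely so that this sum equals the signed length of the dual edge. Because $\mathcal{T}$ is weighted Delaunay, this sum is non-negative, in agreement with $l^*_{ij} \ge 0$. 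Collecting the contributions from the two faces $f_{ijk}$ and $f_{ijl}$ adjacent to $e_{ij}$, and using the off-diagonal entry $h_{j,ik}/l_j$ from Lemma \ref{diff_trig} (which, after reindexing, becomes $h_{ij,k}/l_{ij}$), gives $-l^*_{ij}/l_{ij}$ for the $(i,j)$-entry with $i \ne j$. The diagonal entry then falls out from the fact that each per-face block has row sums zero, so summing over all faces at $v_i$ produces $\sum_{e_{ik} \in E} l^*_{ik}/l_{ik}$.

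Symmetry is immediate from the closed formula. For semi-negative definiteness, I would argue that the global Jacobian is, up to embedding into $\mathbb{R}^V$, a sum over faces of the $3 \times 3$ matrices of Lemma \ref{diff_trig}, each of which is symmetric semi-negative definite; the sum of such matrices is semi-negative definite. For the null space, observe that if $\mathbf{v} \in \mathbb{R}^V$ satisfies $\mathbf{v}^\top \frac{\partial \mathbf K}{\partial \mathbf u} \mathbf{v} = 0$, then each per-face quadratic form vanishes on $\mathbf{v}$, forcing $\mathbf{v}$ to be constant on the three vertices of every face; since the 1-skeleton of $\mathcal{T}$ is connected, $\mathbf{v}$ is constant on all of $V$. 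Conversely $(1,\ldots,1)$ manifestly lies in the kernel because each row sum is zero.

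The main obstacle I expect is the signed identification $l^*_{ij} = h_{ij,k} + h_{ij,l}$: one has to be careful that the sign convention on $h_{ij,k}$ (positive when $O_{ijk}$ and $v_k$ are on the same side of $e_{ij}$, negative otherwise) is exactly the one that makes the dual edge, viewed inside the Voronoi cell structure, have length equal to this signed sum, and that the weighted Delaunay hypothesis is precisely what guarantees non-negativity. Once this geometric identification is made, everything else is linear algebra and the face-by-face bookkeeping.
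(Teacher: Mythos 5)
Your proposal is correct and follows essentially the same route the paper takes: the paper's own justification is precisely that the global Jacobian is the sparse matrix obtained by summing the per-face blocks of Lemma \ref{diff_trig} (citing \cite{guo2011local} and \cite{zhang2014unified} for the details), with the dual-edge identification $l^*_{ij}=h_{ij,k}+h_{ij,l}$ and the connectivity argument for the null space exactly as you describe. Your write-up merely makes explicit the bookkeeping the paper leaves to the references.
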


The Jacobi matrix above is a sparse matrix formed by combining matrices from Lemma \ref{diff_trig}. If there is a face glued by itself in the triangulation, we can simply add the elements at the corresponding positions of the matrix, and the conclusion remains unchanged.

Lemma \ref{diff_trig} and Theorem \ref{diff_all} describe the local differential properties. With the help of Theorem \ref{c1diff}, we can define the curvature map globally.

\begin{defn}\label{kappa}
    Suppose $\Sigma$ is the related surface of $(S,V)$ with $n$ vertices. Given a hyperbolic metric with geodesic boundaries $d$ on $\Sigma$, define that  
    \[
        \kappa_d \colon \mathbb{R}^n \to (-\infty,2\pi)^n \quad
        \mathbf{u} \mapsto \mathbf{K}=(K_1,\dots,K_n),
    \]
    where $\mathbf{K}$ is the discrete curvature of the metric at the first component in $\mathbf{A}^{-1}([d],\exp(\mathbf{u}))$. 
    
    Moreover, define the \emph{Ricci potential} as
    \[
        \mathcal{E}_d=\mathcal{E}_d(\mathbf{w}) \coloneqq \int ^\mathbf{w} \sum_{i=1}^n K_i\,du_i.
    \]
\end{defn}

Considering that $\mathbf{A}$ is $C^1$ and the map from edge length to curvature is real analytic by cosine law, we know $\kappa_d$ is $C^1$ as a restriction of the composition of these two maps. Note that the domain of $\mathbf{K}$ here is expanded than the one in Theorem \ref{diff_all}. To show that the Ricci potential is well-defined, let
\[
    U_i\coloneqq \left\{\, \mathbf{u}\in\mathbb{R}^V \mid
    \tilde\Omega_{\mathcal{T}_i} \left( [d],\exp(\mathbf{u}) \right)
        \subset D_f(\mathcal{T}_i) \,\right\}.
\]
Zhu proved that the number of the set like this finite in \cite{zhu2019discrete}. Denote them by $U_1,\dots,U_M$, then
\begin{equation}\label{rn_cell}
    \mathbb{R}^V=\bigcup_{j=1}^M U_j.
\end{equation}

By Formula \eqref{ineq_f} and \eqref{ineq_f2}, every $U_i$ is real analytic homeomorphic to an $n$-dimensional convex polytope, and some of them intersect to get a low dimension one. Then we have
\begin{prop}
    Formula \ref{rn_cell} form a finite CW decomposition of $\mathbb{R}^V$.
\end{prop}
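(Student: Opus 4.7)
The plan is to pull back the ambient cell decomposition of $Teich(\Sigma) \times \mathbb{R}^V_{>0}$ supplied by Proposition \ref{cell} along the fiber parametrization $\mathbf{u} \mapsto ([d], \exp \mathbf{u})$, and to verify that each resulting piece of $\mathbb{R}^V$ is a convex polytopal cell. First I would observe that for each triangulation $\mathcal{T}_j$ the inversive distances $\mathbf{I}^{(j)} = \cosh \Omega_{\mathcal{T}_j}^{-1}([d])$ are uniquely determined by $[d]$, so
\[
    U_j = \{\,\mathbf{u} \in \mathbb{R}^V : (\mathbf{I}^{(j)}, e^{\mathbf{u}}) \in D_f(\mathcal{T}_j)\,\},
\]
a closed subset of $\mathbb{R}^V$ cut out by the finitely many local weighted Delaunay inequalities \eqref{ineq_f} with the $\mathbf{I}$-variables frozen. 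Applying the Schlenker--Dimitrov coordinate change from $(\mathbf{I},\mathbf{r})$ to $(\mathbf{K},\boldsymbol\theta)$ invoked in Proposition \ref{cell}, in which these inequalities become the linear local Gauss--Bonnet conditions, each $U_j$ is real analytically homeomorphic to a convex polytope of dimension $n=|V|$.

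Next I would check that the pieces fit together as a CW-type decomposition. The cover $\mathbb{R}^V = \bigcup_j U_j$ is already given by \eqref{rn_cell}, its finiteness is quoted from \cite{zhu2019discrete} just above, and whenever $U_{j_1} \cap U_{j_2} \ne \varnothing$ the intersection is precisely the locus on which both $\mathcal{T}_{j_1}$ and $\mathcal{T}_{j_2}$ are simultaneously weighted Delaunay; under the Schlenker--Dimitrov polytope structure this is a shared face of both polytopes, namely the codimension-one locus where the Delaunay inequality for the flipped edge becomes an equality. Taking intersections of the $U_j$ of all codimensions then defines the lower-dimensional cells, whose incidence relations pull back from those of the ambient decomposition in Proposition \ref{cell}. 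Finiteness of the cover immediately yields closure finiteness and identifies the weak topology with the subspace topology on $\mathbb{R}^V$.

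The main technical point will be verifying that the polytope face-structures induced by distinct triangulations $\mathcal{T}_{j_1}, \mathcal{T}_{j_2}$ on a common overlap actually coincide as subsets of $\mathbb{R}^V$. I would handle this by invoking the generalized Ptolemy equation \eqref{f} from Theorem \ref{teich home formula} together with the $C^1$ matching of Theorem \ref{c1diff} and the gluing in Lemma \ref{intersection}: these ensure that when an edge is flipped, the new inversive distance $f$ satisfies \eqref{f}, and consequently the boundary defining equation at the flipped edge, read from $\mathcal{T}_{j_1}$, cuts out the same analytic hypersurface in $\mathbb{R}^V$ as the corresponding equation read from $\mathcal{T}_{j_2}$. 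A secondary subtlety to note is that each $U_j$ is scale-invariant, since \eqref{ineq_f} is homogeneous of degree $-1$ in the radii, so the cells are unbounded along the diagonal direction $\mathbb{R}\cdot(1,\dots,1)$; accordingly, ``finite CW decomposition'' here is interpreted in the polyhedral-complex sense allowing unbounded convex cells, which is the content intended in the proposition.
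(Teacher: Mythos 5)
Your overall architecture matches the paper's (very terse) justification: the paper disposes of this proposition in one sentence, asserting that by Formulas \eqref{ineq_f} and \eqref{ineq_f2} each $U_i$ is real analytically homeomorphic to an $n$-dimensional convex polytope, with lower-dimensional cells arising from intersections and finiteness quoted from \cite{zhu2019discrete}. However, the step where you establish convexity of each $U_j$ has a genuine gap. You propose to apply the Schlenker--Dimitrov change of coordinates $(\mathbf{I},\mathbf{r})\mapsto(\mathbf{K},\boldsymbol\theta)$ from Proposition \ref{cell} and conclude that $U_j$ becomes a polytope cut out by linear local Gauss--Bonnet conditions. That linearization is a statement about the \emph{full} cell $D_f(\mathcal{T}_j)$ inside $Teich(\Sigma)\times\mathbb{R}^V_{>0}$; here you are working on the fiber $\{\mathbf{I}=\mathbf{I}^{(j)}\}$, i.e. the slice $\mathbf{u}\mapsto(\mathbf{I}^{(j)},e^{\mathbf{u}})$. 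Both $\mathbf{K}$ and $\boldsymbol\theta$ depend nonlinearly on $(\mathbf{I},\mathbf{r})$, so the image of this fiber in the $(\mathbf{K},\boldsymbol\theta)$ coordinates is an $n$-dimensional submanifold that is not affine in general; the intersection of the Schlenker--Dimitrov polytope with a non-affine slice need not be convex, or even a cell. So as written this step does not deliver the polytope structure on $U_j$.

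The fix is more elementary and is what the paper's citation of \eqref{ineq_f} is pointing at: once $\mathbf{I}$ is frozen, all the discriminants $\Delta_{bce},\Delta_{ade},\Delta_{cdf},\Delta_{abf}$ in \eqref{ineq_f} are positive constants (with $f$ determined by \eqref{f}), so each local weighted Delaunay inequality has the form $A/p+B/r\le C/q+D/s$, which is \emph{linear and homogeneous in the reciprocal radii}. Hence under the real analytic homeomorphism $\mathbf{u}\mapsto(e^{-u_i})_{i\in V}$ of $\mathbb{R}^V$ onto $\mathbb{R}^V_{>0}$, each $U_j$ is identified with the intersection of $\mathbb{R}^V_{>0}$ with a finite collection of closed linear half-spaces, i.e. a convex polyhedral cone; this also immediately gives your scale-invariance observation and makes the shared faces between adjacent $U_{j_1},U_{j_2}$ genuine polyhedral faces (the locus where one inequality is an equality), with consistency across triangulations supplied, as you say, by \eqref{f} and Lemma \ref{intersection}. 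With that substitution your argument goes through; the remaining ingredients (finiteness from \cite{zhu2019discrete}, closure finiteness from finiteness of the cover) are fine.
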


From Theorem \ref{diff_all}, we have $\frac{\partial K_i}{\partial u_j}=\frac{\partial K_j}{\partial u_i}$ holds on each $U_i$, and since $K_i$ is $C^1$ continuous, it follows that $\frac{\partial K_i}{\partial u_j}=\frac{\partial K_j}{\partial u_i}$ holds on the entire simply connected $\mathbb{R}^V$. Therefore, the differential form $\sum_{i=1}^n K_i du_i$ is a closed form and hence an exact form, and its integral is independent of the choice of path. Thus, $\mathcal{E}_d$ is well-defined on $\mathbb{R}^V$. The reason for not including an integration starting point is because the Ricci energy could differ by a constant.

\begin{prop}\label{convex}
    $\mathcal{E}_d$ is a $C^2$ function on $\mathbb{R}^V$, and it is a strictly convex function restricted on the hyperplane
    \[U \coloneqq \left\{\, \mathbf{u} \in \mathbb{R}^V 
    \mid u_1+\dots+u_n=0 \,\right\}.\]
\end{prop}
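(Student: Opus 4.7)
The plan is to read the regularity and convexity directly off $\nabla \mathcal{E}_d = \mathbf{K} = \kappa_d$. For $C^2$-regularity, I observe that $\kappa_d$ factors as $\mathbf{A}^{-1}$ followed by the real-analytic cosine-law map sending edge lengths to discrete curvatures. Since $\mathbf{A}$ is a $C^1$ diffeomorphism by Theorem \ref{c1diff}, we get $\kappa_d \in C^1(\mathbb{R}^V)$, and hence $\mathcal{E}_d$, as a primitive of the $C^1$ closed form $\sum_i K_i\,du_i$ already shown to exist above, is $C^2$ on $\mathbb{R}^V$.

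Next I analyze the Hessian $H(\mathbf{u}) \coloneqq \partial \kappa_d / \partial \mathbf{u}$. On the interior $\interior(U_j)$ of each cell, where a single weighted Delaunay triangulation is in force, Theorem \ref{diff_all} supplies the graph-Laplacian expression $H_{ii} = \sum_{e_{ik} \in E} l^*_{ik}/l_{ik}$ and $H_{ij} = -l^*_{ij}/l_{ij}$ for $e_{ij} \in E$. Regrouping by edges rewrites its quadratic form as
\[
    \mathbf{v}^\top H(\mathbf{u}) \mathbf{v} = \sum_{\{i,j\} \in E} \frac{l^*_{ij}}{l_{ij}}\bigl(v_i - v_j\bigr)^2,
\]
which is non-negative and vanishes only when $\mathbf{v}$ is constant, because $l^*_{ij} > 0$ throughout the interior of each cell. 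Thus $H$ is positive semi-definite with kernel exactly $\mathrm{span}\{(1,\dots,1)\}$ on the open dense set $\bigcup_j \interior(U_j)$, and by continuity of $H$ (which is built into $C^2$-regularity) it remains positive semi-definite on all of $\mathbb{R}^V$.

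To conclude strict convexity on $U = \{\mathbf{u} : u_1 + \dots + u_n = 0\}$, note that $(1,\dots,1) \notin U$, so the kernel direction of $H$ is transverse to $U$ throughout the cell interiors. Given any affine line $\mathbf{u}(t) = \mathbf{u}_0 + t\mathbf{v}$ with $\mathbf{v} \in U \setminus \{0\}$, set $\phi(t) = \mathcal{E}_d(\mathbf{u}(t))$; then $\phi''(t) = \mathbf{v}^\top H(\mathbf{u}(t)) \mathbf{v}$ is continuous, non-negative, and strictly positive whenever $\mathbf{u}(t)$ lies in some $\interior(U_j)$, which constitutes an open dense subset of parameters. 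Hence $\int_a^b \phi''(t)\,dt > 0$ for every $a < b$, so $\phi'$ is strictly increasing and $\phi$ is strictly convex, proving that $\mathcal{E}_d|_U$ is strictly convex. The step I expect to be delicate is the behaviour of $H$ on the cell boundaries, where the weighted Delaunay condition is saturated, some dual lengths $l^*_{ij}$ may degenerate to $0$, and $H$ could a priori acquire a larger kernel; the density argument above sidesteps any cell-boundary case analysis by integrating $\phi''$ along lines in $U$ and using that the union of cell interiors has full measure.
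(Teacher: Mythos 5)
Your overall route is the same as the paper's: $\mathcal{E}_d$ is $C^2$ because $\nabla\mathcal{E}_d=\kappa_d$ is $C^1$ (via Theorem \ref{c1diff}), and convexity is read off the graph-Laplacian structure of $\partial\mathbf{K}/\partial\mathbf{u}$ supplied by Theorem \ref{diff_all}. The difference is how the cell boundaries are handled, and this is where your argument has a genuine gap. You reduce strict convexity on $U$ to the claim that, for an arbitrary line $\mathbf{u}_0+t\mathbf{v}$ with $\mathbf{v}\in U\setminus\{0\}$, the set of parameters $t$ with $\mathbf{u}_0+t\mathbf{v}\in\bigcup_j\interior(U_j)$ is dense in $\mathbb{R}$. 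That does not follow from $\bigcup_j\interior(U_j)$ being open and dense (or of full measure) in $\mathbb{R}^V$: a closed nowhere dense set --- here the union of the cell boundaries, which are codimension-one real-analytic hypersurfaces --- can perfectly well contain a whole line segment, and nothing you have said rules out a segment of your chosen line lying entirely inside such a boundary. On such a segment $\phi''$ is not known to be positive anywhere, so the inequality $\int_a^b\phi''\,dt>0$ is unjustified and your argument does not yet yield strict convexity in that direction.

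The paper closes exactly this hole the other way around: Theorem \ref{diff_all} is invoked for an arbitrary point together with its weighted Delaunay triangulation, i.e., the one-dimensional null space is asserted on each \emph{closed} cell $U_j$, boundary included; since the finitely many $U_j$ cover $\mathbb{R}^V$, the Hessian has null space exactly $\mathrm{span}\{(1,\dots,1)\}$ at every point, is therefore positive definite on $U$ pointwise, and strict convexity is immediate with no density argument. If you want to keep your quadratic-form computation self-contained, the substantive fact you need at a boundary point is that the subgraph of edges with $l^*_{ij}>0$ remains connected even though some dual lengths vanish: those edges are precisely the edges of the weighted Delaunay tessellation, the dual of the weighted Voronoi CW decomposition of the connected surface $S$ (Theorem \ref{vor_f} and Definition \ref{del_def}), and the $1$-skeleton of the dual of a CW decomposition of a connected surface is connected. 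With that observation the kernel is one-dimensional everywhere and your line-integration step becomes unnecessary.
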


\begin{proof}
    We know that $\nabla \mathcal{E}_d(\mathbf{u})=\kappa_d(\mathbf{u})$ is $C^1$ by definition, then $\mathcal{E}_d$ is $C^2$. The matrix $\hess \mathcal{E}_d$ is continuous on $\mathbb{R}^V$.

    From Theorem \ref{diff_all}, we know that $\hess \mathcal{E}_d$ is symmetric positive semi-definite on each $U_1,\dots,U_m$ and has only one-dimensional null space with the eigenvector $(1,\dots,1)$. Moreover, since it is globally continuous on $\mathbb{R}^V$, it is also symmetric positive semi-definite on the entire space $\mathbb{R}^V$ and has only one-dimensional null space with the eigenvector $(1,\dots,1)$. Therefore, it is strictly positive definite on the quotient space $U$, and hence $\mathcal{E}_d$ is strictly convex.
\end{proof}

Now we prove the main theorem of this paper.

\begin{proof}[Proof of Theorem \ref{main_f}]  
    By Proposition \ref{convex}, we know $\kappa_d|_U$ is injective from variational principle. Denote that
    \[
        K=\left\{\, \mathbf{K}=(K_1,\dots,K_n) \in (-\infty,2\pi)^V \mid \sum_{i=1}^n K_i=2\pi \chi(S) \,\right\}.
    \]

    Apparently, $K$ a bounded open set on the $(n-1)$-dimensional hyperplane, and the points on its boundary must have at least one component equal to $2\pi$.
    By the construction of $\kappa_d$ we know it is a continuous map and $\image \kappa_d \subset K$. By Brouwer's invariance of domain theorem, the map $\kappa_d|_U \colon U \to K$ is a continuous injective map between real $n-1$ dimensional topological manifolds, thus it is an open map. 

    We aim to prove that for any infinite sequence $\left\{\mathbf{u}^{(m)}\right\} \subset U$ satisfying
    \begin{equation}\label{infty}
        \lim_{k \to \infty} \mathbf{u}^{(m)}=\mathbf{u}^\infty
        \in [-\infty,+\infty]^n \setminus U,
    \end{equation}
    there exists a subsequence $\left\{\mathbf{u}^{(m_i)}\right\}$ such that
    \[
        \lim_{i \to \infty}\kappa_d(\mathbf{u}^{(m_i)}) \in \partial K
    \]

    From equation \eqref{rn_cell}, we observe that $U=\bigcup_{j=1}^M U_j\cap U$ is a finite cell decomposition. By the pigeonhole principle, there exists some cell $U_j \subset \tilde \Omega_\mathcal{T}(D_f(\mathcal{T}))$ that contains infinitely many elements from $\left\{\mathbf{u}^{(m)}\right\}$. Without loss of generality, we selected them as a subsequence, but still use the notation $\left\{\mathbf{u}^{(m)}\right\}$ for concise.
    Then we discuss it on the triangulation $\mathcal{T}=\mathcal{T}_j$.

    Consider $\mathbf{u}^\infty=(u^\infty_1,\dots,u^\infty_n)$, where there are several components that approach $-\infty$. Let $V_{good} \subsetneq V$ be the non-empty set of vertices corresponding to these components, which are denoted as \emph{good vertices}. The remaining vertices that are not good are denoted as \emph{bad vertices}. Note that the limit of the components of bad vertices may be a finite number or $+\infty$ as well. It is guaranteed by equation \eqref{infty} that $V_{good}$ is not empty and not $V$, since points on the infinite point of the hyperplane $U$ must have components diverge to $+\infty$ or $-\infty$.

    Firstly, we observe that a triangle $f_{ijk}$ in $\mathcal{T}$ cannot contain exactly one edge with two vertices being bad (note that $v_i$, $v_j$, and $v_k$ may coincide). 
    Otherwise, without loss of generality, assume that $v_i$ is a good while $v_j$ and $v_k$ are bad. During the convergence of $\{\mathbf{u}^{(m)}\}$, since $r_j^{(m)},r_k^{(m)} \ge \epsilon>0$, when $m$ is sufficiently large, we have $l_{ij}^{(m)}=\sqrt{(r_i^{(m)})^2+(r_j^{(m)})^2+2I_{ij}^{(m)}r_i^{(m)}r_j^{(m)}} \to r_j^\infty$ and $l_{ik}^{(m)} \to r_k^\infty$. Moreover, $l_{jk}^{(m)} \to r_j^\infty+r_k^\infty+\delta$ with $\delta>0$. Thus, when $m$ is sufficiently large, the face $f_{ijk}$ breaks the triangle inequality $l_{ij}^{(m)}+l_{ik}^{(m)}>l_{jk}^{(m)}$, which contradicts $\left\{\mathbf{u}^{(m)}\right\} \subset \tilde \Omega_\mathcal{T}(D_f(\mathcal{T})) \subset \tilde \Omega_\mathcal{T}(Q_f(\mathcal{T}))$.
    
    Secondly, Since $S$ is connected, we can select an edge that connects a good vertex to a bad vertex. All the neighborhood of the bad vertex must be good, otherwise, we could find a neighboring triangle that contradicts the previous argument.

    Finally, by the formula of the inverse distance circle packing, the curvature at this bad vertex converges to $2\pi$ as $k \to \infty$. Namely, $\kappa_d(\mathbf{u}^{(m)})$ converges to the boundary of $K$ up to the subspace topology.
    
    Therefore, we have proved that $\image \kappa_d=K$.
    Otherwise, suppose $\mathbf{K}_0 \in K \setminus\image \kappa_d$, since $\image \kappa_d$ is not empty, we can connect a path $\gamma \colon I \to K$ from the interior of $\image \kappa_d$ to $\mathbf{K}_0$. Because $\image \kappa_d$ is open and simply connected, there exist $s \in I$ such that for any sufficiently large $m$, we have $\gamma(s-1/m) \in \image \kappa_d$ and $\gamma([s,1])\in K\setminus\image\kappa_d$. Let $\mathbf{u}^{(m)}=\kappa_d^{-1}(\gamma(s-1/m))$, then $\mathbf{u}^{(m)}$ converge to the boundary of $U$, however, of which any subsequence can not converge to the boundary of $K$, which is a contradiction. Thus, we get that $\kappa_d$ is a bijection.

    The condition of the theorem requires that $\bar{\mathbf K} \in K$, so there exist a unique preimage $u=\kappa_d|_U^{-1}(\bar{\mathbf K})$. Therefore, for any $\forall t \in \mathbb{R}$, the piecewise flat surface with radii as weights $([d_f],\mathbf{r})$ is discrete conformal equivalent to $\mathbf{A}^{-1}([d],\mathbf{u}+(t,\dots,t))$, where $d$ satisfies $([d],\mathbf{r})=\mathbf{A}([d_f],\mathbf{r})$. The coordinate $\mathbf{u}+(t,\dots,t)$ the piecewise flat surface is up to a scaling. This is the end of the proof.

\end{proof}

\subsection{Discrete Ricci flow}

The \emph{discrete Ricci flow} of $2$-dimensional polyhedral surface with inversive distance circle packing is defined by
\begin{equation}\label{ricciflow}
    \frac{dr_i}{dt}=-(K_i-\bar K)r_i
\end{equation}
By Theorem \ref{main_f}, this flow can be extended.

\begin{thm}\label{ricci_flow}
    For any initial $\mathbf{r}(0)=\mathbf{r}_0 \in \mathbb{R}_{>0}$, the ODE \eqref{ricciflow} have the solution $\mathbf{r}(t)$ existing on $\left[0,+\infty\right)$, whose limitation $\mathbf{r}^\infty \coloneqq \lim\limits_{t \to +\infty}\mathbf{r}(t)$ satisfies that on every vertex $v_i$, the discrete curvature of $A^{-1}([d],\mathbf{r}^\infty)$ is $\bar K$
\end{thm}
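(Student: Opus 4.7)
The plan is to recast the ODE \eqref{ricciflow} as a negative gradient flow and exploit the convexity of the Ricci potential together with Theorem \ref{main_f}. Setting $u_i=\log r_i$ and writing $\bar K_i$ for the components of the target curvature vector, the equation becomes $\dot u_i = -(K_i-\bar K_i)$, which by Definition \ref{kappa} is the $C^1$ negative gradient flow
\begin{equation*}
    \dot{\mathbf u}=-\nabla\tilde{\mathcal E}_d(\mathbf u),\qquad
    \tilde{\mathcal E}_d(\mathbf u)\coloneqq \mathcal E_d(\mathbf u)-\sum_{i=1}^n \bar K_i u_i,
\end{equation*}
where $d$ is the hyperbolic metric determined by $([d],\mathbf r(0))=\mathbf A([d_f(0)],\mathbf r(0))$. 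Since $\sum_i K_i = 2\pi\chi(S) = \sum_i \bar K_i$ by Gauss--Bonnet, the quantity $\sum_i u_i$ is preserved along the flow, confining the trajectory to the affine hyperplane $U_c\coloneqq\{\mathbf u : \sum_i u_i = c\}$ with $c=\sum_i\log r_i(0)$. Short-time existence and uniqueness follow because Theorem \ref{c1diff} makes the right-hand side $C^1$; Proposition \ref{convex} gives that $\tilde{\mathcal E}_d|_{U_c}$ is $C^2$ and strictly convex (the added term is linear), and Theorem \ref{main_f} provides its unique critical point $\mathbf u^\star\in U_c$ with $\kappa_d(\mathbf u^\star)=\bar{\mathbf K}$.

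The main obstacle is ruling out blow-up, and I would reduce it to coercivity of $\tilde{\mathcal E}_d|_{U_c}$. For this I reuse the boundary-behaviour argument from the proof of Theorem \ref{main_f}: any sequence $\mathbf u^{(m)}\in U_c$ escaping to infinity, after passing to a subsequence lying in one of the finitely many cells $U_j$, forces the curvature $\kappa_d(\mathbf u^{(m)})$ to approach $\partial K$ with some $K_{i_m}\to 2\pi$. Combined with the elementary convexity estimate
\begin{equation*}
    \tilde{\mathcal E}_d(\mathbf u^\star+R'v)\ge \tilde{\mathcal E}_d(\mathbf u^\star)+\frac{R'}{R}\bigl(\tilde{\mathcal E}_d(\mathbf u^\star+Rv)-\tilde{\mathcal E}_d(\mathbf u^\star)\bigr),\quad R'>R>0,
\end{equation*}
valid for every nonzero $v$ tangent to $U_c$, and with strict convexity (which makes the inner factor strictly positive for $R>0$), this propagates coercivity from one sphere around $\mathbf u^\star$ to all larger ones. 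Hence every sub-level set of $\tilde{\mathcal E}_d|_{U_c}$ is compact.

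Given coercivity, the standard gradient-flow package closes the argument. Along the flow $\tfrac{d}{dt}\tilde{\mathcal E}_d(\mathbf u(t))=-\|\nabla\tilde{\mathcal E}_d(\mathbf u(t))\|^2\le 0$, so the trajectory is trapped in the compact sub-level set $\{\tilde{\mathcal E}_d\le\tilde{\mathcal E}_d(\mathbf u(0))\}\subset U_c$, and therefore extends to $[0,+\infty)$. The $\omega$-limit set is non-empty by compactness and flow-invariant; on it $\tilde{\mathcal E}_d$ is constant, so $\nabla\tilde{\mathcal E}_d\equiv 0$, and the uniqueness of the critical point forces $\omega(\mathbf u(0))=\{\mathbf u^\star\}$. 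Exponentiating yields $\mathbf r(t)\to\mathbf r^\infty = \exp(\mathbf u^\star)$, whose associated metric realizes $\bar{\mathbf K}$ as desired.
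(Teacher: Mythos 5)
Your proposal follows essentially the same route as the paper: rewrite the flow in $\mathbf{u}=\log\mathbf{r}$ as the negative gradient flow of the normalized Ricci potential, use Proposition \ref{convex} for strict convexity on the hyperplane and Theorem \ref{main_f} for the existence and uniqueness of the critical point, and conclude that the energy decreases to its minimum. The only real difference is that you supply the coercivity estimate and the $\omega$-limit-set argument that justify long-time existence and convergence of the trajectory, details the paper's proof compresses into the single remark that the potential is bounded below and the solution therefore extends to $[0,+\infty)$.
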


\begin{proof}
    Let $\mathbf{u}(t)\coloneqq \log\left(\mathbf{r}(t)\right)$, then ODE \eqref{ricciflow} is
    \[
        \frac{d\mathbf{u}(t)}{dt}=-(\mathbf{K}-\bar{\mathbf{K}})
    \]

    Define the \emph{normalized} Ricci potential as
    \[
        \mathcal{E}_d(\mathbf{w})=\int ^\mathbf{w} \sum_{i=1}^n (K_i-K_{avg})\,du_i.
    \]
    Take the derivative with respect to $t$.
    \[
        \begin{aligned}
            \frac{d\mathcal{E}_d(\mathbf{u}(t))}{dt}= & 
            \frac{\partial\mathcal{E}_d}{\partial u_1}\frac{du_1}{dt}+\dots+
            \frac{\partial\mathcal{E}_d}{\partial u_n}\frac{du_n}{dt} \\
            = & -\left(\frac{\partial\mathcal{E}_d}{\partial u_1}(K_1-\bar K_1)+\dots+
            \frac{\partial\mathcal{E}_d}{\partial u_n}(K_n-\bar K_n)\right)\\
            = & -((K_1-\bar K_1)^2+\dots+(K_n-\bar K_n)^2) \le 0
        \end{aligned}
    \]

    Then $\mathcal{E}_d(\mathbf{u}(t))$ decrease and strictly decrease when $\mathbf{u}(t)) \in U$ and $\mathbf{K}(t) \ne \bar{\mathbf{K}}$.

    By Theorem \ref{main_f}, there exist $\mathbf{u}^\infty \in U$ such that $\kappa_d(\mathbf{u}^\infty)=\mathbf{K}_{avg}$, and $\nabla\mathcal{E}_d(\mathbf{u}^\infty)=0$. Thus, $\mathbf{u}^\infty$ is the unique critical point in $U$, and the unique minimum point by convexity.
    Therefore, $\mathcal{E}_d(\mathbf{u}(t))$ is bounded from below, and the solution can be extended infinitely. When $t \to \infty$, we have $\frac{d\mathcal{E}_d(\mathbf{u}(t))}{dt} \to 0$ and $\mathbf{K}(+\infty) = \bar{\mathbf{K}}$.
\end{proof}

\appendix

\section{Hyperbolic triangle law}

See figure \ref{fig:hyp6}. Denote the length of a hyperbolic right angle hexagon by $x,c,y,a,z,b$ in order, and the hyperbolic cosine law \cite{mondello2009triangulated} is
\begin{equation}\label{cosh}
    \cosh a= \frac{\cosh y \cosh z + \cosh x}{\sinh y \sinh z}.
\end{equation}

\begin{figure}[ht]
    \centering
    \includegraphics{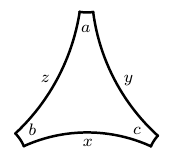}
    \caption{A hyperbolic right angle hexagon}
    \label{fig:hyp6}
\end{figure}

\section{Codes}

The codes for lemma \ref{intersection} and lemma \ref{dfdF} is below. The software is Mathematica 8.0.4.0.
\begin{verbatim}
X=Expand[
u^4w^2+u^2w^4+v^4x^2+v^2x^4+y^4z^2+y^2z^4
+u^2x^2y^2+v^2w^2y^2+u^2v^2z^2+w^2x^2z^2
-u^2v^2w^2-u^2v^2x^2-u^2w^2x^2-v^2w^2x^2
-w^2y^2z^2-x^2y^2z^2-v^2x^2y^2-u^2w^2y^2
-u^2w^2z^2-v^2x^2z^2-u^2y^2z^2-v^2y^2z^2
/.{u->Sqrt[p^2+2a p q+q^2],v->Sqrt[q^2+2b q r+r^2],
w->Sqrt[r^2+2c r s+s^2],x->Sqrt[s^2+2d s p+p^2],
y->Sqrt[q^2+2e q s+s^2],z->Sqrt[p^2+2F p r+r^2]}];
Y=a^2+b^2+c^2+d^2+e^2+f^2-a^2c^2-b^2d^2-e^2f^2-1+
2(a d e+b c e+a b f+c d f+a b c d+a c e f+b d e f);
p0=q r s Factor[Sqrt[b^2+c^2+e^2+2b c e-1]/(
((d+a e)Sqrt[b^2+c^2+e^2+2b c e-1]+
(c+b e)Sqrt[a^2+d^2+e^2+2a d e-1])/(e^2-1)q r
-Sqrt[a^2+d^2+e^2+2a d e-1]q s
+((a+d e)Sqrt[b^2+c^2+e^2+2b c e-1]+
(b+c e)Sqrt[a^2+d^2+e^2+2a d e-1])/(e^2-1)r s)];
p1=(p/.Solve[D[X,p]==0,p][[1]]);
temp=Factor[p0/(p1/.
F->(a b+c d+a c e+b d e+Sqrt[b^2+c^2+e^2+2b c e-1]
Sqrt[a^2+d^2+e^2+2a d e-1])/(-1 + e^2))];
Expand[Numerator[temp]]-Expand[Denominator[temp]]
Factor[D[X,q]/.p->p1/.F->f]
Factor[D[Y,a]/D[Y,f]-D[X,a]/D[X,F]/.p->p1/.F->f]
Factor[D[Y,e]/D[Y,f]-D[X,e]/D[X,F]/.p->p1/.F->f]
\end{verbatim}

The output is $0$ and three fractions with factor $Y(a,b,c,d,e,f)$ in their numerators.

\bibliographystyle{alpha}
\bibliography{ref}

\end{document}